\newtheorem{theorem}{Theorem}[section]
\newtheorem{lemma}[theorem]{Lemma}
\newtheorem{proposition}[theorem]{Proposition}
\newtheorem*{theorem*}{Theorem}
\newtheorem*{propertyA}{Property A}
\newtheorem*{corollaryA}{Corollary A}
\newtheorem*{corollaryB}{Corollary B}
\theoremstyle{definition}
\theoremstyle{remark}
\newtheorem*{remark}{Remark}
\begin{document}

\title[\ ]{The collapsibility of some CAT(0) simplicial complexes of dimension $3$}

\author[\ ]{
Ioana-Claudia Laz\u{a}r\\
'Politehnica' University of Timi\c{s}oara, Dept. of Mathematics,\\
Victoriei Square $2$, $300006$-Timi\c{s}oara, Romania\\
E-mail address: ioana.lazar@upt.ro}

\date{}

\begin{abstract}
We study the collapsibility of finite simplicial complexes of dimension $3$ endowed with a CAT(0)
metric. Our main result states that, under an additional hypothesis, finite simplicial $3$-complexes
endowed with a CAT(0) metric collapse
to a point through CAT(0) subspaces.

\hspace{0 mm} \textbf{2010 Mathematics Subject Classification}:
05C99, 05C75.

\hspace{0 mm} \textbf{Keywords}: simplicial $3$-complex, CAT(0) metric, strongly convex metric,
elementary collapse, geodesic segment. \\

\end{abstract}

\pagestyle{myheadings}

\markboth{}{}

  \vspace{-10pt}

\maketitle

\section{Introduction}

In this paper we find a sufficient condition for the collapsibility of a particular class of finite
simplicial complexes of dimension $3$. Namely, we show that the existence of a CAT(0) metric guarantees the collapsibility
of those complexes which satisfy a so called Property A. Roughly, Property A refers to preserving the strongly convex metric on a
subcomplex obtained by performing an elementary collapse on a finite CAT(0) $3$-complex.
Property A imposes restrictions only when deleting a $3$-simplex by starting at its free face. A similar restriction is not
encountered when deleting a $2$-simplex by starting at its free face.

The collapsibility of finite simplicial complexes was studied before. In \cite{white_1967} it is
shown that finite, strongly convex simplicial complexes of dimension $2$ are collapsible, whereas
in dimension $3$ such complexes collapse to a $2$-dimensional spine. It is the paper's object to
show that in dimension $3$ a stronger metric condition given by the CAT(0) metric, ensures, under additional assumptions,
collapsibility not only to a spine of dimension $2$, but even to a point.

Using discrete Morse theory (see \cite{forman_1998}), Crowley proved in $2008$, under a technical condition, that
nonpositively curved simplicial complexes of dimension $3$ or less endowed with the standard
piecewise Euclidean metric, collapse to a point (see \cite{crowley_2008}). She constructed a
CAT(0) triangulated disk by endowing it with the standard piecewise Euclidean metric and requiring
that each of
its interior vertices has degree at least $6$. The naturally associated standard piecewise Euclidean
metric on the disk became then CAT(0).

Adiprasito and Benedetti extended Crowley's result to all
dimensions (see \cite{benedetti_2012}, Theorem $3.2.1$). Namely,
they proved using discrete Morse theory that every complex that is
CAT(0) with a metric for which all vertex stars are convex, is
collapsible. It is important to note that, although the $3$-complexes in
our paper are also CAT(0) spaces,
they are no longer necessarily endowed with the standard piecewise Euclidean metric like the ones in
Crowley's and Adiprasito and Benedetti's papers. Still, they can also be collapsed to a point.

In \cite{baralic_2014} we show further, using again discrete Morse theory, that systolic simplicial complexes (see \cite{JS1})
are also collapsible. Moreover, we prove that both systolic and CAT(0) locally finite simplicial complexes possess an arborescent structure.
The collapsibility of systolic simplicial complexes is also proven by Chepoi and Osajda in \cite{chepoi_2009} (see Corollary $4.3$).

It is known that in dimension $2$ the CAT(0) metric guarantees the
collapsibility, through CAT(0) subspaces, of finite simplicial complexes, not necessarily endowed
with the standard piecewise Euclidean metric and whose interior vertices do not necessarily have
degree at least $6$ like the ones in
Crowley's  paper(see \cite{lazar_2010_8}, chapter $3.1$, page $35$). In this paper we
extend this result to dimension $3$. Namely, we show that, in certain circumstances, finite, CAT(0) simplicial $3$-complexes can
be collapsed to a point through subspaces which are, at each step of the retraction, endowed with a CAT(0) metric.
The result in dimension $3$ works only under an additional Property A given below.

\begin{propertyA}
Let $K$ be a finite CAT(0) simplicial $3$-complex and let $\sigma$ be a $3$-simplex of $K$ with a free $n$-face $\alpha, 1 \leq n \leq 2$. Let
$K' = K \setminus \{\sigma, \alpha \}$ be the subcomplex obtained by performing an elementary collapse on $K$. Let $p,q$ be two points of $K$ which do not belong to $\sigma$
such that the geodesic segment $[p,q]$ intersects the interior of $\sigma$.
Let $U$ be a small neighborhood of some vertex of $\sigma$ such that $\sigma$ is included in $U$. Let $U' = U \setminus \{ \sigma, \alpha \}$
Then in $U'$ there do not exist two geodesic segments $\gamma_{1}, \gamma_{2}$ of equal length joining $p$ to $q$ such that
$\gamma_{1}$ intersects one, while $\gamma_{2}$ intersects one or two of the three boundary edges of $\sigma$ which differ from any of the boundary edges of
$\alpha$ (if $\alpha$ is $2$-dimensional) or from $\alpha$ itself (if $\alpha$ is $1$-dimensional).

\end{propertyA}

Note that in general a finite CAT(0) $3$-complex can not be simplicially collapsed to a point because once performing the first elementary collapse on the complex,
the subcomplex we obtain does not inherit the strongly convex metric. This happens because the situation we exclude by imposing Property A on the complex, may in general occur.
Our proof relies on the definition of an elementary collapse. It uses basic properties of
CAT(0) spaces (see
\cite{bridson_1999}, \cite{burago_2001}, \cite{alex_1955}) and one of White's results
given in \cite{white_1967}.
Namely, because CAT(0) spaces have a strongly convex metric, finite, CAT(0)
$3$-complexes have, according to White, a $3$-simplex with a free face. One can therefore perform an
elementary collapse on such complex.
We show that the subcomplex obtained by performing an elementary collapse on a CAT(0)
$3$-complex enjoying Property A remains, at any step of the retraction, nonpositively curved. An important issue to
solve will be to find the new geodesic segments in the neighborhood of each point of the
subcomplex obtained by performing any step of the elementary collapse.

\textbf{Acknowledgements.} The author was partially supported by Project $19/6-020/961-120/14$ of Ministry of Science of the
Republic of Srpska.

\section{Preliminaries}

We present in this section the notions we shall work with and
the results we shall refer to.

Let $(X,d)$ be a metric space. Let $a,b \in \mathbf{R}$ such that
$[a,b]$ is a real interval. A \emph{geodesic path} joining $x \in
X$ to $y \in X$ is a path $c : [a,b] \to X$ such that $c(a) = x$,
$c(b) = y$ and $d(c(t), c(t')) = |t - t'|$ for all $t, t' \in
[a,b]$. The image $\alpha$ of $c$ is called a \emph{geodesic
segment} with endpoints $x$ and $y$. Since geodesic segments in
$\mathbf{R}$ are just closed intervals, this is a legitime abuse
of notation.

A \emph{geodesic metric space} $(X,d)$ is a metric space in which
every pair of points can be joined by a geodesic segment. We
denote any geodesic segment from a point $x$ to a point $y$ in
$X$, by $[x,y]$.

Given a path $c : [0,1] \rightarrow X$, its \emph{length} is defined by
\begin{center} $l(c) = \sup \{\sum_{i = 1}^{n}d(c(t_{i-1}), c(t_{i}))\}$,\end{center}
where the supremum is taken
over all possible subdivisions of $[0,1]$, $0 = t_{0} < t_{1} < ... < t_{n} = 1$.

Let $(X,d)$ be a geodesic metric space.
A \emph{geodesic triangle} in $X$ consists of three distinct points $x_{1}, x_{2}, x_{3}
\in X$, called \emph{vertices}, and a choice of three geodesic
segments joining them, called \emph{sides}.
Such a geodesic triangle is denoted by $\triangle = \triangle (x_{1}, x_{2}, x_{3})$. If a point $a
\in X$ lies
in the union of $[x_{1}, x_{2}], [x_{2}, x_{3}]$ and $[x_{3}, x_{1}]$, then we write $a \in
\triangle$. A triangle $\overline{\triangle} =
\overline{\triangle}(x_{1}, x_{2}, x_{3}) =
\triangle(\overline{x_{1}},\overline{x_{2}},\overline{x_{3}})$ in
$\mathbf{R}^{2}$ is called a \emph{comparison triangle} for
$\triangle$ if $d(x_{i}, x_{j}) =
d_{\mathbf{R}^{2}}(\overline{x_{i}},\overline{x_{j}})$, $i,j \in \{ 1,2,3 \}$. A point
$\overline{a} \in [\overline{x_{1}},\overline{x_{2}}]$ is called a
\emph{comparison point} for $a \in [x_{1}, x_{2}]$ if $d(x_{1},a) =
d_{\mathbf{R}^{2}}(\overline{x_{1}},\overline{a})$.
The interior angle of $\overline{\triangle}$ at $\overline{x_{1}}$
is called the \emph{comparison angle} between $x_{2}$ and $x_{3}$ at $x_{1}$.
A \emph{tetrahedron} in $X$
is the union of four geodesic triangles any two of which have
exactly one side in common.

Let $c,c',c''$ be three geodesic paths in $X$ issuing from the same point $x$. The
\emph{Aleksandrov angle} between
$c$ and $c'$ at $x$ is defined as
\begin{center}$\angle(c,c') = \lim \sup_{s,t \rightarrow 0}
\overline{\angle}_{x}(c(s),c'(t)) \in [0,\pi]$, $s,t \in [0,1]$,\end{center}
where $\overline{\angle}_{x}(c(s),c'(t))$ is the angle at the vertex
corresponding to $x$ in a comparison triangle in $\mathbb{R}^{2}$ for the geodesic triangle
$\triangle(x, c(s), c(t))$ in $X$.
The following inequality holds \begin{center}$\angle(c',c'') \leq \angle(c,c') + \angle(c,c'')$\end{center}
(for the proof see \cite{bridson_1999}, chapter I.$1$, page $10$). Alexandrov angles in
$\mathbf{R}^{2}$ are the usual Euclidean angles.

Let $\triangle = \triangle (p,q,r)$ be a
geodesic triangle in a convex metric space $(X,d)$ and let $\alpha, \beta, \gamma$ denote the
Alexandrov angles between the sides of $\triangle$. We define the
\emph{curvature of} $\triangle$ by $\omega(\triangle) = \alpha + \beta + \gamma - \pi$.
Any geodesic triangle in $X$ of curvature zero
is isometric to its comparison triangle in
$\mathbf{R}^{2}$ (for the proof see \cite{alex_1955}, chapter
V.$6$, page $218$).

Let $(X,d)$ be a metric space. We call $X$ a \emph{CAT(0) space} if it is a geodesic
space all of whose geodesic triangles satisfy the so called CAT(0) inequality. Namely,
for any geodesic triangle $\triangle \subset X$ and for any $x,y \in \triangle$,
\begin{center}$d(x,y) \leq d_{\mathbf{R}^{2}}(\overline{x}, \overline{y})$,\end{center}
where $\overline{x}, \overline{y} \in \overline{\triangle}$ are the corresponding comparison
points in the comparison triangle $\overline{\triangle}$ of $\triangle$ in $\mathbf{R}^{2}$.
We call $X$ \emph{nonpositively curved} if it is locally a CAT(0) space, i.e.
for every $x \in X$, there exists $r_{x} > 0$ such that the ball
$B(x, r_{x})$, endowed with the induced metric, is a CAT(0) space.

A \emph{subembedding} in
$\mathbf{R}^{2}$ of a $4-$tuple of points $(x_{1}, y_{1}, x_{2}, y_{2})$
in $X$ is a $4-$tuple of points $(\overline{x_{1}}, \overline{y_{1}},
\overline{x_{2}}, \overline{y_{2}})$ in $\mathbf{R}^{2}$ such that
$d_{\mathbf{R}^{2}}(\overline{x_{i}}, \overline{y_{j}}) = d(x_{i}, y_{j})$, $i,j \in \{1,2\}$,
$d(x_{1}, x_{2}) \leq d_{\mathbf{R}^{2}}(\overline{x_{1}}, \overline{x_{2}})$ and $d(y_{1},
y_{2}) \leq d_{\mathbf{R}^{2}}(\overline{y_{1}}, \overline{y_{2}})$.
We say $X$ \textit{satisfies the CAT(0)} $4-$\textit{point condition} if every $4$-tuple of points
$(x_{1}, y_{1}, x_{2}, y_{2})$ in $X$ has a subembedding in $\mathbf{R}^{2}$.

A metric space is a CAT(0) space if
and only if it is a geodesic space and if, for each of its geodesic triangles $\triangle$, the
Aleksandrov angle at any vertex of $\triangle$ is not greater than the corresponding angle in its
comparison triangle $\overline{\triangle}$ in $\mathbf{R}^{2}$ (for the proof see
\cite{bridson_1999}, chapter II.$1$, page $161$). Any complete, CAT(0) space satisfies the CAT(0)
$4$-point condition (for the proof see \cite{bridson_1999}, chapter II.$1$, page $164$). Any
complete,
simply connected, nonpositively curved space is a CAT(0) space (for
the proof see \cite{bridson_1999}, chapter II.$4$, page $194$).

Let $(X,d)$ be a CAT(0) space. The distance function $d : X \times X \rightarrow \mathbf{R}$ is
convex (for the proof see \cite{bridson_1999}, chapter II.$2$, page $176$) and strongly convex
(for
the proof see \cite{bridson_1999}, chapter II.$2$, page $160$). Any CAT(0) space is
contractible and hence simply connected (for the proof see \cite{bridson_1999}, chapter II.$2$,
page $161$).
The balls in $X$ are convex spaces (for the proof see \cite{bridson_1999},
chapter II.$1$, page $160$). For every $\varepsilon > 0$ there
exists $\delta = \delta(\varepsilon) > 0$
such that if $m$ is the midpoint of a geodesic segment $[x,y]
\subset X$ and if $\max \{ d(x,m'), d(y,m') \} \leq\frac{\textstyle 1}{\textstyle
2}d(x,y) + \delta$, then $d(m,m') < \varepsilon$ (for the proof see \cite{bridson_1999},
chapter II.$1$, page $160$). For $p,x,y \in X$, the geodesic segment $[x,y]$ is the union of the
geodesic segments
$[x,p]$ and $[p,y]$ if and only if $\angle_{p}(x,y) = \pi$ (see \cite{bridson_1999},
chapter II.$1$, page $163$).

We will make frequent use of Aleksandrov's Lemma
given below (for the proof see \cite{bridson_1999}, chapter I.$2$, page $25$).

\begin{lemma}\label{1.1.25}
Let $a,b,c,d$ be points in $\mathbb{R}^{2}$ such that $a$ and $c$ are in
different half-planes with respect to the line $bd$.
Consider a triangle $\triangle(a',b',c')$ in $\mathbb{R}^{2}$ such that $d_{\mathbf{R}^{2}}(a,b) =
d_{\mathbf{R}^{2}}(a',b')$, $d_{\mathbf{R}^{2}}(b,c) = d_{\mathbf{R}^{2}}(b',c')$,
$d_{\mathbf{R}^{2}}(a,d) +
d_{\mathbf{R}^{2}}(d,c) = d_{\mathbf{R}^{2}}(a',c')$ and let $d'$ be a point on the segment
$[a',c']$ such that $d_{\mathbf{R}^{2}}(a,d) = d_{\mathbf{R}^{2}}(a',d')$.

Then $\angle_{d}(a,b) + \angle_{d}(b,c) < \pi$ if and only if $d_{\mathbf{R}^{2}}(b',d') <
d_{\mathbf{R}^{2}}(b,d)$. In this case, one also has $\angle_{a'}(b',d') < \angle_{a}(b,d)$ and
$\angle_{c'}(b',d') < \angle_{c}(b,d)$.

Furthermore $\angle_{d}(a,b) + \angle_{d}(b,c) > \pi$ if and only if $d_{\mathbf{R}^{2}}(b',d') >
d_{\mathbf{R}^{2}}(b,d)$. In this case, one also has $\angle_{a'}(b',d') > \angle_{a}(b,d)$ and
$\angle_{c'}(b',d') > \angle_{c}(b,d)$.

Any one equality implies the others and occurs if and only if $\angle_{d}(a,b) + \angle_{d}(b,c) =
\pi$.
\end{lemma}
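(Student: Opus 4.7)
The plan is to prove the lemma by an unfolding construction in $\mathbb{R}^2$, reducing the comparison of $d(b,d)$ with $d(b',d')$ to a chain of Euclidean hinge (SSS) comparisons. The key idea is to build an auxiliary configuration in which the broken path $a$--$d$--$c$ is straightened, mirroring the structure of the lemma's triangle $\triangle(a', b', c')$.

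First I would extend the ray from $a$ through $d$ by length $d(d,c)$ to obtain a point $c^* \in \mathbb{R}^2$ with $d \in [a, c^*]$, and then place $b^*$ on the same side of the line $ac^*$ as $b$, at distance $d(b,d)$ from $d$, with $\angle_d(b^*, c^*) = \angle_d(b, c)$. By construction $\triangle(d, b^*, c^*) \cong \triangle(d, b, c)$, so $d(b^*, c^*) = d(b, c)$. Writing $\theta_a = \angle_d(a, b)$ and $\theta_c = \angle_d(b, c)$, the angle $\angle_d(b, c^*)$ equals $\pi - \theta_a$ because $c^*$ lies on the ray opposite to $da$. The law of cosines applied in $\triangle(b, d, c^*)$ and in $\triangle(b, d, c)$, after subtraction, yields
\[
d(b, c^*)^2 - d(b,c)^2 \;=\; 2\,d(b,d)\,d(d,c)\,(\cos\theta_a + \cos\theta_c),
\]
whose sign, by the sum-to-product identity, equals the sign of $\pi - (\theta_a + \theta_c)$, since $\cos((\theta_a-\theta_c)/2) > 0$ for $\theta_a,\theta_c \in (0,\pi)$.

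Next I would compare $\triangle(a, b, c^*)$ with $\triangle(a', b', c')$: they share two pairs of equal sides, $d(a,b) = d(a',b')$ and $d(a,c^*) = d(a,d) + d(d,c) = d(a',c')$, so the hinge inequality turns the comparison of the third sides $d(b, c^*)$ and $d(b,c) = d(b',c')$ into a comparison of the included angles $\angle_a(b, c^*)$ and $\angle_{a'}(b', c')$. Since $d \in [a, c^*]$ and $d' \in [a', c']$, these angles coincide with $\angle_a(b, d)$ and $\angle_{a'}(b', d')$. One last hinge comparison, this time of $\triangle(a, b, d)$ with $\triangle(a', b', d')$ (which share $d(a,b) = d(a',b')$ and $d(a,d) = d(a',d')$), translates the angle comparison into the sought comparison of $d(b,d)$ with $d(b',d')$. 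Chaining the three equivalences yields the main dichotomy together with $\angle_{a'}(b', d') < \angle_a(b, d)$ in the strict case; the symmetric statement at $c'$ follows by repeating the whole argument with the roles of $a$ and $c$ swapped.

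The equality case $\theta_a + \theta_c = \pi$ collapses at once: then $b^* = b$, $\triangle(a, b, c^*) \cong \triangle(a', b', c')$ by SSS, and all three strict inequalities become equalities simultaneously. The main obstacle I anticipate is not any single step but the bookkeeping of the unfolding: one has to verify that $b^*$ really lies on the intended side of the line $ac^*$, that $\triangle(a, b, c^*)$ is nondegenerate (equivalent to $b, d, c^*$ not being collinear, i.e., $\theta_a \neq 0, \pi$), and that the signs align across the three successive hinge comparisons. Once the identity above for $d(b,c^*)^2 - d(b,c)^2$ is in hand, the remainder of the argument is mechanical.
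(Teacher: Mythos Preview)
The paper does not supply its own proof of this lemma; it merely cites Bridson--Haefliger, chapter I.2, page 25. Your unfolding argument---straighten the broken path $a$--$d$--$c$ to obtain $c^*$, compare $d(b,c^*)$ with $d(b,c)$ via the law of cosines and the sum-to-product identity, then cascade two SSS/hinge comparisons to reach $d(b',d')$ versus $d(b,d)$---is correct and is essentially the classical proof found in that reference (the auxiliary point $b^*$ is not strictly needed for the main chain of inequalities, but it does no harm and makes the equality case transparent).
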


Let $K$ be a simplicial complex and let $\alpha$ be an $i$-simplex
of $K$. If $\beta$ is a $k$-dimensional face of $\alpha$ but not of
any other simplex in $K$, then we say there is an \emph{elementary
collapse} from $K$ to $K \setminus \{\alpha, \beta\}$. If $K = K_{0}
\supseteq K_{1} \supseteq ... \supseteq K_{n} = L$ are simplicial
complexes such that there is an elementary collapse from $K_{j-1}$
to $K_{j}$, $1 \leq j \leq n$, then we say that $K$
\emph{simplicially collapses} to $L$.

Let $K$ be a finite, connected simplicial complex endowed with the
standard piecewise Euclidean metric. We define the \emph{standard
piecewise Euclidean metric} on $|K|$ by taking the distance between
any two points $x,y$ in $|K|$ to be the infimum over all paths in
$|K|$ from $x$ to $y$. Each simplex of $K$ is isometric with a
regular Euclidean simplex of the same dimension with side lengths
equal $1$.

\section{Collapsing certain CAT(0) simplicial complexes of dimension $3$}

In this section we prove that finite, CAT(0)
simplicial $3$-complexes satisfying Property A collapse to a point through CAT(0) subspaces. Our proof has two steps.
Firstly, because CAT(0)
spaces have a strongly convex metric, White's result given in \cite{white_1967} ensures that finite,
$3$-complexes endowed with a CAT(0) metric, have a $3$-simplex with a free $2$-dimensional
($1$-dimensional) face. So we may perform an elementary collapse on such complex.
The second step is to investigate whether the subcomplex obtained by performing an elementary
collapse on a
CAT(0) $3$-complex remains, at each step of the retraction, nonpositively curved. We will be able to
analyze whether such space still has locally a CAT(0) metric, only once we have found its
new local geodesic segments.

We start by characterizing the curvature of a $2$-simplex of a CAT(0) simplicial complex.

\begin{lemma}\label{3.1}
Let $K$ be a simplicial complex. If $|K|$
admits a CAT(0) metric $d$, then any $2$-simplex in $K$ is isometric to its comparison triangle in
$\mathbf{R}^{2}$.
\end{lemma}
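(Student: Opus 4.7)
The plan is to verify that the three edges of $\tau$ form a geodesic triangle of zero curvature in $|K|$; once this is established, the Alexandrov isometry result recalled in the preliminaries (any geodesic triangle of curvature zero in a convex metric space is isometric to its comparison triangle in $\mathbf{R}^{2}$) identifies the boundary, and a distance-reconstruction argument extends the identification to the 2-dimensional interior.

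Let $\tau$ have vertices $p,q,r$. Its three edges $[p,q],[q,r],[r,p]$ are geodesic segments in $(|K|,d)$ and form a geodesic triangle $\triangle=\triangle(p,q,r)$ with Alexandrov angles $\alpha,\beta,\gamma$. The CAT(0) angle-comparison characterization of $|K|$ immediately gives the upper bound
\[
\omega(\triangle)=\alpha+\beta+\gamma-\pi\le 0.
\]
For the reverse inequality I would exploit that $\tau$ is a \emph{filled} 2-simplex and not merely its boundary. Concretely, I would pick an interior point $m$ of the side $[q,r]$, join it to $p$ by a geodesic inside $\tau$, and analyze the two subtriangles $\triangle_{1}=\triangle(p,q,m)$ and $\triangle_{2}=\triangle(p,m,r)$. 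Since $m\in[q,r]$ we have $\angle_{m}(q,r)=\pi$, the angle at $q$ in $\triangle_{1}$ equals $\beta$, and the angle at $r$ in $\triangle_{2}$ equals $\gamma$; the angle triangle inequality at $m$ then forces $\angle_{m}(p,q)+\angle_{m}(p,r)\ge\pi$. Placing the comparison triangles $\bar{\triangle}_{1},\bar{\triangle}_{2}$ back-to-back along their common side $[\bar p,\bar m]$ and invoking Aleksandrov's Lemma~\ref{1.1.25} in its $\ge\pi$ case transfers the angle information to the comparison triangle $\bar\triangle$ of the full triangle $\triangle$; combined with the filled 2-dimensional structure of $\tau$ (ensuring interior geodesics exist and join any point of the interior to the three vertices), the resulting chain of angle inequalities at $\bar p,\bar q,\bar r$ yields $\alpha+\beta+\gamma\ge\pi$.

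With $\omega(\triangle)=0$ in hand, the Alexandrov isometry theorem identifies the three sides of $\triangle$ isometrically with the three sides of $\bar\triangle\subset\mathbf{R}^{2}$. To extend this edge-isometry to the 2-dimensional interior of $\tau$, each interior point $x\in\tau$ is mapped to the unique point $\bar x\in\bar\triangle$ whose distances to $\bar p,\bar q,\bar r$ match $d(x,p),d(x,q),d(x,r)$; existence, uniqueness, and the isometry property on pairs of interior points follow from the CAT(0) $4$-point condition applied to quadruples containing the three vertices and the interior points under consideration.

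The main obstacle is the lower bound $\omega(\triangle)\ge 0$: in a general CAT(0) space a genuine angle defect is permitted, so this step uses essentially the filled 2-dimensional nature of $\tau$, and the subtle point is making Aleksandrov's Lemma yield the equality case in the CAT(0) angle comparison. Everything else is then either a direct quotation of preliminaries or a routine interior-filling argument.
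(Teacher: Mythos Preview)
Your subdivision strategy (pick $m$ on one side, glue the two comparison sub-triangles back-to-back along $[\bar p,\bar m]$, combine the angle triangle inequality at $m$ with CAT(0) angle comparison to get angle sum $\ge\pi$ at $\bar m$, then apply Aleksandrov's Lemma~\ref{1.1.25}) is exactly the paper's approach; the paper's $d\in[b,c]$ plays the role of your $m\in[q,r]$.

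The execution differs at the key step, and your version has a gap. The chain you describe---CAT(0) on the two sub-triangles pushes the comparison angles in the glued figure above the actual angles, and Aleksandrov's Lemma in its $\ge\pi$ case pushes the angles of the full comparison triangle $\bar\triangle$ above those of the glued figure---runs in the direction $\bar\alpha+\bar\beta+\bar\gamma\ge\alpha+\beta+\gamma$, i.e.\ $\pi\ge\alpha+\beta+\gamma$, which is the upper bound again, not the claimed lower bound. Nothing about the filled $2$-dimensional nature of $\tau$ is actually used in your argument to reverse this. The paper does \emph{not} separate upper and lower bounds; instead it forces the \emph{equality} case of Aleksandrov's Lemma directly by observing that the shared side of the glued figure satisfies $d_{\mathbf{R}^{2}}(\bar p,\bar m)=d(p,m)$ by construction of comparison triangles, and then reads this equality back through the lemma (``one equality implies the others'') to obtain simultaneously $\angle_{\bar m}(\bar p,\bar q)+\angle_{\bar m}(\bar p,\bar r)=\pi$ and equality of all three vertex angles with their comparison angles, hence $\alpha+\beta+\gamma=\pi$ in one stroke.

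Your final paragraph, extending the boundary isometry to the interior via the CAT(0) $4$-point condition, is additional scaffolding not present in the paper; the paper simply quotes the Alexandrov result from the preliminaries that a curvature-zero geodesic triangle in a convex metric space is isometric to its comparison triangle and stops there.
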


\begin{proof}\label{3.2}

Let $\triangle (a,b,c)$ be a
$2$-simplex $\sigma$ of $K$ and let $d$ be a point on the edge $e =
[b,c]$. Let $\triangle (a',b',d')$ be a comparison triangle in
$\mathbf{R}^{2}$ for the geodesic triangle $\triangle (a,b,d)$ in $|K|$ and let $\triangle
(a',d',c')$ be a comparison triangle in $\mathbf{R}^{2}$ for the geodesic triangle
$\triangle (a,d,c)$ in $|K|$. We place the comparison triangles $\triangle
(a',b',d')$ and $\triangle (a',d',c')$ in different half-planes with
respect to the line $a'd'$ in $\mathbf{R}^{2}$.

Because any geodesic triangles in $|K|$ satisfies the CAT(0)
inequality and $d \in [b,c]$, we have \begin{center}$\pi = \angle_{d} (b,c) \leq
\angle_{d} (b,a) + \angle_{d} (a,c) \leq \angle_{d'} (b',a') +
\angle_{d'} (a',c')$.\end{center} So, since $\angle_{d'} (b',a') + \angle_{d'} (a',c')
\geq \pi$, Alexandrov's Lemma implies
\begin{center}$d_{\mathbf{R}^{2}}(a',d') \leq d(a,d)$.\end{center}
But $\triangle (a',b',d')$ is a comparison triangle for the geodesic triangle $\triangle
(a,b,d)$ in $|K|$ and therefore $d_{\mathbf{R}^{2}}(a',d') = d(a,d)$. Because one
equality in Alexandrov's Lemma implies the others, the following
equalities hold $\angle_{d'} (b',a') + \angle_{d'} (a',c') = \pi$,
$\angle_{b} (a,d) = \angle_{b'} (a',d')$, $\angle_{c} (a,d) =
\angle_{c'} (a',d')$ and $\angle_{a} (b,d) + \angle_{a} (d,c) =
\angle_{a'} (b',c')$. So the sum of the angles between the sides of
$\sigma$ equals $\pi$. Therefore, because $|K|$ has a convex
metric, the curvature of the $2$-simplex $\sigma$ equals
$\omega(\sigma) = \pi - \pi = 0$. So, since any
$2$-simplex in $K$ has curvature zero, any $2$-simplex in $K$ is
isometric to its comparison triangle in $\mathbf{R}^{2}$.

\end{proof}

We shall use the following lemmas frequently.

\begin{lemma}\label{3.3}
Let $(X,d)$ be a CAT(0) space. Then any path $c : [0,1] \rightarrow X$
in $X$ has a unique midpoint.
\end{lemma}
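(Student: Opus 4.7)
My plan is to interpret the midpoint of the path $c$ as a midpoint of the endpoints $x := c(0)$ and $y := c(1)$, that is, a point $m \in X$ with $d(x,m) = d(y,m) = \tfrac{1}{2}d(x,y)$. The only features of the path used will be its endpoints, and the proof will split into existence and uniqueness.

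For existence, I would simply invoke the fact that $X$ is a geodesic space: take a geodesic $\gamma:[0,d(x,y)]\to X$ joining $x$ to $y$, and let $m = \gamma\!\left(\tfrac{1}{2}d(x,y)\right)$. The isometric parametrisation of $\gamma$ gives $d(x,m)=d(m,y)=\tfrac{1}{2}d(x,y)$ immediately.

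For uniqueness, the natural tool is the strong convexity estimate already recorded in the Preliminaries: for every $\varepsilon>0$ there exists $\delta>0$ such that if $m$ is the midpoint of a geodesic segment $[x,y]$ and $\max\{d(x,m'),d(y,m')\}\leq \tfrac{1}{2}d(x,y)+\delta$, then $d(m,m')<\varepsilon$. I would suppose $m$ and $m'$ are two points satisfying $d(x,\cdot)=d(y,\cdot)=\tfrac{1}{2}d(x,y)$, so that $m$ can be regarded as the midpoint of a fixed geodesic between $x$ and $y$, while $m'$ automatically satisfies the hypothesis of the strong convexity estimate with any $\delta>0$. This forces $d(m,m')<\varepsilon$ for every $\varepsilon>0$, hence $m=m'$.

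I do not expect any real obstacle: both existence and uniqueness are immediate consequences of facts quoted in the Preliminaries, namely that a CAT(0) space is geodesic and that its metric is strongly convex. The only mild subtlety is clarifying what \textbf{midpoint of a path} means (endpoints versus arc-length parameter); with the endpoint interpretation the statement is the standard uniqueness of midpoints in CAT(0) spaces, and it is this formulation that will be useful in the later collapsibility arguments.
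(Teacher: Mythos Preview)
Your argument is correct for the interpretation you chose, and the key tool---the strong-convexity (approximate-midpoint) estimate from the Preliminaries---is exactly the one the paper uses. The difference is in what ``midpoint of a path'' means. The paper does \emph{not} take the endpoint interpretation: its proof fixes $t\in[0,1]$ with $l(c|_{[0,t]})=l(c|_{[t,1]})=\tfrac12\,l(c)$, i.e.\ the arc-length midpoint, and then argues that any other parameter $t'$ with the same property must give $c(t')=c(t)$ by feeding the bounds $d(c(0),c(t'))\le l(c|_{[0,t']})$ and $d(c(1),c(t'))\le l(c|_{[t',1]})$ into the strong-convexity estimate. So you and the paper prove formally different statements with the same device.

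Your claim that the endpoint reading ``will be useful in the later collapsibility arguments'' is not quite right: when the lemma is invoked in Lemma~\ref{3.7}, the map $\mathrm{mid}$ is defined explicitly by halving arc-length, $l(c_1|_{[t_1,t]})=l(c_1|_{[t,t_2]})=\tfrac12\,l(c_1|_{[t_1,t_2]})$, and Lemma~\ref{3.3} is cited to show this is well-defined. Fortunately, in that application the path $c_1$ is the edge $e_1$, hence a geodesic, so the two notions of midpoint coincide and your version would suffice there. But to match the paper's statement and its later use you should switch to the arc-length reading; the adjustment to your argument is minimal.
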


\begin{proof}\label{3.4}

Let $t \in [0,1]$ be such that $l(c|_{[0,t]}) = l(c|_{[t,1]}) =
\frac{\textstyle 1}{\textstyle 2}l(c|_{[0,1]})$. Because $X$ is a
CAT(0) space, for every $\varepsilon > 0$ there exists $\delta = \delta(\varepsilon)$ such that if
\begin{center}
$l(c|_{[0,t']}) = l(c|_{[t',1]}) = \frac{\textstyle 1}{\textstyle 2}l(c|_{[0,1]})
\leq \frac{\textstyle 1}{\textstyle 2}l(c|_{[0,1]}) + \delta $,\\
\end{center}
$t' \in [0,1]$, then $d(c(t), c(t')) < \varepsilon$. So, because
$d(c(t), c(t')) < \varepsilon$ for every $\varepsilon > 0$, $d(c(t),
c(t')) = 0$. The path $c$ has therefore a unique midpoint.

\end{proof}

\begin{lemma}\label{3.41}
Let $(X,d)$ be a CAT(0) space and let $p,q,s,t$ be four distinct points in $X$ such that
$\angle_{s}(p,t)
+ \angle_{s}(t,q) \geq \pi$. Then the following inequality holds
$d(p,s) + d(s,q) < d(p,t) + d(t,q)$.
\end{lemma}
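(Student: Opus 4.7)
The plan is to transfer the angle hypothesis to comparison triangles in $\mathbb{R}^{2}$ and then apply Aleksandrov's Lemma.

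First, I build a Euclidean model by gluing two comparison triangles along a common edge. Take a comparison triangle $\overline{\triangle}(\overline{p},\overline{s},\overline{t})\subset\mathbb{R}^{2}$ for the geodesic triangle $\triangle(p,s,t)\subset X$, and a comparison triangle $\overline{\triangle}(\overline{t},\overline{s},\overline{q})\subset\mathbb{R}^{2}$ for $\triangle(t,s,q)\subset X$, positioned so that they share the edge $[\overline{s},\overline{t}]$ and $\overline{p},\overline{q}$ lie in different half-planes with respect to the line through $\overline{s},\overline{t}$. Because $X$ is CAT(0), Aleksandrov angles are dominated by comparison angles, so
\[
\overline{\angle}_{\overline{s}}(\overline{p},\overline{t})+\overline{\angle}_{\overline{s}}(\overline{t},\overline{q})\ \geq\ \angle_{s}(p,t)+\angle_{s}(t,q)\ \geq\ \pi.
\]

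Next, I apply Aleksandrov's Lemma (Lemma \ref{1.1.25}) to the four planar points with $a=\overline{p}$, $b=\overline{t}$, $c=\overline{q}$, $d=\overline{s}$. The auxiliary Euclidean triangle $\triangle(\overline{p}',\overline{t}',\overline{q}')$ that the lemma produces has side lengths $|\overline{p}'\overline{t}'|=d(p,t)$, $|\overline{t}'\overline{q}'|=d(t,q)$ and $|\overline{p}'\overline{q}'|=d(p,s)+d(s,q)$, with $\overline{s}'\in[\overline{p}',\overline{q}']$ at distance $d(p,s)$ from $\overline{p}'$; the $\geq\pi$ case of the lemma yields $|\overline{t}'\overline{s}'|\geq d(s,t)>0$. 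The strict Euclidean triangle inequality in $\triangle(\overline{p}',\overline{t}',\overline{q}')$ then gives
\[
d(p,s)+d(s,q)\ =\ |\overline{p}'\overline{q}'|\ <\ |\overline{p}'\overline{t}'|+|\overline{t}'\overline{q}'|\ =\ d(p,t)+d(t,q),
\]
which is the desired inequality.

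The main technical subtlety is verifying that $\triangle(\overline{p}',\overline{t}',\overline{q}')$ is genuinely nondegenerate, so that the Euclidean triangle inequality is strict. When the comparison angle sum at $\overline{s}$ is strictly greater than $\pi$, this is immediate, since $|\overline{t}'\overline{s}'|>d(s,t)>0$ forces $\overline{t}'\notin[\overline{p}',\overline{q}']$. In the borderline case when the comparison sum equals $\pi$ exactly, I would argue directly in $\mathbb{R}^{2}$: parameterize the line through $\overline{s},\overline{t}$ by arclength $L$ from $\overline{s}$ toward $\overline{t}$ and consider $g(L)=|\overline{p}-\overline{t}(L)|+|\overline{t}(L)-\overline{q}|$. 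Provided $\overline{p},\overline{q}$ do not lie on this line (so the individual comparison triangles are nondegenerate, an assumption automatic once we exclude the collinear configuration which must be treated separately), $g$ is strictly convex in $L$, with $g(0)=d(p,s)+d(s,q)$, $g(d(s,t))=d(p,t)+d(t,q)$, and one-sided derivative $g'(0^{+})=-\cos\overline{\angle}_{\overline{s}}(\overline{p},\overline{t})-\cos\overline{\angle}_{\overline{s}}(\overline{t},\overline{q})=0$; strict convexity with nonnegative derivative at $0$ then forces $g(d(s,t))>g(0)$, closing the borderline case. This boundary analysis is the only nonroutine part of the proof; everything else is bookkeeping around standard comparison-triangle constructions.
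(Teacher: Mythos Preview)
Your approach coincides with the paper's in its essential structure: both glue the comparison triangles $\overline{\triangle}(p,t,s)$ and $\overline{\triangle}(q,t,s)$ along the edge $[\overline{s},\overline{t}]$ and use the CAT(0) angle comparison to transfer the hypothesis $\angle_s(p,t)+\angle_s(t,q)\geq\pi$ to the planar configuration. The difference lies only in the Euclidean endgame. The paper splits on whether the comparison angle sum is exactly $\pi$ or strictly larger. In the strict case it argues directly that $\overline{s}$ lies in the interior of the Euclidean triangle $\triangle(\overline{p},\overline{t},\overline{q})$ and chains two triangle inequalities through an auxiliary point on $[\overline{t},\overline{q}]$; in the equality case it simply observes that $\overline{p},\overline{s},\overline{q}$ are then collinear, so $d(p,s)+d(s,q)=d_{\mathbf{R}^2}(\overline{p},\overline{q})$, and one application of the strict triangle inequality (using $d(s,t)\neq 0$) finishes in a single line. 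Your route through Aleksandrov's Lemma packages the strict case equivalently, but your convexity argument for the borderline case is considerably more elaborate than the paper's one-line observation. One small point: the assertion that $|\overline{t}'\overline{s}'|>d(s,t)>0$ ``forces $\overline{t}'\notin[\overline{p}',\overline{q}']$'' is not justified as stated, since a point distinct from $\overline{s}'$ may still lie on the segment; what you actually need is the nondegeneracy of the auxiliary triangle, which is precisely the direct geometric content of the paper's strict-inequality case.
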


\begin{proof}\label{3.42}

Let $\overline{\triangle} (p,t,s)$ be a comparison triangle in
$\mathbf{R}^{2}$ for the geodesic triangle $\triangle (p,t,s)$ in $X$ and let
$\overline{\triangle}
(q,t,s)$ be a comparison triangle in $\mathbf{R}^{2}$ for the geodesic triangle
$\triangle (q,t,s)$ in $X$. We place the comparison triangles
$\overline{\triangle} (p,t,s)$ and $\overline{\triangle} (q,t,s)$ in
different half-planes with respect to the line $\overline{t}\overline{s}$ in
$\mathbf{R}^{2}$.

By the CAT(0)
inequality, \begin{center} $ \angle_{s}(p,t) +
\angle_{s}(t,q) \leq \overline{\angle}_{s}(p,t) +
\overline{\angle}_{s}(t,q)$. \end{center} So, by hypothesis, it follows that

\begin{equation}\label{3.600}
\overline{\angle}_{s}(p,t) +
\overline{\angle}_{s}(t,q)
\geq \pi.
\end{equation}

If in \eqref{3.600} we have equality, taking into account that $d(t,s) \neq 0$, we get
$d_{\mathbf{R}^{2}}(\overline{p},
\overline{q}) = d_{\mathbf{R}^{2}}(\overline{p},
\overline{s}) + d_{\mathbf{R}^{2}}(\overline{s}, \overline{q}) <
d_{\mathbf{R}^{2}}(\overline{p}, \overline{t}) + d_{\mathbf{R}^{2}}(\overline{t},
\overline{q})$. So $d(p,s) + d(s,q) < d(p,t) + d(t,q)$.

\begin{figure}[ht]
  \vspace{-10pt}
  \begin{minipage}[b]{0.95\linewidth}
    \centering
    \raisebox{-0.75cm}{
      \includegraphics[height=4cm]{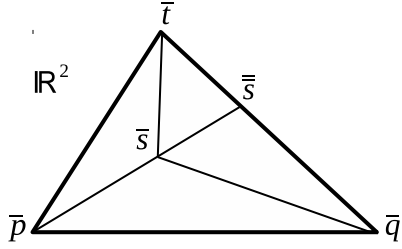}
    }
 \caption{Comparison triangles in $\mathbf{R}^{2}$}

    \label{fig:Pic1}
  \end{minipage}
\end{figure}

If the inequality in \eqref{3.600} is strict, the comparison triangles $\overline{\triangle}
(p,t,s)$ and $\overline{\triangle}
(q,t,s)$ in $\mathbf{R}^{2}$ are placed one with
respect to the other as in the figure above. Because the
curvature at any point in $\mathbf{R}^{2}$ equals zero, while any
Euclidean triangle has curvature zero, we get: $\overline{\angle}_{p}
(t,s) < \overline{\angle}_{p} (t,q)$ and
$\overline{\angle}_{q} (t,s) < \overline{\angle}_{q}
(t,p)$. The point $\overline{s}$ lies therefore in the interior of the
Euclidean triangle $\overline{\triangle} (t, p, q)$. We
consider a point $\overline{\overline{s}}$ on $[\overline{t}, \overline{q}]$ such that
$\overline{s}$ lies on $[\overline{p}, \overline{\overline{s}}]$. Thus

\begin{center}
$d_{\mathbf{R}^{2}}(\overline{p}, \overline{s}) +
d_{\mathbf{R}^{2}}(\overline{s}, \overline{q}) < $\\\hspace{30 mm}\\
$< d_{\mathbf{R}^{2}}(\overline{p},\overline{s}) +
d_{\mathbf{R}^{2}}(\overline{s},\overline{\overline{s}}) +
d_{\mathbf{R}^{2}}(\overline{\overline{s}}, \overline{q}) = $\\\hspace{30 mm}\\
$= d_{\mathbf{R}^{2}}(\overline{p},\overline{\overline{s}}) +
d_{\mathbf{R}^{2}}(\overline{\overline{s}},\overline{q})$.\hspace{30 mm}\\
\end{center}
Further

\begin{center}
$d_{\mathbf{R}^{2}}(\overline{p},\overline{\overline{s}}) +
d_{\mathbf{R}^{2}}(\overline{\overline{s}},\overline{q}) < $\\
\hspace{30 mm}\\
$< d_{\mathbf{R}^{2}}(\overline{p},\overline{t}) +
d_{\mathbf{R}^{2}}(\overline{t}, \overline{\overline{s}}) +
d_{\mathbf{R}^{2}}(\overline{\overline{s}},\overline{q}) =$\\
\hspace{30 mm}\\
$= d_{\mathbf{R}^{2}}(\overline{p},\overline{t}) +
d_{\mathbf{R}^{2}}(\overline{t},\overline{q})$.\\
\end{center}
Hence

\begin{center}
$d_{\mathbf{R}^{2}}(\overline{p},\overline{s}) +
d_{\mathbf{R}^{2}}(\overline{s},\overline{q}) <
d_{\mathbf{R}^{2}}(\overline{p},\overline{t}) +
d_{\mathbf{R}^{2}}(\overline{t},\overline{q})$.\\
\end{center}
So the following inequality holds in $X$:

\begin{center}
$d(p,s) + d(s,q) < d(p,t) + d(t,q)$.
\end{center}

\end{proof}

\begin{lemma}\label{3.43}
Let $(X,d)$ be a CAT(0) space and let $(s_{n})_{n \in \mathbf{N}}$ and $(t_{n})_{n \in
\mathbf{N}}$ be distinct sequences of points on a geodesic segment $e$ in $X$ such that $\underset{n
\rightarrow \infty}
\lim   d(s_{n},t_{n}) = 0$. Then for any point $p$ in $X$ which does not lie on the geodesic segment $e$, we
have $\underset{n \rightarrow
\infty}\lim
\angle _{p}(s_{n},t_{n}) = 0$.
\end{lemma}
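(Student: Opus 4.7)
The plan is to dominate the Aleksandrov angle $\angle_{p}(s_{n},t_{n})$ by the corresponding comparison angle $\overline{\angle}_{p}(s_{n},t_{n})$ in $\mathbf{R}^{2}$ via the CAT(0) angle characterization recalled in the preliminaries, and then to show directly, by a Euclidean law-of-cosines computation, that this comparison angle tends to $0$.

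First I would set $a_{n} = d(p,s_{n})$, $b_{n} = d(p,t_{n})$, $c_{n} = d(s_{n},t_{n})$, and for each $n$ I would fix a comparison triangle $\overline{\triangle}(\overline{p},\overline{s}_{n},\overline{t}_{n})$ in $\mathbf{R}^{2}$ for the geodesic triangle $\triangle(p,s_{n},t_{n})$ in $X$. The CAT(0) angle comparison then gives $\angle_{p}(s_{n},t_{n}) \leq \overline{\angle}_{p}(s_{n},t_{n})$, so it suffices to show that $\overline{\angle}_{p}(s_{n},t_{n}) \to 0$. Note that the angle is well-defined for every $n$, since the hypothesis that the two sequences are distinct gives $s_{n} \neq t_{n}$, and the assumption $p \notin e$ together with $s_{n}, t_{n} \in e$ guarantees $s_{n}, t_{n} \neq p$.

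Next I would bound the sides $a_{n}$ and $b_{n}$ from below. The geodesic segment $e$ is compact, being the continuous image of a closed bounded real interval, and $p \notin e$ by hypothesis, so $\delta := d(p,e) > 0$. Consequently $a_{n}, b_{n} \geq \delta$ for every $n$. The triangle inequality also yields $|a_{n} - b_{n}| \leq c_{n}$, so the hypothesis $c_{n} \to 0$ implies $(a_{n} - b_{n})^{2} \leq c_{n}^{2} \to 0$.

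Finally I would apply the Euclidean law of cosines in $\overline{\triangle}(\overline{p},\overline{s}_{n},\overline{t}_{n})$ to write
\[ \cos \overline{\angle}_{p}(s_{n},t_{n}) = \frac{a_{n}^{2} + b_{n}^{2} - c_{n}^{2}}{2 a_{n} b_{n}} = 1 + \frac{(a_{n} - b_{n})^{2} - c_{n}^{2}}{2 a_{n} b_{n}}. \]
The numerator of the second summand tends to $0$, while its denominator stays at least $2\delta^{2} > 0$, so $\cos \overline{\angle}_{p}(s_{n},t_{n}) \to 1$; hence $\overline{\angle}_{p}(s_{n},t_{n}) \to 0$, and the CAT(0) comparison then delivers $\angle_{p}(s_{n},t_{n}) \to 0$. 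The argument is essentially a one-line calculation, and the only step where the hypotheses are genuinely needed is the lower bound $d(p,e) > 0$; this is where both the compactness of $e$ and the assumption $p \notin e$ enter in an essential way, so this is the only place where a subtle issue could conceivably arise.
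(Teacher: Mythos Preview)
Your proof is correct and follows essentially the same approach as the paper: bound the Aleksandrov angle by the comparison angle via the CAT(0) characterization, and then show the comparison angle tends to zero. The paper's own argument is terser and simply asserts that $d_{\mathbf{R}^{2}}(\overline{s}_{n},\overline{t}_{n})\to 0$ implies $\overline{\angle}_{p}(s_{n},t_{n})\to 0$ without further justification; your law-of-cosines computation together with the lower bound $d(p,e)>0$ makes explicit exactly the step the paper leaves to the reader, and in particular explains where the hypothesis $p\notin e$ is actually used.
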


\begin{proof}\label{3.44}

We consider the comparison triangle
$\overline{\triangle} (p,s_{n},t_{n})$ in $\mathbf{R}^{2}$ for the geodesic triangle
$\triangle (p,s_{n},t_{n})$ in $X$. By hypothesis, it follows that
\begin{center}
$\underset{n \rightarrow \infty} \lim
d_{\mathbf{R}^{2}}(\overline{s}_{n},\overline{t}_{n}) = 0$.\\
\end{center}
So
\begin{center}
$\underset{n \rightarrow \infty} \lim
\overline{\angle}_{p}(s_{n},t_{n}) = 0$.\\
\end{center}
Because $X$ is a CAT(0) space, we have
\begin{center}
$0 \leq \angle_{p}(s_{n},t_{n}) \leq
\overline{\angle}_{p}(s_{n},t_{n})$.\\
\end{center}
Hence \begin{center}$\underset{n \rightarrow \infty}\lim
\angle _{p}(s_{n},t_{n}) = 0$.
\end{center}

\end{proof}

We fix, for the remainder of the paper, the following notations.

Let $K$ be a finite simplicial $3$-complex endowed with a CAT(0) metric $d$ and satisfying Property A. Because $K$ has a
strongly convex metric, it has a
$3$-simplex $\sigma$ with a free $k$-dimensional face $\alpha$, $k \in \{1,2\}$. Let $K' = K
\setminus \{\alpha, \sigma\}$ be the subcomplex obtained by performing an elementary collapse on
$K$ and let $d'$ be the induced metric on $K'$.
Let $a,b,c$ and $d$ be the vertices of the $3$-simplex $\sigma$. Let $\tau_{1}, \tau_{2}$ and
$\tau_{3}$ be three $2$-dimensional faces of $\sigma$ different from the
free face of $\sigma$ (in case $\sigma$ has a free $2$-dimensional face). Let $\tau_{1} \cap
\tau_{2} = e_{1} = [a,b]$, $\tau_{1} \cap \tau_{3} =
e_{2}
= [a,d]$ and $\tau_{2} \cap \tau_{3} = e_{3} = [a,c]$ be three edges of $\sigma$ different from the
free face of $\sigma$ (in case $\sigma$ has a free $1$-dimensional face) or a face of the free face
of $\sigma$ (in case $\sigma$ has a free $2$-dimensional face). We denote by $r = \max
\{d(a,b), d(a,c), d(a,d), d(b,c),
d(b,d), d(c,d)\}$. We consider in $|K|$ a neighborhood of $a$ homeomorphic to a closed ball of
radius $r$, $U = \{x \in |K| \mid d(a,x) \leq r\}$. Note that $U$ endowed with the induced
metric is a CAT(0) space. Because $U$ is complete and it has a strongly convex metric, any two
points in $U$ are joined by a unique geodesic segment which
belongs to $U$. So any geodesic triangle with vertices at any
three points in $U$, belongs to $U$, and it satisfies the CAT(0)
inequality. Furthermore, $U$ being a CAT(0) space, any $2$-simplex in $U$ has curvature zero
and it is therefore isometric to its comparison triangle in
$\mathbf{R}^{2}$. We consider in $|K'|$ a neighborhood of $a$ homeomorphic to a closed
ball of radius $r$, $U' = \{x \in |K'| \mid d'(a,x) \leq r\}$. We note that $U' = U
\setminus \{\alpha, \sigma\}$. We consider in $U$ two distinct points $p$ and $q$ that do not belong
to $\sigma$ such that the geodesic segment $[p,q]$ intersects the interior of $\tau_{1}$ in $p_{1}$,
and the interior of $\tau_{2}$ in $q_{1}$.

\begin{figure}[h]
   \begin{center}
     \includegraphics[height=4.5cm]{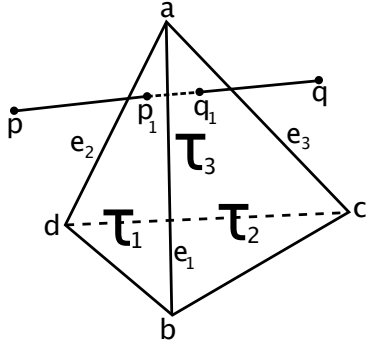}
        \caption{The $3$-simplex $\sigma$ in $K$ with the free $k$-dimensional face $\alpha$, $k \in
\{1,2\}$ intersected by the geodesic segment $[p,q]$}
 \end{center}
\end{figure}

We study further whether $K'$ still has locally a CAT(0) metric. Note that $K'$ inherits Property A from $K$. Namely, we will show that $U'$
is a CAT(0) space. We consider only the case when $K'$ is
obtained by pushing in an entire $3$-simplex with a free
face, by starting at its free face. It is important to note, however,
that the same result holds for any deformation retract of $|K|$ obtained by pushing in any
tetrahedron $\delta$ in $|K|$ such that one face of $\delta$ belongs to the free face of $K$. We
will be able to investigate whether any geodesic triangle in $U'$ satisfies the CAT(0)
inequality, only once we have found the new geodesic segments in $U'$.
We note that any two points in $U$ joined in $U$ by a segment which does not intersect the
interior of $\sigma$, are joined in $U'$ by a segment that coincides with
the segment that joins these points in $U$. So we still have to find the paths of shortest length in
$U'$ joining those pair of
points that are joined in $U$ by a segment that intersects the interior of $\sigma$. We shall be
concerned with this problem in the following corollary and five lemmas.

\begin{lemma}\label{3.7}
There exists a unique point $s$ on $e_{1}$ such that $\angle_{s}(a,p_{1}) =
\angle_{s}(b,q_{1})$ and $\angle_{s}(a,q_{1}) = \angle_{s}(b,p_{1})$. For such point $s$ we have
$\angle_{s}(p_{1},t) + \angle_{s}(t,q_{1}) = \pi$,
for any point $t$ on $e_{1}$ that differs from $s$. In particular,
the following inequality holds $d(p_{1},s) + d(s,q_{1}) < d(p_{1},t) + d(t,q_{1})$.
\end{lemma}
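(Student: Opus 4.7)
The plan is to unfold the two flat $2$-simplices $\tau_1$ and $\tau_2$ along their common edge $e_1$ into the Euclidean plane. By Lemma 3.1, each of $\tau_1$ and $\tau_2$ is isometric to its comparison triangle in $\mathbf{R}^2$. I place these comparison triangles $\overline{\tau_1}$ and $\overline{\tau_2}$ in the plane so that they share the common edge $\overline{e_1}$, lying on opposite sides of the line through $\overline{e_1}$; let $\overline{p_1}$ (in the interior of $\overline{\tau_1}$) and $\overline{q_1}$ (in the interior of $\overline{\tau_2}$) denote the images of $p_1$ and $q_1$ under these isometries.

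Since $\overline{p_1}$ and $\overline{q_1}$ lie in opposite open half-planes with respect to the line through $\overline{e_1}$, the straight Euclidean segment $[\overline{p_1},\overline{q_1}]$ meets that line at a unique point $\overline{s}$. In the geometric setup under consideration, where the geodesic $[p,q]$ in the CAT$(0)$ space $U$ passes through the interior of $\sigma$ from $p_1$ to $q_1$, one verifies that $\overline{s}$ in fact lies in the interior of the edge $\overline{e_1}$; let $s\in e_1$ be the corresponding point. At $\overline{s}$ the rays toward $\overline{a}$ and $\overline{b}$ are opposite (along $\overline{e_1}$) and the rays toward $\overline{p_1}$ and $\overline{q_1}$ are opposite (along $[\overline{p_1},\overline{q_1}]$); the two pairs of vertical angles in the plane therefore give $\angle_{\overline{s}}(\overline{a},\overline{p_1})=\angle_{\overline{s}}(\overline{b},\overline{q_1})$ and $\angle_{\overline{s}}(\overline{a},\overline{q_1})=\angle_{\overline{s}}(\overline{b},\overline{p_1})$, and these equalities transfer to angles at $s$ in $|K|$ via the isometries of Lemma 3.1. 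Uniqueness of $s$ follows from the uniqueness of the Euclidean intersection $\overline{s}$.

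For the second assertion, fix $t\in e_1\setminus\{s\}$; by the symmetry between the roles of $a$ and $b$ I may assume $t\in[a,s)$. Since $e_1=[a,b]$ is a geodesic segment in $U$ and $t$ lies between $a$ and $s$, the direction at $s$ toward $t$ coincides with the direction at $s$ toward $a$, so $\angle_s(p_1,t)=\angle_s(p_1,a)$ and $\angle_s(t,q_1)=\angle_s(a,q_1)$. The flatness of $\tau_2$ (Lemma 3.1) gives $\angle_s(a,q_1)+\angle_s(b,q_1)=\pi$; combining this with the already-established equality $\angle_s(a,p_1)=\angle_s(b,q_1)$ produces $\angle_s(p_1,t)+\angle_s(t,q_1)=\angle_s(a,p_1)+\angle_s(a,q_1)=\pi$. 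The strict inequality $d(p_1,s)+d(s,q_1)<d(p_1,t)+d(t,q_1)$ then follows at once by applying Lemma 3.41 to the four distinct points $p_1,s,t,q_1$ of the CAT$(0)$ space $U$.

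The principal obstacle is justifying that the Euclidean crossing point $\overline{s}$ lies in the interior of $\overline{e_1}$, rather than on the extension of the line through $\overline{e_1}$. This is a plane-geometry verification that relies on both the interior positions of $\overline{p_1}$ and $\overline{q_1}$ in their respective comparison triangles and on the hypothesis that the geodesic $[p,q]$ in $U$ genuinely traverses the interior of $\sigma$ between $\tau_1$ and $\tau_2$; once this interior-crossing is in hand, the rest of the argument reduces to the elementary Euclidean picture described above together with one application each of Lemma 3.1 and Lemma 3.41.
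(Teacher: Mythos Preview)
Your approach is genuinely different from the paper's. The paper proves existence indirectly: it classifies points of $e_1$ into types I, II, III according to which of the angle sums $\angle(p_1,a)+\angle(a,q_1)$ and $\angle(q_1,b)+\angle(b,p_1)$ is larger, assumes no type~III point exists, and then runs a bisection producing sequences $(s_n')$, $(s_n'')$ of types I and II with $d(s_n',s_n'')\to 0$; Lemma~\ref{3.43} forces $\angle_{p_1}(s_n',s_n'')\to 0$ and $\angle_{q_1}(s_n',s_n'')\to 0$, contradicting a positive lower bound obtained from the flatness of $\tau_1,\tau_2$. Uniqueness is handled by a separate angle argument. You instead unfold $\tau_1$ and $\tau_2$ into the plane along $e_1$ (via Lemma~\ref{3.1}) and locate $s$ directly as the crossing of the Euclidean segment $[\overline{p_1},\overline{q_1}]$ with $\overline{e_1}$; existence, uniqueness, and the angle equalities all drop out of vertical angles. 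This is considerably shorter and makes the geometric content transparent; the paper's bisection machinery is replaced by one picture.

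The obstacle you flag is real and you do not fully resolve it: the interior positions of $\overline{p_1}$ and $\overline{q_1}$ in their triangles do \emph{not} by themselves force $\overline{s}\in\overline{e_1}$ (an obtuse $\overline{\tau_1}$ can have interior points projecting onto the line through $\overline{e_1}$ outside $[\overline{a},\overline{b}]$), and it is not clear how the hypothesis that $[p,q]$ traverses the interior of the $3$-simplex $\sigma$ constrains the $2$-dimensional unfolding. Note, however, that the paper's proof carries exactly the same hidden assumption: its bisection begins by selecting both a type~I point \emph{and} a type~II point on $e_1$, which in your picture is precisely the condition that $\overline{s}$ lies strictly between $\overline{a}$ and $\overline{b}$. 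So on this point you are no worse off than the paper, and you have the merit of having made the gap explicit.
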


\begin{proof}\label{3.8}
We show first the existence of such point $s$.

We consider the path $c_{1} : [0,1] \rightarrow U$, $c_{1}(0) = a, c_{1}(1) = b, c_{1}(t) \in
e_{1}, \forall t \in (0,1)$,
i.e. the path $c_{1}$ is the edge $e_{1}$.

Note that for any $t \in (0,1)$,
\begin{equation}\label{3.100}
\angle_{c_{1}(t)}(a,p_{1}) + \angle_{c_{1}(t)}(p_{1},b) = \angle_{c_{1}(t)}(a,q_{1}) +
\angle_{c_{1}(t)}(q_{1},b) = \pi.\end{equation} So
\begin{equation}\label{3.101}
\angle_{c_{1}(t)}(a,p_{1}) + \angle_{c_{1}(t)}(p_{1},b) + \angle_{c_{1}(t)}(b,q_{1}) +
\angle_{c_{1}(t)}(q_{1},a) = 2\pi.
\end{equation}

We call the points $c_{1}(t), t \in [0,1]$ such that \begin{center}$\angle_{c_{1}(t)}(p_{1},a) +
\angle_{c_{1}(t)}(a,q_{1}) >
\angle_{c_{1}(t)}(q_{1},b) + \angle_{c_{1}(t)}(b,p_{1})$,\end{center} points of type I. Relation \eqref{3.101}
implies
that
for
any point of type I we have: $\angle_{c_{1}(t)}(p_{1},b) + \angle_{c_{1}(t)}(b,q_{1}) < \pi$ and
$\angle_{c_{1}(t)}(q_{1},a) + \angle_{c_{1}(t)}(a,p_{1}) > \pi$. Let $c_{1}(t_{1})$, $t_{1} \in
[0,1]$
be the point of type I on $c_{1}$ such
that $d(a,c_{1}(t_{1})) < d(a,c_{1}(t_{1}'))$, for any
point of type I $c_{1}(t_{1}')$, $t_{1}' \in [0,1], t_{1}' \neq t_{1}$.

We call the points $c_{1}(t), t \in [0,1]$ such that \begin{center}$\angle_{c_{1}(t)}(p_{1},a) +
\angle_{c_{1}(t)}(a,q_{1}) <
\angle_{c_{1}(t)}(q_{1},b) + \angle_{c_{1}(t)}(b,p_{1})$,\end{center} points of type II. By \eqref{3.101},
for any point of type II we have: $\angle_{c_{1}(t)}(p_{1},b) + \angle_{c_{1}(t)}(b,q_{1}) > \pi$
and
$\angle_{c_{1}(t)}(q_{1},a) + \angle_{c_{1}(t)}(a,p_{1}) < \pi$. Let $c_{1}(t_{2})$, $t_{2} \in
[0,1]$
be the point of type II on $c_{1}$ such
that $d(b,c_{1}(t_{2})) < d(b,c_{1}(t_{2}'))$, for any
point of type II $c_{1}(t_{2}')$, $t_{2}' \in [0,1], t_{2}' \neq t_{2}$.

We call the points $c_{1}(t), t \in [0,1]$ such that \begin{center}$\angle_{c_{1}(t)}(p_{1},a) +
\angle_{c_{1}(t)}(a,q_{1}) =
\angle_{c_{1}(t)}(q_{1},b) + \angle_{c_{1}(t)}(b,p_{1})$,\end{center} points of type III. Relation \eqref{3.101}
implies that for
any point of type III we have: $\angle_{c_{1}(t)}(p_{1},a) +
\angle_{c_{1}(t)}(a,q_{1}) =
\angle_{c_{1}(t)}(q_{1},b) + \angle_{c_{1}(t)}(b,p_{1}) = \pi$. Note that, by \eqref{3.100},
any point of type III fulfills the following $\angle_{c_{1}(t)}(a,p_{1}) =
\angle_{c_{1}(t)}(b,q_{1})$ and $\angle_{c_{1}(t)}(a,q_{1}) = \angle_{c_{1}(t)}(b,p_{1})$.

Suppose that there are no points of type III on $c_{1}$. Any point on
$c_{1}$ is therefore either a point of type I or a point of type
II.

We define the mapping $\rm{mid}: c_{1}[0,1] \times c_{1}[0,1] \rightarrow c_{1}[0,1]$
by \begin{center}$\forall t_{1}, t_{2} \in [0,1]$, $\rm{mid}(c_{1}(t_{1}), c_{1}(t_{2})) =
c_{1}(t)$,
$t \in [0,1]$,\end{center} where \begin{center}$l(c_{1}|_{[t_{1},t]}) = l(c_{1}|_{[t,t_{2}]}) =
\frac{\textstyle 1}{\textstyle 2}l(c_{1}|_{[t_{1},t_{2}]})$.\end{center} Because $U$
is a CAT(0) space, Lemma \ref{3.3} guarantees that the path $c_{1}$ has a unique
midpoint. The mapping $\rm{mid}$ is therefore well-defined.

We define the sequence $(s_{n})_{n \in \mathbf{N}}$ of tuples $(s'_{n}, s''_{n})$ as follows:
\begin{displaymath}
  \begin{tabular}{ l c l c l }
   \multicolumn{5}{ l }{-- the elements $s'_{n}$ are points of type I;}\\
   \hbox{} \\
   \multicolumn{5}{ l }{-- the elements $s''_{n}$ are points of type II;}\\
   \hbox{} \\
   -- $s_{0}$&=&$(s'_{0}, s''_{0})$&=&$(c_{1}(t_{1}), c_{1}(t_{2}))$; \\
   \hbox{} \\
   -- $s_{1}$&=&$(s'_{1}, s''_{1})$&=&
     $\left\{
       \begin{array}{ll}
         (s'_{0}, \rm{mid}(s'_{0}, s''_{0})), & \hbox{if } \rm{mid}(s'_{0}, s''_{0}) \hbox{ is a
point of
type
II;} \\
         \hbox{} \\
         (\rm{mid}(s'_{0}, s''_{0}), s''_{0}), & \hbox{if } \rm{mid}(s'_{0}, s''_{0}) \hbox{ is a
point of
type
I;}
       \end{array}
     \right.$ \\
   \hspace{2mm}\hbox{...} \\
   -- $s_{n}$&=&$(s'_{n}, s''_{n})$&=&
     $\left\{
       \begin{array}{ll}
         (s'_{n-1}, \rm{mid}(s'_{n-1}, s''_{n-1})), & \hbox{if } \rm{mid}(s'_{n-1}, s''_{n-1})
\hbox{ is a
point
of type II;} \\
         \hbox{} \\
         (\rm{mid}(s'_{n-1}, s''_{n-1}), s''_{n-1}), & \hbox{if } \rm{mid}(s'_{n-1}, s''_{n-1})
\hbox{ is a
point
of type I.}
       \end{array}
     \right.$
  \end{tabular}
\end{displaymath}

\begin{figure}[h]
   \begin{center}
     \includegraphics[height=4cm]{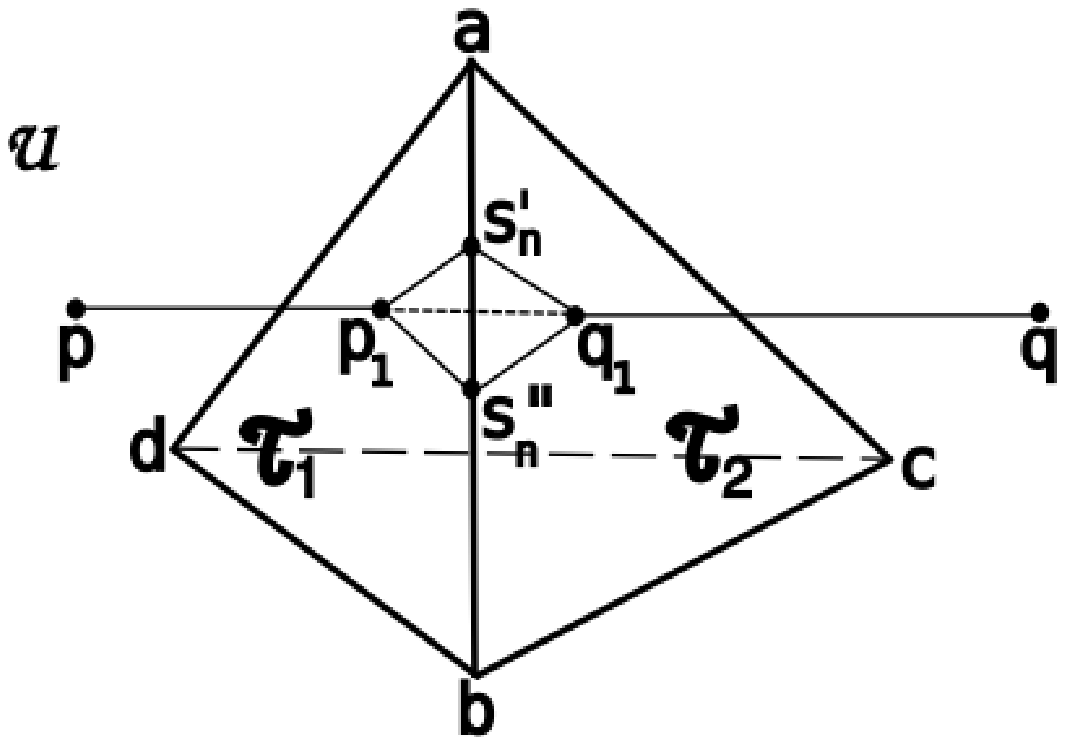}
 \caption{The segment $[p,q]$ that intersects the interior of $\sigma$\\$s_{n}'$ is a point of type
I on $e_{1}$\\$s_{n}''$ is a point of type II on $e_{1}$}
 \end{center}
\end{figure}

Let $s_{n}' = c_{1}(t_{n}')$ be a point of type I on $c_{1}$ and let
$s_{n}'' = c_{1}(t_{n}'')$ be a point of type II on $c_{1}$, $n \geq 1$ such that the position of
$s_{n}'$ with respect to $s_{n}''$ on
the edge $e_{1}$ is as in the figure below.
Because \begin{displaymath}
 \begin{tabular}{ l c l c l }
$\left\{
\begin{array}{ll}
         l(c_{1}|_{[t'_{n},t''_{n}]}) = \frac{\textstyle 1}{\textstyle 2^{n}}l(c_{1}|_{[0,1]}),  &
\hbox{if
}  t_{n}' < t_{n}'', \\
         \hbox{} \\
         l(c_{1}|_{[t''_{n},t'_{n}]}) = \frac{\textstyle 1}{\textstyle 2^{n}}l(c_{1}|_{[0,1]}), &
\hbox{if
}
t_{n}'' \leq t_{n}',
\end{array}
\right.$ \\
 \end{tabular}
\end{displaymath}
we have
\begin{displaymath}
 \begin{tabular}{ l c l c l }
$\left\{
\begin{array}{ll}
         \underset{n \rightarrow \infty}\lim l(c_{1}|_{[t'_{n},t''_{n}]}) = 0,  & \hbox{if }  t_{n}'
< t_{n}'', \\
         \hbox{} \\
         \underset{n \rightarrow \infty}\lim l(c_{1}|_{[t''_{n},t'_{n}]}) = 0, & \hbox{if } t_{n}''
\leq
t_{n}'.
\end{array}
\right.$ \\
 \end{tabular}
\end{displaymath}
There exists a unique geodesic segment in $U$ joining $s_{n}' =
c_{1}(t_{n}')$ to $s_{n}'' = c_{1}(t_{n}'')$ whose length equals $d(s_{n}',
s_{n}'')$. Because
\begin{displaymath}
 \begin{tabular}{ l c l c l }
$\left\{
\begin{array}{ll}
         0 \leq d(s_{n}', s_{n}'') \leq l(c_{1}|_{[t_{n}', t_{n}'']}),  & \hbox{if }  t_{n}' <
t_{n}'', \\
         \hbox{} \\
         0 \leq d(s_{n}', s_{n}'') \leq l(c_{1}|_{[t_{n}'', t_{n}']}), & \hbox{if } t_{n}'' \leq
t_{n}',
\end{array}
\right.$ \\
 \end{tabular}
\end{displaymath}
we get
\begin{center}
$\underset{n \rightarrow \infty} \lim   d(s_{n}', s_{n}'') = 0.$
\end{center}
Hence, by Lemma \ref{3.43}, \begin{equation}\label{3.300}\underset{n \rightarrow \infty}\lim
\angle _{p_{1}}(s_{n}', s_{n}'') = 0
\end{equation} and \begin{equation}\label{3.400}
\underset{n \rightarrow \infty}\lim
\angle _{q_{1}}(s_{n}', s_{n}'') = 0.
\end{equation}

On the other hand, because the geodesic triangles $\triangle(p_{1}, s_{n}',s_{n}'')$ and
$\triangle(q_{1},
s_{n}',s_{n}'')$
belong to $2-$simplices of curvature zero, we have
\begin{center}
$\angle _{p_{1}}(s_{n}', s_{n}'') + \angle _{s_{n}'}(p_{1}, s_{n}'') + \angle _{s_{n}''}(s_{n}', p_{1})  + \angle
_{q_{1}}(s_{n}', s_{n}'') + \angle _{s_{n}''}(q_{1}, s_{n}') + \angle _{s_{n}'}(s_{n}'',
q_{1}) =
2\pi$.
\end{center}
Because $s_{n}'$ is a point of type I, while $s_{n}''$ is a point of type II which lie one with respect
to the other on the edge $e_{1}$ as in the figure above, we have \begin{center}
$\angle _{s_{n}'}(p_{1}, s_{n}'') + \angle _{s_{n}'}(s_{n}'',
q_{1}) < \pi$
\end{center}
and \begin{center}
$\angle _{s_{n}''}(p_{1}, s_{n}') + \angle _{s_{n}''}(s_{n}', q_{1}) < \pi$.
\end{center}
The above three relations imply that  \begin{center}
$\angle _{p_{1}}(s_{n}', s_{n}'') + \angle
_{q_{1}}(s_{n}', s_{n}'') > 0$.
\end{center}
So, since any Alexandrov angle is a value in the interval $[0,\pi]$, either \begin{center}
$\underset{n \rightarrow \infty}\lim
\angle _{p_{1}}(s_{n}', s_{n}'') \neq 0$ \\
\end{center}
or \begin{center}
$\underset{n \rightarrow \infty}\lim
\angle _{q_{1}}(s_{n}', s_{n}'') \neq 0$ \\
\end{center}
or \begin{center}
$\underset{n \rightarrow \infty}\lim
\angle _{p_{1}}(s_{n}', s_{n}'') \neq 0$ and $\underset{n \rightarrow \infty}\lim
\angle _{q_{1}}(s_{n}', s_{n}'') \neq 0$. \\
\end{center}
Thus, according either to \eqref{3.300} or to \eqref{3.400} or to both, we have reached a
contradition. So there exist points of type III on $e_{1}$.

We show further that there exists a unique point of type III on $e_{1}$. Suppose, on the contrary,
there exist two points of
type III, say $s_{1}$ and $s_{2}$, on $e_{1}$. We assume that the position of $s_{1}$ with respect
to $s_{2}$ on the edge $e_{1}$ is as in the figure below.

\begin{figure}[h]
   \begin{center}
     \includegraphics[height=3cm]{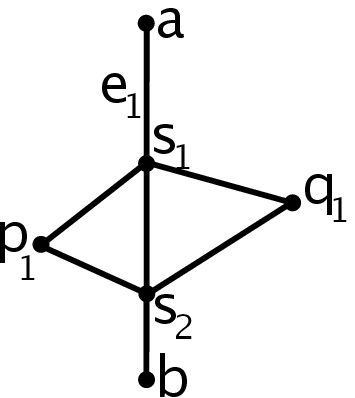}
        \caption{There exists a unique point of type III on $e_{1}$}
 \end{center}
\end{figure}
So \begin{center}
$\angle_{s_{1}}(p_{1},s_{2}) +
\angle_{s_{1}}(s_{2},q_{1}) = \pi$ \end{center} and \begin{center} $\angle_{s_{2}}(p_{1},s_{1}) +
\angle_{s_{2}}(s_{1},q_{1}) = \pi$. \end{center} Note that, since that the geodesic triangles
$\triangle(p_{1}, s_{1}, s_{2})$ and $\triangle(q_{1}, s_{1}, s_{2})$ belong to
$2$-simplices of curvature zero, we have
\begin{center}
$\angle_{p_{1}}(s_{1},s_{2}) + \angle_{s_{1}}(p_{1},s_{2}) + \angle_{s_{2}}(p_{1},s_{1}) +
\angle_{q_{1}}(s_{1},s_{2}) + \angle_{s_{1}}(q_{1},s_{2}) + \angle_{s_{2}}(q_{1},s_{1}) =
2\pi$.\end{center}
The above three relations imply that: \begin{center}
$\angle_{p_{1}}(s_{1},s_{2}) = 0$ \end{center} and \begin{center}
$\angle_{q_{1}}(s_{1},s_{2}) = 0$.
\end{center} Because the points $s_{1}$ and $s_{2}$ belong to $2$-simplices that are isometric
to geodesic triangles in $\mathbf{R}^{2}$, these relations ensure that $s_{1} = s_{2}$. So there
exists a
unique point, say $s$, on $e_{1}$ such that
$\angle_{s}(p_{1},t) + \angle_{s}(t,q_{1}) = \pi$, for
any point
$t$ on $e_{1}$ that differs from $s$.
Then, according to Lemma \ref{3.41},
the following inequality holds $d(p_{1},s) + d(s,q_{1}) < d(p_{1},t) + d(t,q_{1})$.

\end{proof}

The aim of the following lemma is to show that a relation similar to the one proven in the lemma above for the pair $p_{1}, q_{1}$, holds for the
pair of points $p,q$ as well. We prove this by showing that such relation holds, in fact, for
any pair of points on the geodesic segment $[p,q]$ such that one point of the pair lies on
$[p_{1},p]$, while the other one lies on $[q_{1},q]$.
The lemma follows due to the fact that the points $p,p_{1},q_{1}$ and $q$ lie, in this
order, on the same geodesic segment in a CAT(0) space.
It is important to keep in mind that any Alexandrov angle is a
value in the interval $[0,\pi]$.

\begin{lemma}\label{3.9}
Let $s$ be a point on $e_{1}$ such that $\angle_{s}(a,p_{1}) =
\angle_{s}(b,q_{1})$ and $\angle_{s}(a,q_{1}) = \angle_{s}(b,p_{1})$. Then $\angle_{s}(p,t) +
\angle_{s}(t,q) \geq \pi$ for any point $t$ on $e_{1}$ that differs from $s$.
In particular the following inequality holds:
$d(p,s) + d(s,q) < d(p,t) + d(t,q)$.
\end{lemma}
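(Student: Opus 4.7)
My plan is to establish the angle inequality $\angle_s(p,t) + \angle_s(t,q) \ge \pi$ for every $t \in e_1 \setminus \{s\}$; the distance inequality in the ``in particular'' clause will then follow at once from Lemma \ref{3.41}.

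Following the outline sketched in the paragraph just above the statement, I would prove the stronger intermediate claim that for every pair $(p',q')$ with $p' \in [p_1,p]$ and $q' \in [q_1,q]$,
\[
\angle_s(p',t) + \angle_s(t,q') \;\geq\; \pi.
\]
The case $(p',q') = (p_1,q_1)$ is exactly Lemma \ref{3.7} (with equality), and the desired statement is the specialization $(p',q') = (p,q)$. The indispensable structural input is that $p, p_1, q_1, q$ lie in this order on a single geodesic of the CAT(0) space $U$; consequently, for every admissible $(p',q')$ the six points $p, p', p_1, q_1, q', q$ remain collinear on that geodesic. In particular $p_1$ lies in the interior of $[p',q_1]$ and $q_1$ lies in the interior of $[p_1,q']$, which forces $\angle_{p_1}(p',q_1) = \angle_{q_1}(p_1,q') = \pi$.

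The strategy is then to move the endpoints $(p_1,q_1)$ outward to $(p',q')$ in two symmetric steps and to apply Aleksandrov's Lemma (Lemma \ref{1.1.25}) to comparison triangles obtained by gluing $\triangle(s,t,p_1) \subset \tau_1$ (flat, by Lemma \ref{3.1}) with $\triangle(s,p_1,p')$ along $[s,p_1]$, and likewise $\triangle(s,t,q_1) \subset \tau_2$ with $\triangle(s,q_1,q')$ along $[s,q_1]$. The collinearity $\angle_{p_1}(p',q_1) = \pi$, together with the CAT(0) comparison at $p_1$, pins down the position of $\overline{p'}$ in the unfolded planar picture, and Aleksandrov's Lemma then converts this into a lower bound on the comparison angle at $\overline s$ subtended by $\overline{p'}$ and $\overline t$. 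Summing the estimate obtained from the $p$-side with the symmetric one from the $q$-side, and folding in the reflection equality at $(p_1,q_1)$ from Lemma \ref{3.7}, yields the intermediate claim.

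The main obstacle I anticipate is aligning the directions of the comparison inequalities correctly: the CAT(0) condition delivers \emph{upper} bounds on Alexandrov angles via their $\mathbf{R}^2$-comparison counterparts, whereas what is needed here is a \emph{lower} bound on the sum $\angle_s(p',t) + \angle_s(t,q')$. This is precisely where the collinearity of $p, p_1, q_1, q$ intervenes, through the equalities $\angle_{p_1}(p',q_1) = \angle_{q_1}(p_1,q') = \pi$, to flip Aleksandrov's Lemma in the correct direction. One must also respect, as the paper emphasizes, that every Alexandrov angle lies in $[0,\pi]$, which rules out naive triangle-inequality manipulations that would otherwise overshoot. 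Once the intermediate claim is secured, the specialization $(p',q') = (p,q)$ gives the angle inequality, and Lemma \ref{3.41} immediately yields $d(p,s) + d(s,q) < d(p,t) + d(t,q)$.
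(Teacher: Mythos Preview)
Your overall framework agrees with the paper: both target the intermediate claim that $\angle_s(p',t)+\angle_s(t,q')\ge\pi$ for every $p'\in[p_1,p]$, $q'\in[q_1,q]$, both rely on the fact that $p,p_1,q_1,q$ lie in this order on a single geodesic of the CAT(0) space $U$, and both finish with Lemma~\ref{3.41}. Where you diverge is in the engine for the intermediate claim. The paper does not attempt a one-shot Aleksandrov's-Lemma argument. Instead it runs an iterated limit scheme: it builds sequences $p_n^\ast\in[p,p_1]$ with $p_n^\ast\to p_1$ and $q_n^\ast\in[q,q_1]$ with $q_n^\ast\to q_1$, uses Lemma~\ref{3.43} together with the triangle inequality for Alexandrov angles (and the flatness of $\tau_1,\tau_2$) to show $\angle_s(p_n^\ast,a)+\angle_s(a,q_n^\ast)\to\angle_s(p_1,a)+\angle_s(a,q_1)=\pi$, extracts an index $m_0$ where the sum is $\ge\pi$, and then \emph{repeats} the construction with $(p_{m_0}^\ast,q_{m_0}^\ast)$ in place of $(p_1,q_1)$ to push outward. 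This yields a sequence $(p_{m_k}^\ast,q_{m_k}^\ast)\to(p,q)$ along which the angle sum stays $\ge\pi$, and one more pass to the limit (again Lemma~\ref{3.43} plus the angle triangle inequality) gives the conclusion at $(p,q)$.

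Your direct route has a real gap at the Aleksandrov step. Gluing $\overline\triangle(s,t,p_1)$ to $\overline\triangle(s,p_1,p')$ along $[\overline s,\overline{p_1}]$ places you in the setting of Lemma~\ref{1.1.25}, but that lemma compares the hinge angle at one endpoint of the shared side with the angles at the \emph{outer} vertices $\overline t,\overline{p'}$ in the straightened triangle; in neither orientation does it produce a lower bound on the angle at $\overline s$ subtended by $\overline t$ and $\overline{p'}$, which is what you need. Moreover, the collinearity input $\angle_{p_1}(p',q_1)=\pi$ lives at $p_1$, while CAT(0) only gives $\angle_{p_1}(s,p')\le\overline\angle_{p_1}(s,p')$, the wrong direction for forcing the planar hinge at $\overline{p_1}$ to exceed $\pi$. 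So the sentence ``collinearity flips Aleksandrov's Lemma in the correct direction'' is exactly the missing idea, not a justification of it. Either supply that mechanism explicitly---for instance, a concrete chain of comparisons showing $\angle_s(p',t)\ge\angle_s(p_1,t)$---or adopt the paper's iterated limiting scheme.
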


\begin{proof}\label{3.10}
By Lemma \ref{3.7}, the point $s$ exists, it is unique and it fulfills the following relation
\begin{center}$\angle_{s}(p_{1},t) + \angle_{s}(t,q_{1}) = \pi$\end{center} for any point $t$ on
$e_{1}$ that differs from $s$.
In particular,
\begin{equation}\label{3.700} \angle_{s}(p_{1},a) + \angle_{s}(a,q_{1}) = \pi.\end{equation}

We construct a sequence of points $(p_{n}^{\ast})_{n \in \mathbf{N}}$ such that $p_{0}^{\ast} =
p, p_{n}^{\ast} \in [p,p_{1}],$ \begin{center}$\underset{n \rightarrow \infty} \lim
d(p_{1},p_{n}^{\ast}) = 0$.\end{center}
Lemma \ref{3.43} implies \begin{equation}\label{3.2000}\underset{n \rightarrow \infty} \lim
\angle_{s}(p_{1},p_{n}^{\ast}) =
0.\end{equation}
Similarly, we construct a sequence of points $(q_{n}^{\ast})_{n \in \mathbf{N}}$ such that
$q_{0}^{\ast}
= q, q_{n}^{\ast} \in [q,q_{1}],$ \begin{center}$\underset{n \rightarrow \infty} \lim
d(q_{1},q_{n}^{\ast}) =
0$.\end{center} Lemma \ref{3.43} implies \begin{equation}\label{3.2100}\underset{n \rightarrow
\infty} \lim
\angle_{s}(q_{1},q_{n}^{\ast}) = 0.\end{equation}

\begin{figure}[h]
   \begin{center}
     \includegraphics[height=5cm]{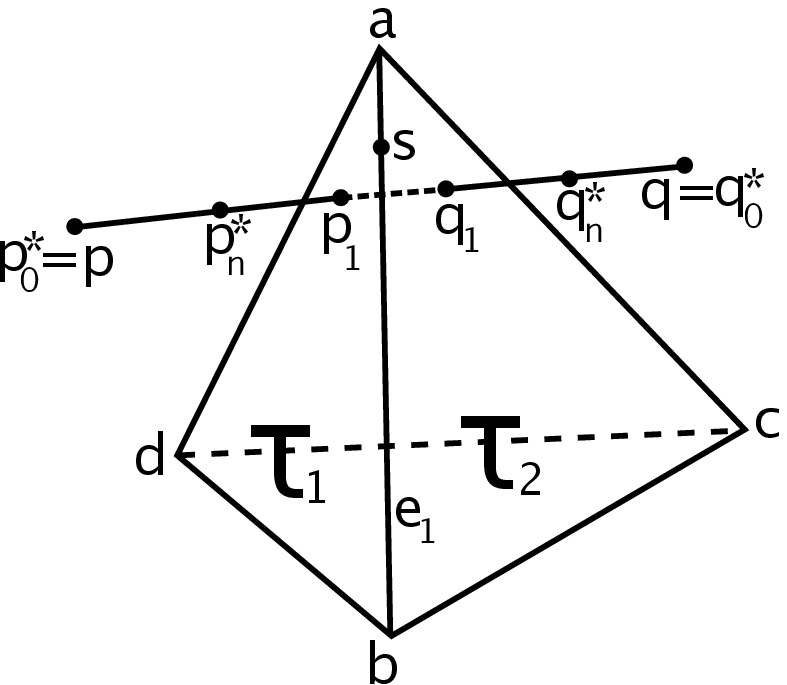}
        \caption{The sequence of points $(p_{n}^{\ast})_{n \in \mathbf{N}}$ on $[p,p_{1}]$\\
The sequence of points $(q_{n}^{\ast})_{n \in \mathbf{N}}$ on $[q,q_{1}]$}
 \end{center}
\end{figure}

Note that, since $p_{n}^{\ast} \in [p,p_{1}]$ and $U$ is a CAT(0) space, we have
\begin{center}$\angle_{p_{n}^{\ast}}(p,p_{1})
= \pi$.\end{center} Also note that \begin{center}$\angle_{s}(b,p_{1}) \leq
\angle_{s}(b,p_{n}^{\ast})$.\end{center} Thus, since \begin{center}$\angle_{s}(b,p_{1}) +
\angle_{s}(p_{1},a) = \pi,$\end{center} while \begin{center}$\angle_{s}(b,p_{n}^{\ast}) +
\angle_{s}(p_{n}^{\ast},a) = \pi$,\end{center} it follows that
\begin{center}$\angle_{s}(p_{n}^{\ast},a) \leq \angle_{s}(p_{1},a)$. \end{center}
Hence, since \begin{center}$\angle_{s}(p_{n}^{\ast},a) \leq
\angle_{s}(p_{1},a) \leq
\angle_{s}(p_{1},p_{n}^{\ast}) +
\angle_{s}(p_{n}^{\ast},a),$\end{center} by  \eqref{3.2000}, we have \begin{center}$\underset{n
\rightarrow \infty} \lim \angle_{s}(p_{n}^{\ast},a) =
\angle_{s}(p_{1},a).$\end{center} Similarly, relation \eqref{3.2100} implies that
\begin{center}$\underset{n
\rightarrow \infty} \lim \angle_{s}(a,q_{n}^{\ast}) =
\angle_{s}(a,q_{1}).$\end{center}
So \begin{equation}\label{3.1800}\underset{n \rightarrow \infty} \lim (\angle_{s}(p_{n}^{\ast},a) +
\angle_{s}(a,q_{n}^{\ast})) =
\angle_{s}(p_{1},a) + \angle_{s}(a,q_{1}).\end{equation}
Suppose that for any $n,$ \begin{center}$\angle_{s}(p_{n}^{\ast},a) +
\angle_{s}(a,q_{n}^{\ast}) < \pi$. \end{center}
The relations \eqref{3.700} and \eqref{3.1800} imply in this case a contradiction. So
there
exists
$m_{0} \in \mathbf{N}$ such that $p_{m_{0}}^{\ast} \in [p_{0}^{\ast},p_{1}]$,
$q_{m_{0}}^{\ast} \in [q_{0}^{\ast},q_{1}]$ and \begin{center}$\angle_{s}(p_{m_{0}}^{\ast},a) +
\angle_{s}(a,q_{m_{0}}^{\ast}) \geq \pi$. \end{center}
\begin{figure}[h]
   \begin{center}
     \includegraphics[height=2cm]{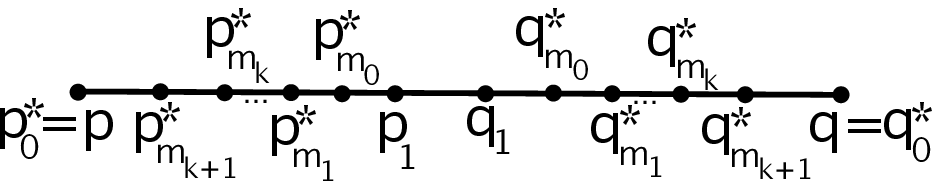}
        \caption{}
 \end{center}
\end{figure}

We argue by induction on $m_{k}$. The case $k=0$ is discussed above.
Replacing the pair $p_{1}, q_{1}$ by the pair
$p_{m_{0}}^{\ast}, q_{m_{0}}^{\ast}$ and arguing as above, it follows that there
exists
$m_{1} \in \mathbf{N}^{\ast}$ such that $p_{m_{1}}^{\ast} \in [p,p_{m_{0}}^{\ast}]$,
$q_{m_{1}}^{\ast} \in [q,q_{m_{0}}^{\ast}]$ and \begin{center}$\angle_{s}(p_{m_{1}}^{\ast},a) +
\angle_{s}(a,q_{m_{1}}^{\ast}) \geq \pi$. \end{center}
We proceed with the second step of the induction. Suppose there exists $m_{k} \in
\mathbf{N}^{\ast}$, $k \in \mathbf{N}^{*}$ such that $p_{m_{k}}^{\ast} \in [p,p_{m_{k-1}}^{\ast}],
q_{m_{k}}^{\ast} \in [q,q_{m_{k-1}}^{\ast}]$ and
\begin{center}$\angle_{s}(p_{m_{k}}^{\ast},a) +
\angle_{s}(a,q_{m_{k}}^{\ast}) \geq \pi.$\end{center}
Replacing the pair $p_{m_{k-1}}^{*}, q_{m_{k-1}}^{*}$ by the pair $p_{m_{k}}^{*},
q_{m_{k}}^{*}$ and arguing as for the case $k=0$, it similarly follows that there exists $m_{k+1}
\in
\mathbf{N}^{\ast}$
such
that $p_{m_{k+1}}^{\ast} \in [p,p_{m_{k}}^{\ast}], q_{m_{k+1}}^{\ast} \in [q,q_{m_{k}}^{\ast}]$ and
\begin{equation}\label{3.1500}\angle_{s}(p_{m_{k+1}}^{\ast},a) +
\angle_{s}(a,q_{m_{k+1}}^{\ast}) \geq \pi\end{equation}which concludes the second step of the
induction.

Note that, since $(p_{m_{k}}^{\ast})_{k \in \mathbf{N}}$ is a
sequence of points on $[p,p_{1}]$ such that $p_{m_{k}}^{\ast} \in [p,p_{m_{k-1}}^{\ast}]$, we have
\begin{center}$\underset{k
\rightarrow \infty} \lim
d(p_{m_{k}}^{\ast},p) =
0.$ \end{center}
Similarly note that $(q_{m_{k}}^{\ast})_{k \in \mathbf{N}}$ is a
sequence of points on $[q,q_{1}]$ such that \begin{center} $\underset{k \rightarrow \infty} \lim
d(q_{m_{k}}^{\ast},q) =
0.$ \end{center} Lemma \ref{3.43} further implies that
\begin{equation}\label{3.2200}\underset{k  \rightarrow \infty} \lim
\angle_{s}(p_{m_{k}}^{\ast},p) =
0 \end{equation} and \begin{equation}\label{3.2300} \underset{k \rightarrow \infty} \lim
\angle_{s}(q_{m_{k}}^{\ast},q) =
0. \end{equation}
Note that \begin{center}$\angle_{s}(p_{m_{k}}^{\ast},a) \leq \angle_{s}(p_{m_{k}}^{\ast},p) +
\angle_{s}(p,a)$,\end{center} while \begin{center}$\angle_{s}(q_{m_{k}}^{\ast},a) \leq
\angle_{s}(q_{m_{k}}^{\ast},q) +
\angle_{s}(q,a)$.\end{center}
So, by \eqref{3.2200} and \eqref{3.2300}, we get
\begin{center}$\underset{k \rightarrow \infty} \lim(\angle_{s}(p_{m_{k}}^{\ast},a) +
\angle_{s}(a,q_{m_{k}}^{\ast}))  \leq \angle_{s}(p,a) +
\angle_{s}(a,q)$.\end{center} Hence, by \eqref{3.1500}, \begin{center}$\angle_{s}(p,a) +
\angle_{s}(a,q) \geq \pi$.\end{center} Arguing similarly, one can show that
\begin{center}$\angle_{s}(p,t) +
\angle_{s}(t,q) \geq \pi$\end{center} for any $t$ on $e_{1}$ that differs from $s$. Lemma \ref{3.41}
ensures that \begin{center}$d(p,s) +
d(s,q) < d(p,t) + d(t,q).$\end{center}

\end{proof}

\begin{remark}
Note that in the proof of the previous lemma, the fact that the point $t$ lies on the edge $e_{1}$
does not influence the proof in
any way. So a similar result holds for the case when $t \in |U|, t \notin e_{1}$.
Moreover,
according to the hypothesis of Lemma \ref{3.9}, $s$ is the unique point on $e_{1}$ such that
$\angle_{s}(a,p_{1}) =
\angle_{s}(b,q_{1})$ and $\angle_{s}(a,q_{1}) = \angle_{s}(b,p_{1})$ and hence $\angle_{s}(p_{1},t)
+ \angle_{s}(t,q_{1}) =
\pi$. Note that a slightly modified
hypothesis in Lemma \ref{3.9},
namely $\angle_{s}(p_{1},t) + \angle_{s}(t,q_{1})
\geq \pi$, would imply the same result. Furthermore, note that the particular choice of the point $s$
does not influence the proof of Lemma \ref{3.9} either. Hence, for any $l \in e_{1}$ for whom
$\angle_{l}(p_{1},t) + \angle_{l}(t,q_{1}) \geq
\pi$ holds, Lemma \ref{3.9}
ensures the following
corollary.

\end{remark}

\begin{corollaryA}
For any $t \in |U|$ and for any $l \in e_{1},$ if $\angle_{l}(p_{1},t) + \angle_{l}(t,q_{1}) \geq
\pi$ then $\angle_{l}(p,t) + \angle_{l}(t,q) \geq
\pi$.
\end{corollaryA}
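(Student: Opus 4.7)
The plan is to mimic, almost verbatim, the argument of Lemma \ref{3.9}, exploiting the two observations already recorded in the preceding remark: the point $t$ in that argument was never required to lie on $e_{1}$, and the only feature of $s$ that was really used was the angle inequality at $s$. So fix an arbitrary $l \in e_{1}$ and an arbitrary $t \in |U|$ with $\angle_{l}(p_{1},t) + \angle_{l}(t,q_{1}) \geq \pi$, and aim to deduce $\angle_{l}(p,t) + \angle_{l}(t,q) \geq \pi$.

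First, I would construct the same two approximating sequences $(p_{n}^{\ast})_{n \in \mathbf{N}} \subset [p,p_{1}]$ and $(q_{n}^{\ast})_{n \in \mathbf{N}} \subset [q,q_{1}]$ used in Lemma \ref{3.9}, with $p_{0}^{\ast}=p$, $q_{0}^{\ast}=q$ and $d(p_{1},p_{n}^{\ast}) \to 0$, $d(q_{1},q_{n}^{\ast}) \to 0$. By Lemma \ref{3.43}, $\angle_{l}(p_{1},p_{n}^{\ast}) \to 0$ and $\angle_{l}(q_{1},q_{n}^{\ast}) \to 0$. Since $p_{n}^{\ast} \in [p,p_{1}]$ gives $\angle_{p_{n}^{\ast}}(p,p_{1}) = \pi$, the same monotonicity/sandwich argument as in Lemma \ref{3.9} yields
\begin{equation*}
\underset{n \rightarrow \infty}{\lim}\bigl(\angle_{l}(p_{n}^{\ast},t) + \angle_{l}(t,q_{n}^{\ast})\bigr) = \angle_{l}(p_{1},t) + \angle_{l}(t,q_{1}) \geq \pi.
\end{equation*}
If the strict inequality $\angle_{l}(p_{n}^{\ast},t) + \angle_{l}(t,q_{n}^{\ast}) < \pi$ held for every $n$, then by taking $\pi$ on the right-hand side and letting $n \to \infty$ we would not contradict the hypothesis immediately; but as in Lemma \ref{3.9} one obtains a contradiction by using instead the strict form of the hypothesis together with the approximation, or more simply by the following step.

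Next, I would run the same inductive construction as in the proof of Lemma \ref{3.9}: the above limit produces some index $m_{0}$ with $\angle_{l}(p_{m_{0}}^{\ast},t) + \angle_{l}(t,q_{m_{0}}^{\ast}) \geq \pi$, and replacing the pair $(p_{1},q_{1})$ by $(p_{m_{0}}^{\ast},q_{m_{0}}^{\ast})$ and iterating yields sequences $p_{m_{k}}^{\ast} \in [p,p_{m_{k-1}}^{\ast}]$ and $q_{m_{k}}^{\ast} \in [q,q_{m_{k-1}}^{\ast}]$ with
\begin{equation*}
\angle_{l}(p_{m_{k}}^{\ast},t) + \angle_{l}(t,q_{m_{k}}^{\ast}) \geq \pi \quad \text{for every } k.
\end{equation*}
Since $p_{m_{k}}^{\ast} \to p$ and $q_{m_{k}}^{\ast} \to q$, Lemma \ref{3.43} gives $\angle_{l}(p_{m_{k}}^{\ast},p) \to 0$ and $\angle_{l}(q_{m_{k}}^{\ast},q) \to 0$. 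Applying the triangle inequality for Alexandrov angles at $l$,
\begin{equation*}
\angle_{l}(p_{m_{k}}^{\ast},t) \leq \angle_{l}(p_{m_{k}}^{\ast},p) + \angle_{l}(p,t), \qquad \angle_{l}(t,q_{m_{k}}^{\ast}) \leq \angle_{l}(t,q) + \angle_{l}(q,q_{m_{k}}^{\ast}),
\end{equation*}
and passing to the limit in the inequality $\angle_{l}(p_{m_{k}}^{\ast},t) + \angle_{l}(t,q_{m_{k}}^{\ast}) \geq \pi$ yields
\begin{equation*}
\angle_{l}(p,t) + \angle_{l}(t,q) \geq \pi,
\end{equation*}
which is the claim.

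The main obstacle is really a bookkeeping one: verifying that nowhere in the reproduced argument do we use $t \in e_{1}$ or the specific defining property of $s$ from Lemma \ref{3.7}; all that is needed is the single angle inequality $\angle_{l}(p_{1},t) + \angle_{l}(t,q_{1}) \geq \pi$ at the base point $l \in e_{1}$, together with the CAT(0) behaviour of angles (Lemma \ref{3.43}) and the triangle inequality for Alexandrov angles. Once that is checked, the corollary is immediate.
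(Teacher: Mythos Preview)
Your proposal is correct and takes essentially the same approach as the paper: the paper's argument for Corollary~A is precisely the preceding Remark, which observes that the proof of Lemma~\ref{3.9} goes through verbatim once $s$ is replaced by an arbitrary $l\in e_{1}$, the equality $\angle_{s}(p_{1},t)+\angle_{s}(t,q_{1})=\pi$ is weakened to $\geq\pi$, and $t$ is allowed to range over $|U|$. You have simply spelled out that re-run of Lemma~\ref{3.9} explicitly, so there is nothing to add.
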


We summarize the basic ideas behind the proof of the above results.
For any $t \in |U|$ and for any $l \in e_{1},$ the inequality $\angle_{l}(p_{1},t) +
\angle_{l}(t,q_{1}) \geq
\pi$ is fulfilled by the pair of points $p_{1},q_{1}$ due to the fact that
such points lie on $2$-simplices that are isometric to their comparison triangles in Euclidean
plane. Furthermore,
such inequality is inherited by those pair of points on $[p,q]$ for whom one point of the pair lies
on $[p,p_{1}]$, while the other point lies on $[q,q_{1}]$.

\begin{lemma}\label{3.11}
Let $c: [0,1] \rightarrow U$ be a path in $U$ joining $p$ to
$q$ that does not intersect $\sigma$. Then there exists a point $m$ on $c$ such
that neither the segment $[p,m]$ in $U$ nor the segment $[q,m]$ in $U$ intersects the interior of
$\sigma$.
\end{lemma}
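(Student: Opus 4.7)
The plan is a one-dimensional continuity (intermediate-value) argument on the parameter $t\in[0,1]$. Introduce
\[
A:=\{t\in[0,1]:[p,c(t)]\cap\mathrm{int}(\sigma)=\emptyset\},\qquad B:=\{t\in[0,1]:[q,c(t)]\cap\mathrm{int}(\sigma)=\emptyset\};
\]
the aim is to find $t\in A\cap B$ and take $m:=c(t)$. Both $A$ and $B$ are closed in $[0,1]$: in the CAT(0) ball $U$ the map $t\mapsto[p,c(t)]$ is continuous in the Hausdorff topology (because geodesics are unique and vary continuously with their endpoints), while $\mathrm{int}(\sigma)$ is open, so the complements $A^{c},B^{c}$ are open. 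By the hypothesis that $[p,q]$ meets $\mathrm{int}(\sigma)$, combined with $p,q\notin\sigma$, we have $0\in A\setminus B$ and $1\in B\setminus A$.

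Next set $\tau:=\sup\{t:[0,t]\subset A\}$ and $\rho:=\inf\{t:[t,1]\subset B\}$. Closedness of $A$ and $B$ gives $[0,\tau]\subset A$ and $[\rho,1]\subset B$, with $\tau<1$ and $\rho>0$. If $\tau\geq\rho$ then $\tau\in[0,\tau]\cap[\rho,1]\subset A\cap B$, and $m:=c(\tau)$ proves the lemma. Hence only the case $\tau<\rho$ remains. In that case, if some $t\in(\tau,\rho)$ belongs to $A\cap B$ we are done; otherwise the open sets $A^{c},B^{c}$ cover the connected interval $(\tau,\rho)$, both meet $(\tau,\rho)$ nontrivially (by the very definitions of $\tau$ and $\rho$), and connectedness forces a point $t^{\star}\in A^{c}\cap B^{c}\cap(\tau,\rho)$. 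Writing $m^{\star}:=c(t^{\star})\notin\sigma$, both geodesic segments $[p,m^{\star}]$ and $[q,m^{\star}]$ then meet $\mathrm{int}(\sigma)$.

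The main obstacle is to derive a contradiction from this last configuration. My plan is to use Lemma \ref{3.1} to work inside the $2$-faces of $\sigma$, which are isometric to Euclidean triangles, and to compare the geodesic triangle $\triangle(p,m^{\star},q)$ (all three of whose sides cross $\mathrm{int}(\sigma)$) with its Euclidean comparison triangle. Applying Lemma \ref{3.41} to suitable triples involving the entry and exit points of the segments $[p,m^{\star}]$ and $[q,m^{\star}]$ on the faces $\tau_{1},\tau_{2},\tau_{3}$, together with the thinness of CAT(0) triangles in $U$, I would aim to construct a shorter broken path from $p$ to $q$ that reroutes through a nearby point of $c$ while still avoiding $\mathrm{int}(\sigma)$ --- exhibiting an actual element of $A\cap B$ arbitrarily close to $t^{\star}$ against the assumption. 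This local reshaping, which hinges on the flatness of the $2$-faces of $\sigma$ and the CAT(0) $4$-point condition, is the step I expect to be hardest.
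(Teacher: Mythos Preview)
Your topological setup is clean and correct: the sets $A$ and $B$ are indeed closed (continuous variation of geodesics in the CAT(0) ball $U$), and the connectedness argument on $(\tau,\rho)$ does produce a parameter $t^{\star}$ with $t^{\star}\in A^{c}\cap B^{c}$ if $A\cap B=\emptyset$. Up to this point you and the paper are doing essentially the same intermediate--value reduction.

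The gap is the last paragraph. Having a single point $m^{\star}=c(t^{\star})$ for which all three sides of $\triangle(p,q,m^{\star})$ cross $\mathrm{int}(\sigma)$ is \emph{not} in itself a contradiction; in a CAT(0) $3$--complex it is easy to place three points outside a tetrahedron so that every pairwise geodesic enters it. Your sketch (``construct a shorter broken path \dots\ exhibiting an actual element of $A\cap B$ arbitrarily close to $t^{\star}$'') is circular: producing a nearby element of $A\cap B$ is exactly the statement you are trying to prove, and nothing in Lemma~\ref{3.1} or Lemma~\ref{3.41} gives that for free. The flatness of the $2$--faces tells you about angles \emph{inside} $\tau_{1},\tau_{2},\tau_{3}$, but it does not by itself control where the geodesics $[p,c(t)]$ and $[q,c(t)]$ enter or leave $\sigma$ as $t$ varies near $t^{\star}$. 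So as written the proof stops precisely at the hard step.

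The paper's argument differs at exactly this point. Rather than isolating one parameter in $A^{c}\cap B^{c}$, it bisects to obtain sequences $s_{n}'\in A\cap B^{c}$ and $s_{n}''\in A^{c}$ with $d(s_{n}',s_{n}'')\to 0$, and then looks at the entry/exit points $p_{n}',q_{n}'$ (resp.\ $p_{n}'',q_{n}''$) of the corresponding geodesics on the faces $\tau_{1},\tau_{2}$. The contradiction comes from the CAT(0) $4$--point condition applied to the $4$--tuple $(p_{n}',q_{n}',s_{n}',s_{n}'')$: the subembedding in $\mathbf{R}^{2}$ forces $d(p_{n}',q_{n}')\to 0$ (convex case) or $d(s_{n}',p_{n}')\to 0$ (non--convex case, via Alexandrov's Lemma), both of which are impossible because $p_{n}',q_{n}'$ lie in the interiors of distinct $2$--faces of $\sigma$, and $s_{n}'$ lies on the path $c$ which avoids $\sigma$ entirely. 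The essential feature you are missing is this \emph{limiting} pair of parameters of opposite types; passing to a single $t^{\star}$ discards the information ($d(s_{n}',s_{n}'')\to 0$) that drives the quadrilateral comparison. If you want to salvage your approach, you should not look at $t^{\star}$ alone but at nearby parameters on either side of it where the ``type'' switches, and then run the $4$--point comparison there.
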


\begin{proof}\label{3.12}

We call the points $c(t)$ such that the segment $[q,c(t)]$
intersects the interior of $\sigma$ and the segment $[p,c(t)]$ does not intersect
the interior of $\sigma$, $t \in [0,1]$, points of type I.

We call the points $c(t)$ such that the segment $[p,c(t)]$
intersects the interior of $\sigma$, points of type II. Notice that if $c(t)$ is a
point of type II, $t \in [0,1]$, then the segment $[q,c(t)]$ might also intersect
the interior of $\sigma$.

We call the points $c(t)$ such that the segments $[q,c(t)]$ and
$[p,c(t)]$ do not intersect the interior of $\sigma$, $t \in [0,1]$, points of type III.

Suppose that there are no points of type III on the path $c$. Any point on
$c$ is therefore either a point of type I or a point of type
II. Thus, for any $t \in [0,1]$, at least one of the segments
$[q,c(t)]$ and $[p,c(t)]$ intersects the interior of $\sigma$.

Considering $(s'_{0}, s''_{0})=(p, q)$,
we define as in Lemma \ref{3.7} a
sequence $(s_{n})_{n \in \mathbf{N}} = (s'_{n}, s''_{n})$ of tuples such that
$s_{n}'$ is a point of type I on $c$ whereas $s_{n}''$ is a point of type II on $c$, $n \geq
1$. Assume that the position of $s_{n}'$ with respect to $s_{n}''$ on the path $c$ is as in the
figure below. Arguments similar to those in Lemma \ref{3.7} ensure that
\begin{equation}\label{3.1000}
\underset{n \rightarrow \infty} \lim   d(s_{n}', s_{n}'') = 0.
\end{equation}

We denote by $p_{n}'$ the intersection point of $[p,s_{n}']$ with $\tau_{1}$, and by $q_{n}'$ the
intersection point of $[p,s_{n}']$ with $\tau_{2}$. We denote by $p_{n}''$ the intersection point
of $[q,s_{n}'']$ with $\tau_{1}$, and by $q_{n}''$ the
intersection point of $[q,s_{n}'']$ with $\tau_{2}$.

\begin{figure}[h]
   \begin{center}
     \includegraphics[height=5cm]{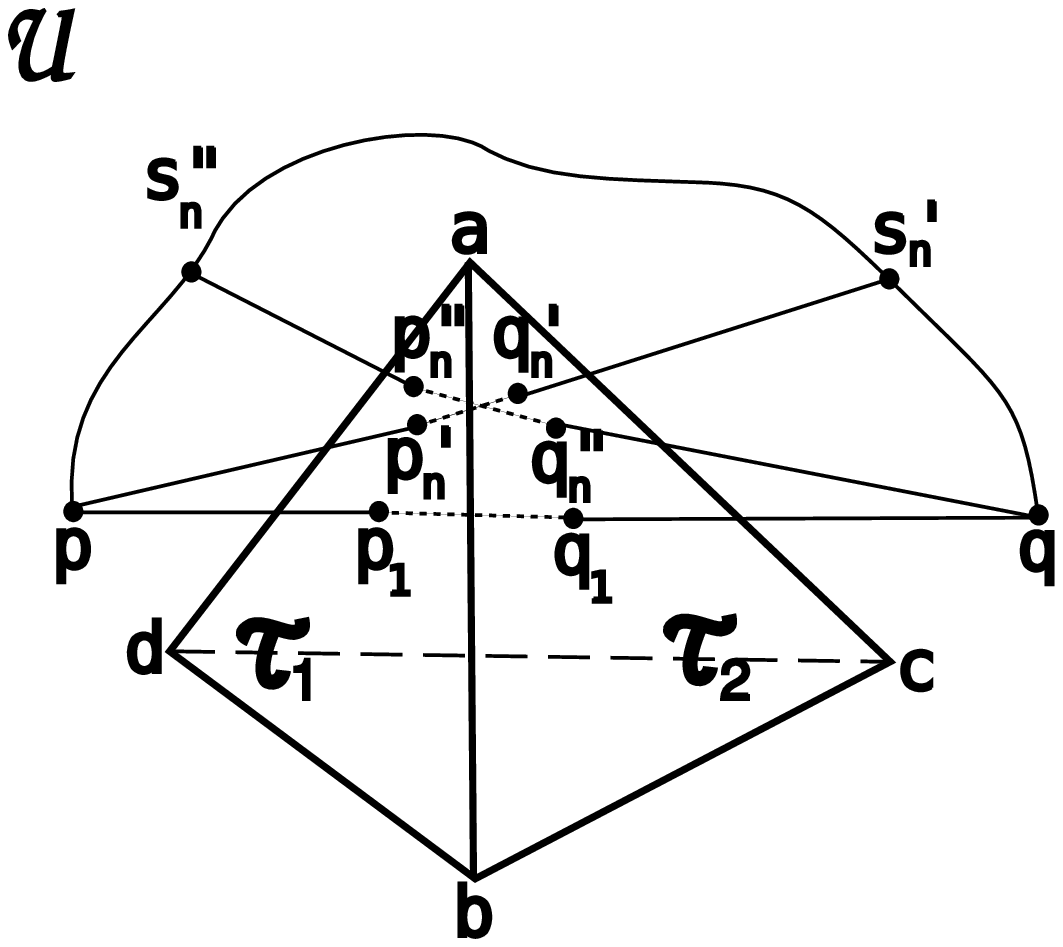}
        \caption{The path $c$ that connects $p$ to $q$ in $U$ without intersecting
$\sigma$\\$s_{n}'$ is a point of type I on $c$\\$s_{n}''$ is a point of type II on $c$}
 \end{center}
\end{figure}

Because $U$ is complete and a CAT(0) space, it satisfies the CAT(0) $4-$point condition. So the
$4$-tuple of points $(p_{n}', q_{n}',s_{n}', s_{n}'')$ in $U$ has a subembedding
$(\overline{p}_{n}', \overline{q}_{n}',\overline{s}_{n}', \overline{s}_{n}'')$ in
$\mathbf{R}^{2}$. Thus \begin{equation}\label{3.1400} d(s_{n}', p_{n}') \leq
d_{\mathbf{R}^{2}}(\overline{s}_{n}',
\overline{p}_{n}'). \end{equation}
Let $Q$ denote the quadrilateral in $\mathbf{R}^{2}$spanned by the vertices $\overline{p}_{n}',
\overline{q}_{n}',\overline{s}_{n}'$ and $\overline{s}_{n}''$.

\begin{figure}[h]
   \begin{center}
     \includegraphics[height=2.9cm]{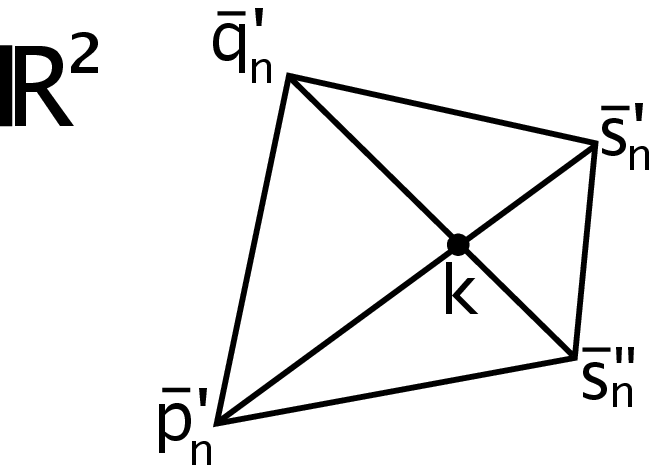}
 \caption{$Q$ is convex}
 \end{center}
\end{figure}

Suppose first that $Q$ is convex. Let $k$ denote the intersection point of its diagonals.
By \eqref{3.1000},
\begin{center}$\underset{n \rightarrow \infty} \lim d_{\mathbf{R}^{2}}(\overline{s}_{n}',
\overline{s}_{n}'') =
0.$\end{center}
Thus \begin{center}$\underset{n \rightarrow \infty} \lim \overline{\angle}_{k}(s_{n}', s_{n}'') =
0$\end{center}
and therefore
\begin{center}$\underset{n \rightarrow \infty} \lim \overline{\angle}_{k}(p_{n}', q_{n}') =
0.$\end{center}
So
\begin{center}$\underset{n \rightarrow \infty} \lim d_{\mathbf{R}^{2}}(\overline{p}_{n}',
\overline{q}_{n}') =
0$\end{center}
and hence
\begin{equation}\label{3.1100}\underset{n \rightarrow \infty} \lim d (p_{n}', q_{n}') =
0.\end{equation}

On the other hand, the points $p_{n}'$ and $q_{n}'$ belong to the interior of some
$2$-simplices in $U$ both isometric to their comparison triangles in $\mathbf{R}^{2}$ and which do
not coincide. So
\begin{center}$\underset{n
\rightarrow \infty} \lim d (p_{n}', q_{n}') \neq 0$.\end{center} Hence, by \eqref{3.1100}, we have
reached a
contradiction. There exists therefore a point $m$ on the path $c$ such that neither the segment
$[p,m]$ in
$U$ nor the segment $[q,m]$ in $U$ intersects the interior of $\sigma$.

\begin{figure}[h]
   \begin{center}
     \includegraphics[height=3.8cm]{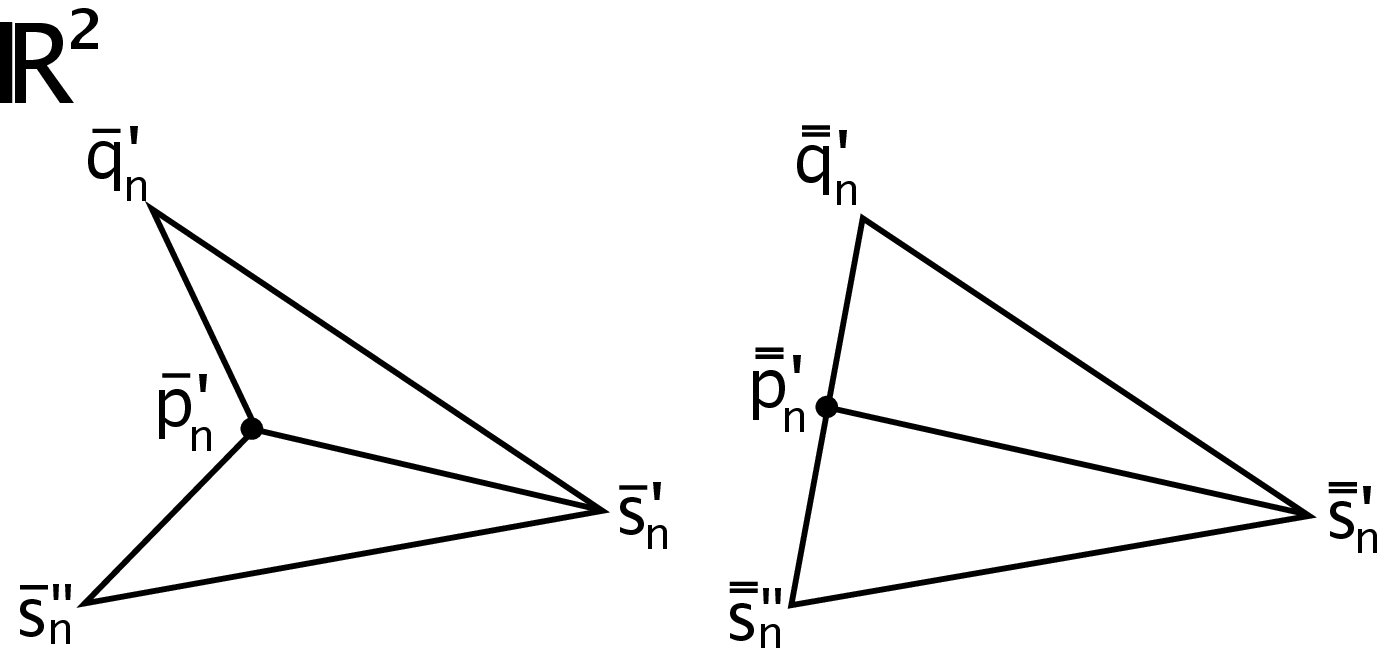}
 \caption{$Q$ is not convex\\$\overline{p}_{n}'$ lies
in the convex
hull of the other three vertices of $Q$}
 \end{center}
\end{figure}

We analyze further the case when $Q$ is not convex. Suppose $\overline{p}_{n}'$ is the vertex of $Q$
in the convex
hull of the other three vertices of $Q$ (see the figure above). The other three cases can be
handeled similarly.
Let $\triangle(\overline{\overline{s}}_{n}',\overline{\overline{q}}_{n}',\overline{\overline{s}}_{n}
'')$ be a geodesic triangle in $\mathbf{R}^{2}$ whose side lengths are equal to
$d_{\mathbf{R}^{2}}(\overline{s}_{n}', \overline{q}_{n}'), d_{\mathbf{R}^{2}}(\overline{q}_{n}',
\overline{p}_{n}') + d_{\mathbf{R}^{2}}(\overline{p}_{n}',
\overline{s}_{n}'')$ and $d_{\mathbf{R}^{2}}(\overline{s}_{n}'', \overline{s}_{n}')$,
respectively.
Let $\overline{\overline{p}}_{n}'$ be a point on
$[\overline{\overline{q}}_{n}',\overline{\overline{s}}_{n}'']$ such that
$d_{\mathbf{R}^{2}}(\overline{\overline{p}}_{n}',
\overline{\overline{q}}_{n}') = d_{\mathbf{R}^{2}}(\overline{p}_{n}',
\overline{q}_{n}')$.
Because $d(s_{n}', s_{n}'') = d_{\mathbf{R}^{2}}(\overline{s}_{n}',
\overline{s}_{n}'') =
d_{\mathbf{R}^{2}}(\overline{\overline{s}}_{n}',
\overline{\overline{s}}_{n}'')$, by
\eqref{3.1000}, we have \begin{center}$\underset{n \rightarrow \infty} \lim
d_{\mathbf{R}^{2}}(\overline{\overline{s}}_{n}',
\overline{\overline{s}}_{n}'') = 0$.\end{center}So
\begin{center}$\underset{n \rightarrow \infty}
\lim \angle_{\overline{\overline{q}}_{n}'}(\overline{\overline{s}}_{n}',
\overline{\overline{s}}_{n}'') = 0.$\end{center}
Hence \begin{equation}\label{3.1200}
\underset{n \rightarrow \infty}
\lim d_{\mathbf{R}^{2}}(\overline{\overline{s}}_{n}',
\overline{\overline{p}}_{n}') = 0.\end{equation}
Note that $\angle_{\overline{\overline{p}}_{n}'}(\overline{\overline{q}}_{n}',
\overline{\overline{s}}_{n}') +
\angle_{\overline{\overline{p}}_{n}'}(\overline{\overline{s}}_{n}',
\overline{\overline{s}}_{n}'') = \pi$, while $\angle_{\overline{p}_{n}'}(\overline{q}_{n}', \overline{s}_{n}') + \angle_{\overline{p}_{n}'}(\overline{s}_{n}', \overline{s}_{n}'') > \pi$
(this inequality holds because $Q$ is not convex and $\overline{p_{n}}'$ lies in the interior of the convex hull of the other three vertices of $Q$). Hence Alexandrov's Lemma implies
\begin{center}$d_{\mathbf{R}^{2}}(\overline{s}_{n}',
\overline{p}_{n}') \leq d_{\mathbf{R}^{2}}(\overline{\overline{s}}_{n}',
\overline{\overline{p}}_{n}')$.\end{center} Relation \eqref{3.1200} therefore ensures
\begin{center}
$\underset{n \rightarrow \infty}
\lim d_{\mathbf{R}^{2}}(\overline{s}_{n}',
\overline{p}_{n}') = 0.$ \end{center}
Thus, by \eqref{3.1400}, \begin{equation}\label{3.1300}
\underset{n \rightarrow \infty}
\lim d(s_{n}', p_{n}') = 0. \end{equation}

On the other hand, the point $p_{n}'$ lies in the interior of a $2$-dimensional face of $\sigma$
whereas the
point $s_{n}'$ lies on a path that does not intersect $\sigma$. Hence
\begin{center}$\underset{n \rightarrow \infty}
\lim d (s_{n}', p_{n}') \neq 0$\end{center} which implies, by \eqref{3.1300}, a
contradiction. There exists therefore a point $m$ on the path $c$ such that neither the segment
$[p,m]$ in $U$ nor the segment $[q,m]$ in $U$ intersects the interior of $\sigma$.

\end{proof}

\begin{lemma}\label{3.15}
Let $s$ be a point on $e_{1}$ such that $\angle_{s}(a,p_{1}) =
\angle_{s}(b,q_{1})$ and $\angle_{s}(a,q_{1}) = \angle_{s}(b,p_{1})$.
Let $c: [0,1] \rightarrow U$ be a path in $U$ joining $p$ to
$q$ that does not intersect $\sigma$. Let $m$ be a point on $c$ such that neither the segment
$[p,m]$ in $U$ nor the segment $[q,m]$ in $U$ intersects the interior of $\sigma$. Then, the
following inequality holds in $U'$: $d'(p,s) + d'(s,q) < d'(p,m) + d'(m,q)$.
\end{lemma}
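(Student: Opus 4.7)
My strategy is to reduce the $U'$-inequality to the corresponding inequality in $U$, and then derive the latter from Lemma~\ref{3.41} via the angle condition supplied by Corollary~A.

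First, each of the four distances in the inequality agrees with its $U$-counterpart. By the choice of $m$ from Lemma~\ref{3.11}, the $U$-geodesics $[p,m]$ and $[m,q]$ avoid $\mathrm{int}(\sigma)$, hence lie in $U'$, so $d'(p,m)=d(p,m)$ and $d'(m,q)=d(m,q)$. For the $s$-distances, I claim that the $U$-geodesics $[p,s]$ and $[s,q]$ likewise avoid $\mathrm{int}(\sigma)$: $s\in e_{1}\subset\partial\sigma$ lies on the common edge of $\tau_{1}$ and $\tau_{2}$, and since $p$ and $q$ sit respectively on the $\tau_{1}$- and $\tau_{2}$-sides of $\sigma$ (because $[p,q]$ crosses $\mathrm{int}(\sigma)$ between $p_{1}\in\mathrm{int}(\tau_{1})$ and $q_{1}\in\mathrm{int}(\tau_{2})$), the shortest paths from $p$ and from $q$ to $s$ stay on the corresponding exterior sides of $\sigma$. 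Hence $d'(p,s)=d(p,s)$ and $d'(s,q)=d(s,q)$, and the target inequality reduces to $d(p,s)+d(s,q)<d(p,m)+d(m,q)$ in the ambient CAT(0) space $U$.

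Second, to derive this $U$-inequality from Lemma~\ref{3.41} I need $\angle_{s}(p,m)+\angle_{s}(m,q)\geq\pi$. By Corollary~A applied with $l=s$ and $t=m$, this reduces further to the bound $\angle_{s}(p_{1},m)+\angle_{s}(m,q_{1})\geq\pi$. The plan for proving this bound is to mimic the sequence-and-limit argument from the proof of Lemma~\ref{3.9}: construct a sequence of points on the path $c$ approaching $m$, use Lemma~\ref{3.43} to track the Alexandrov angles at $s$ in the limit, and invoke the angle identities of Lemma~\ref{3.7} at the corresponding points of the sequence to propagate the inequality. The visibility property of $m$ (both $[p,m]$ and $[m,q]$ avoiding $\mathrm{int}(\sigma)$) is what ensures the approximating points inherit enough structure for these identities to apply.

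The main obstacle is precisely this last step---converting the visibility constraint on $m$ into the angle bound $\angle_{s}(p_{1},m)+\angle_{s}(m,q_{1})\geq\pi$ in the CAT(1) link of $s$. The angle identities from Lemma~\ref{3.7} place $[p_{1}]$ and $[q_{1}]$ on length-$\pi$ arcs from $[a]$ to $[b]$ through the flat 2-simplices $\tau_{1},\tau_{2}$ (flatness by Lemma~\ref{3.1}), and the visibility hypothesis ought to confine $[m]$ to a region of the link where the triangle inequality forces the required bound; the execution of this link-level analysis is the heart of the argument. Once this bound is secured, Corollary~A gives $\angle_{s}(p,m)+\angle_{s}(m,q)\geq\pi$, Lemma~\ref{3.41} then supplies the $U$-inequality $d(p,s)+d(s,q)<d(p,m)+d(m,q)$, and the reductions of the first paragraph deliver the desired $U'$-inequality $d'(p,s)+d'(s,q)<d'(p,m)+d'(m,q)$.
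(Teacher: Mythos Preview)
Your overall strategy---reduce to the $U$-inequality, then derive it from Lemma~\ref{3.41} via an angle bound at $s$, invoking Corollary~A to pass from $p_{1},q_{1}$ to $p,q$---matches the paper's approach exactly, as does the final transfer to $U'$ by the avoidance observation.

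The gap is precisely the step you flag as the ``main obstacle'': establishing $\angle_{s}(p_{1},m)+\angle_{s}(m,q_{1})\geq\pi$. You propose to fill it either by a sequence-and-limit argument in the style of Lemma~\ref{3.9} or by a CAT(1) link analysis, but you do not carry either out, and neither is what the paper does. The paper works at an arbitrary point $l\in e_{1}$ (not only $s$) and exploits the elementary collinearity identity
\[
\angle_{l}(a,m)+\angle_{l}(m,b)=\pi,
\]
which holds because $a,l,b$ lie in order on the edge $e_{1}$. It then compares the two figures $\Gamma_{1}=\triangle(a,m,l)\cup\triangle(b,m,l)$ and $\Gamma_{2}=\triangle(p_{1},m,l)\cup\triangle(q_{1},m,l)$: $\Gamma_{1}$ meets $\partial\sigma$ only along $e_{1}$, whereas $\Gamma_{2}$ meets $\partial\sigma$ through the interiors of the faces $\tau_{1}$ and $\tau_{2}$ (since $p_{1}\in\mathrm{int}\,\tau_{1}$, $q_{1}\in\mathrm{int}\,\tau_{2}$). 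This configuration forces $\angle_{l}(p_{1},m)+\angle_{l}(m,q_{1})>\pi$. From there Corollary~A gives $\angle_{l}(p,m)+\angle_{l}(m,q)>\pi$, Lemma~\ref{3.41} yields $d(p,l)+d(l,q)<d(p,m)+d(m,q)$ for every $l\in e_{1}$, and taking $l=s$ (together with Lemma~\ref{3.9}) finishes. So you have the correct skeleton but are missing the paper's key device: the collinearity identity at $l\in e_{1}$ combined with the $\Gamma_{1}/\Gamma_{2}$ comparison, rather than any limiting or link-geometry argument.
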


\begin{proof}\label{3.16}
By Lemma \ref{3.7} and \ref{3.11}, such points $s$ and $m$ exist. Moreover, such point $s$ is
unique. Let $l$ be some point on $e_{1}$.

\begin{figure}[h]
   \begin{center}
     \includegraphics[height=5cm]{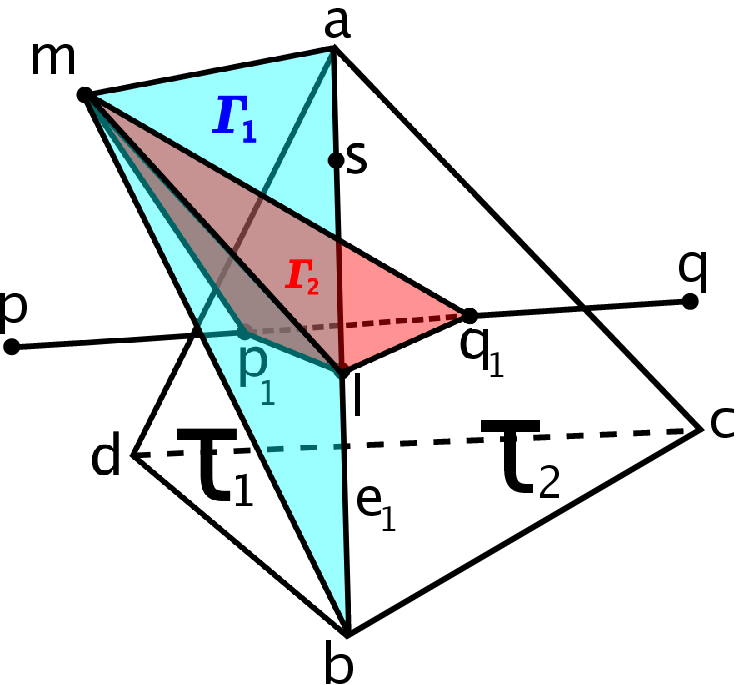}
 \caption{$\varGamma_{1}$ and $\varGamma_{2}$}
 \end{center}
\end{figure}
We denote by $\varGamma_{1}$ the union of the geodesic triangles $\triangle(a,m,l)$ and
$\triangle(b,m,l)$. Note that $\varGamma_{1}$ intersects the boundary of $\sigma$ along the edge
$e_{1} = [a,b]$, i.e. along one common boundary edge $\tau_{1}$ and $\tau_{2}$ of
(which are two $2$-dimensional faces of $\sigma$).
Further, note that
\begin{equation}\label{3.1700}\angle_{l}(a,m) + \angle_{l}(m,b) = \pi.\end{equation}

We denote by $\varGamma_{2}$ the union of the geodesic triangles $\triangle(p_{1},m,l)$ and
$\triangle(q_{1},m,l)$. Note that $\varGamma_{2}$ intersects the boundary of $\sigma$ along two
sides of the geodesic
triangle $\triangle(p_{1},q_{1},l)$, i.e. along the interior of $\tau_{1}$ and $\tau_{2}$ which are also two $2$-dimensional faces of
$\sigma$. Hence relation
\eqref{3.1700} guarantees that \begin{center}$\angle_{l}(p_{1},m) +
\angle_{l}(m,q_{1}) > \pi$.\end{center}
Because $l \in e_{1}$ while $m \in |U|, m \notin e_{1}$,
Corollary A ensures that \begin{center}$\angle_{l}(p,m)
+
\angle_{l}(m,q) > \pi$.\end{center}
Thus, by Lemma \ref{3.41}, we have \begin{equation}\label{3.2400} d(p,l) + d(l,q) < d(p,m) +
d(m,q).\end{equation}According to Lemma \ref{3.9}, it follows that
\begin{equation}\label{3.2555}d(p,s) +
d(s,q) < d(p,t) +
d(t,q),\end{equation} for any $t$ on
$e_{1}$ that differs from $s$. The above relations ensure that
\begin{center}$d(p,s) + d(s,q) < d(p,m) + d(m,q)$.\end{center}
Because the segments $[p,s], [s,q],
[p,m]$ and $[m,q]$ in $U$
do not intersect the interior of $\sigma$, the same inequality holds in $U'$:
\begin{center}$d'(p,s)
+ d'(s,q) < d'(p,m) + d'(m,q).$\end{center}

\end{proof}

We find further the geodesic segments in $U'$ joining those pairs of points that are joined in $U$
by a segment that intersects the interior of $\sigma$. Because $K$ satisfies Property A, there are no geodesic segments $[p,q]$ in $U$ such that the points $p,q$
are joined in $U'$ by two geodesic segments $\gamma_{1}, \gamma_{2}$ of equal length such that $\gamma_{1}$ intersects one, while $\gamma_{2}$
intersects one or two
of the boundary edges of
$\alpha$ (if $\alpha$ is $2$-dimensional) or from $\alpha$ itself (if $\alpha$ is $1$-dimensional).

\begin{lemma}\label{3.17}
Let $s$ be a point on $e_{1}$ such that $\angle_{s}(a,p_{1}) =
\angle_{s}(b,q_{1})$ and $\angle_{s}(a,q_{1}) = \angle_{s}(b,p_{1})$.
Let $t$ be a point on $e_{2}$ such that $\angle_{t}(c,d) =
\angle_{t}(a,p_{1})$ and $\angle_{t}(a,c) = \angle_{t}(d,p_{1})$.
Let $v$ be a point on $e_{3}$ such that $\angle_{v}(c,q_{1}) =
\angle_{v}(a,d)$ and $\angle_{v}(a,q_{1}) = \angle_{v}(c,d)$.
If $d'(p,s) + d'(s,q) \leq d'(p,t) + d'(t,v) + d'(v,q)$, then the geodesic
segment $[p,q]$ in $U'$ with respect to $d'$ is the union of the geodesic segments $[p,s]$ and
$[s,q]$. Otherwise, the geodesic segment $[p,q]$ in $U'$ with respect to $d'$ is the union of
the geodesic segments $[p,t], [t,v]$ and $[v,q]$.

\end{lemma}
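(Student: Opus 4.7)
The plan is to exhibit two specific piecewise-geodesic paths in $U'$ as the only candidates for the geodesic from $p$ to $q$, then invoke Property A to single out whichever of the two is strictly shorter as the geodesic. The first candidate $[p,s]\cup[s,q]$ crosses $\partial\sigma$ only through an interior point of $e_{1}$; the second candidate $[p,t]\cup[t,v]\cup[v,q]$ crosses $\partial\sigma$ only through interior points of $e_{2}$ and $e_{3}$. Both candidates lie in $U'$ because the three 2-faces $\tau_{1},\tau_{2},\tau_{3}$ and their shared edges survive the elementary collapse. I would first establish existence and uniqueness of $t\in e_{2}$ and $v\in e_{3}$ by reproducing verbatim the midpoint-sequence argument of Lemma \ref{3.7}, this time on the adjacent pair $(\tau_{1},\tau_{3})$ with common edge $e_{2}$ for $t$, and on $(\tau_{3},\tau_{2})$ with common edge $e_{3}$ for $v$. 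Each constituent segment then lies in a single 2-face, which by Lemma \ref{3.1} is isometric to its Euclidean comparison triangle, so each such segment is a genuine geodesic of $U'$.

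Next I would show that each candidate is the shortest path of its own class. For $[p,s]\cup[s,q]$, Lemma \ref{3.9} combined with Corollary A yields $d'(p,s)+d'(s,q)<d'(p,s')+d'(s',q)$ for every $s'\in e_{1}\setminus\{s\}$, so this concatenation minimizes length among paths from $p$ to $q$ in $U'$ meeting $\partial\sigma$ only through an interior point of $e_{1}$. For $[p,t]\cup[t,v]\cup[v,q]$ the argument proceeds in two stages: apply the analogues of Lemma \ref{3.9} and Corollary A on the pair $(\tau_{1},\tau_{3})$ to see that $t$ is the optimal $e_{2}$-crossing, then analogously on $(\tau_{3},\tau_{2})$ to see that $v$ is the optimal $e_{3}$-crossing, and combine the two estimates. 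Paths avoiding $\partial\sigma$ entirely are strictly longer than $[p,s]\cup[s,q]$ by Lemma \ref{3.15}; paths that pass through a vertex of $\sigma$ or that cross additional edges can always be shortened to one of the two canonical candidates by repeated application of Aleksandrov's Lemma (Lemma \ref{1.1.25}) inside the Euclidean 2-faces of $\sigma$.

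Finally, Property A, inherited by $K'$ from $K$, forbids the existence of two distinct geodesics in $U'$ of equal length such that one crosses exactly one edge of $\{e_{1},e_{2},e_{3}\}$ and the other crosses one or two of the remaining edges. This promotes the weak hypothesis inequality to a strict one and pinpoints a unique candidate as the geodesic $[p,q]$ in $U'$. The principal obstacle I anticipate is the joint optimality step for the three-segment candidate: unlike in the single-crossing setting of Lemma \ref{3.9}, the minimizations at $t$ and $v$ cannot be decoupled trivially, and propagating the angle inequalities from the entry/exit points $p_{1},q_{1}$ out to $p,q$ through two successive reflection points requires careful repeated use of Corollary A before the length comparison with the single-edge candidate becomes transparent.
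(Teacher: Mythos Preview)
Your outline is workable in spirit but takes a considerably more laborious route than the paper, and the obstacle you flag at the end is precisely what the paper's argument sidesteps.

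The paper does not classify competing paths according to which of $e_{1},e_{2},e_{3}$ they cross and then optimize within each class. Instead it uses a single universal reduction: any path $c_{0}$ in $U'$ from $p$ to $q$ that does not pass through $s$ lies in $U$ and avoids the interior of $\sigma$, so Lemma~\ref{3.11} furnishes a point $m$ on $c_{0}$ with the property that both $[p,m]$ and $[m,q]$ (taken in $U$) miss the interior of $\sigma$. Lemma~\ref{3.15} then gives $d'(p,s)+d'(s,q)<d'(p,m)+d'(m,q)\le l(c_{0})$, which is already a contradiction. This one stroke disposes of every competitor at once, including paths that touch $e_{2}$ or $e_{3}$, paths through a vertex of $\sigma$, and paths that wander through several edges. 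For the second case $d'(p,t)+d'(t,v)+d'(v,q)<d'(p,s)+d'(s,q)$ the paper simply applies the same Lemma~\ref{3.11}/\ref{3.15} mechanism twice, once to the pair $(p,v)$ across $e_{2}$ and once to $(p,q)$ across $e_{3}$.

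What you gain by following the paper is that the ``joint optimality'' issue for the three-segment candidate never arises: you are not trying to show that $(t,v)$ is the best pair of crossing points, only that the concatenation beats an arbitrary rival, and Lemma~\ref{3.11} hands you the comparison point $m$ for free. Your approach would require either a genuine two-variable minimization argument on $e_{2}\times e_{3}$ or an iterated reflection argument that you have not spelled out; the step ``paths that pass through a vertex of $\sigma$ or that cross additional edges can always be shortened \dots\ by repeated application of Aleksandrov's Lemma'' is also doing a lot of unjustified work. None of this is needed once you notice that Lemma~\ref{3.11} applies to \emph{every} path in $U'$, not merely to those that stay away from $\partial\sigma$.
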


\begin{proof}\label{3.18}

Because $U$ is a CAT(0) space, Lemma \ref{3.7} guarantees that the points $s, t$ and $v$ exist and
they are unique.

\begin{figure}[h]
   \begin{center}
     \includegraphics[height=4cm]{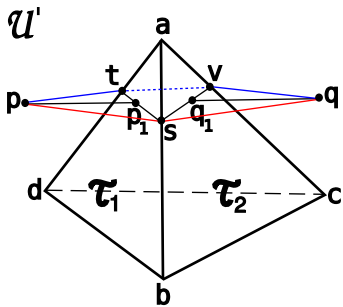}
        \caption{The geodesic segment $[p,q]$ in $U'$}
 \end{center}
\end{figure}

In case $d'(p,s) + d'(s,q) \leq
d'(p,t) + d'(t,v) + d'(v,q)$, let $c : [0,1] \rightarrow U'$ denote the path
obtained by concatenating the segments $[p,s]$ and $[s,q]$. Among all paths
joining $p$ to $q$ in $U'$ which pass through $s$, the path $c$ has
the shortest length.

Suppose that there exists a path $c_{0} : [0,1] \rightarrow U'$
connecting $p$ to $q$ in $U'$ that does not pass through $s$ and
whose length is less or equal to the length of the path $c$. Because
the path $c_{0}$ does not intersect $\sigma$, there exists,
according to Lemma \ref{3.11}, a point $m$ on $c_{0}$  such that
the geodesic segments $[p,m]$ and $[m,q]$ in $U$ do not intersect the interior of
$\sigma$. The geodesic segments $[p,m]$ and $[m,q]$ in $U$
belong therefore to $U'$. So

\begin{center}
$d'(p,m) + d'(m,q) \leq l(c_{0}) \leq l(c) = d'(p,s) + d'(s,q)$\\
\end{center}
which is, by Lemma \ref{3.15}, a contradiction. Any path in $U'$
joining $p$ to $q$ and which does not pass through $s$, is therefore longer
than $c$.

Altogether it follows that the geodesic segment joining $p$ to $q$
in $U'$ with respect to $d'$ is the union of the geodesic segments
$[p,s]$ and $[s,q]$.

In case $d'(p,s) + d'(s,q) >
d'(p,t) + d'(t,v) + d'(v,q)$, one can similarly show, applying Lemma \ref{3.15}
twice, that the geodesic segment joining $p$ to $q$
in $U'$ with respect to $d'$ is the union of the geodesic segments
$[p,t], [t,v]$ and $[v,q]$. Namely, Lemma \ref{3.15} will ensure that the geodesic segment joining $p$
to $v$ ($p$
to $q$) in $U'$ with respect to $d'$ is the union of the geodesic segments $[p,t]$ and $[t,v]$
($[p,v]$ and $[v,q]$).

\end{proof}

Using the CAT(0) inequality and Alexandrov's Lemma, we show further that any geodesic triangle in
$U'$ satisfies the CAT(0)
inequality. Depending on the position of the vertices of such geodesic triangle with respect to
the $3$-simplex of $U$ with the free face, we must consider eight cases.
We will find geodesic segments
in $U'$ using, mostly without stating so explicitely, Lemma \ref{3.17}.

\begin{lemma}\label{3.19}
Let $r$ be a point in $U$ such that the geodesic segments  $[r,p]$
and $[r,q]$  do not intersect the interior of $\sigma$. Let $s$ be a point on $e_{1}$ such that
$\angle_{s}(a,p_{1}) = \angle_{s}(b,q_{1})$ and $\angle_{s}(a,q_{1}) = \angle_{s}(b,p_{1})$. Let $t$
be a point on $e_{2}$ such that $\angle_{t}(c,d) = \angle_{t}(a,p_{1})$ and $\angle_{t}(a,c) =
\angle_{t}(d,p_{1})$. Let $v$ be a point on $e_{3}$ such that $\angle_{v}(c,q_{1}) =
\angle_{v}(a,d)$ and $\angle_{v}(a,q_{1}) = \angle_{v}(c,d)$. If $d'(p,s) + d'(s,q) < d'(p,t) +
d'(t,v) + d'(v,q)$, then the geodesic triangle $\triangle (p,q,r)$ in $U'$ satisfies the CAT(0)
inequality.
\end{lemma}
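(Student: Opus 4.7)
The plan is to verify the CAT(0) inequality for $\triangle(p,q,r)$ in $U'$ by checking the angle condition at each of its three vertices, using the characterization recorded in Section~2. Since by the standing hypothesis and Lemma~\ref{3.17} the geodesic $[p,q]$ in $U'$ is the broken path $[p,s]\cup[s,q]$, the natural strategy is to split $\triangle(p,q,r)$ along the segment $[s,r]$ into two auxiliary triangles $T_1=\triangle(p,s,r)$ and $T_2=\triangle(s,q,r)$. Each of $T_1,T_2$ has its sides $[p,s],[s,q],[p,r],[q,r]$ outside the interior of $\sigma$; granting the same for $[s,r]$ (a point we return to below), both $T_1$ and $T_2$ are geodesic triangles in the CAT(0) space $U$ and so obey the CAT(0) inequality there.

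I would then take comparison triangles $\overline{T}_1=\overline{\triangle}(\bar p,\bar s,\bar r)$ and $\overline{T}_2=\overline{\triangle}(\bar s,\bar q,\bar r)$ in $\mathbf{R}^2$, glued along the common edge $[\bar s,\bar r]$ with $\bar p$ and $\bar q$ in opposite half-planes. The crucial step is to establish the hinge estimate
\[
\overline{\angle}^{\,T_1}_{\bar s}(\bar p,\bar r)+\overline{\angle}^{\,T_2}_{\bar s}(\bar r,\bar q)\ge\pi.
\]
Because comparison angles dominate Aleksandrov angles in a CAT(0) space, it suffices to show $\angle_s(p,r)+\angle_s(r,q)\ge\pi$ in $U$. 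This is exactly where Corollary~A intervenes: it reduces the question to $\angle_s(p_1,r)+\angle_s(r,q_1)\ge\pi$, which I expect to prove by the pattern used inside Lemma~\ref{3.15} to obtain $\angle_l(p_1,m)+\angle_l(m,q_1)>\pi$. Namely, $[p,r]\cup[r,q]$ is a path from $p$ to $q$ disjoint from the interior of $\sigma$, so Lemma~\ref{3.11} together with the planar geometry of the faces $\tau_1,\tau_2$ forces $r$ to lie on the side of the geodesic triangle $\triangle(p_1,q_1,s)$ opposite to the interior of $\sigma$, and the angle inequality drops out.

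Once the hinge sum is known to be at least $\pi$, Aleksandrov's Lemma (Lemma~\ref{1.1.25}) flattens the glued pair $(\overline{T}_1,\overline{T}_2)$ into a single Euclidean triangle whose sides are $d(p,s)+d(s,q)=d'(p,q)$, $d(p,r)$ and $d(q,r)$, which is precisely the comparison triangle $\overline{T}$ for $\triangle(p,q,r)$ in $U'$. The same lemma supplies
\[
\overline{\overline{\angle}}_{\bar{\bar p}}(\bar{\bar q},\bar{\bar r})\ge\overline{\angle}^{\,T_1}_{\bar p}(\bar s,\bar r),\qquad
\overline{\overline{\angle}}_{\bar{\bar q}}(\bar{\bar p},\bar{\bar r})\ge\overline{\angle}^{\,T_2}_{\bar q}(\bar s,\bar r).
\]
Combined with the CAT(0) bounds $\angle_p(s,r)\le\overline{\angle}^{\,T_1}_{\bar p}(\bar s,\bar r)$ and $\angle_q(s,r)\le\overline{\angle}^{\,T_2}_{\bar q}(\bar s,\bar r)$ in $U$, and with the identifications $\angle_p(q,r)_{U'}=\angle_p(s,r)$ and $\angle_q(p,r)_{U'}=\angle_q(s,r)$ (since the $U'$-geodesic from $p$ to $q$ leaves $p$ along $[p,s]$ and $q$ along $[q,s]$), this yields the desired angle condition at $p$ and at $q$. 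At the vertex $r$ the condition is immediate: $[r,p]$ and $[r,q]$ are the same segments in $U$ and in $U'$, so $\angle_r(p,q)_{U'}=\angle_r(p,q)_U$, and the inequality $d'(p,q)\ge d(p,q)$ makes the comparison angle at $\bar{\bar r}$ in $\overline{T}$ no smaller than the comparison angle at $r$ in the comparison triangle of $\triangle(p,q,r)$ in $U$, which already bounds $\angle_r(p,q)_U$ by the CAT(0) property of $U$.

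The main obstacle I foresee is the hinge estimate $\angle_s(p,r)+\angle_s(r,q)\ge\pi$: its proof requires a careful reading of how $r$ sits relative to the faces $\tau_1,\tau_2$ and the edge $e_1$, and tacitly uses Property~A to rule out exceptional positions of $r$ (for example, ones for which the geodesic $[s,r]$ in $U$ itself plunges into the interior of $\sigma$ and spoils the decomposition into $T_1$ and $T_2$). Beyond this, the argument is a routine combination of Aleksandrov's Lemma with the law of cosines applied to the two comparison triangles and their flattening.
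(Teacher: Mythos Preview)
Your approach is essentially the paper's: split $\triangle(p,q,r)$ along $[s,r]$, take comparison triangles $\overline{\triangle}(p,s,r)$ and $\overline{\triangle}(s,q,r)$ for the two sub-triangles viewed in $U$, and use Aleksandrov's Lemma to compare their gluing with the single Euclidean comparison triangle $\triangle(p',q',r')$ for the $U'$-triangle. The paper writes this last step as ``Because $\angle_{s'}(p',r')+\angle_{s'}(r',q')=\pi$, Aleksandrov's Lemma implies\ldots'', and your explicit identification of the hinge estimate $\angle_s(p,r)+\angle_s(r,q)\ge\pi$ (via Corollary~A together with the argument of Lemma~\ref{3.15}) is exactly the ingredient that makes Aleksandrov's Lemma cut in the desired direction. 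Your alternative treatment of the angle at $r$, using $d'(p,q)\ge d(p,q)$ and monotonicity of the Euclidean comparison angle, is correct and slightly different from the paper's route $\angle_r(p,q)\le\angle_r(p,s)+\angle_r(s,q)\le\angle_{r'}(p',s')+\angle_{r'}(s',q')=\angle_{r'}(p',q')$, but both arrive at the same conclusion.

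One concern you raise is unnecessary. Whether the geodesic $[s,r]$ in $U$ enters the interior of $\sigma$ is irrelevant: the triangles $T_1=\triangle(p,s,r)$ and $T_2=\triangle(s,q,r)$ are geodesic triangles in the CAT(0) space $U$, and that is all you use about them (their comparison angles dominate their Aleksandrov angles). The segment $[s,r]$ is purely auxiliary; it need not lie in $U'$, and Property~A plays no role at this point of the argument. So you can drop that caveat and the proof goes through exactly as you outlined.
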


\begin{proof}\label{3.20}

Because $U$ is a CAT(0) space, according to Lemma \ref{3.7}, the point $s,t$ and $v$ exist and they
are
unique. Lemma \ref{3.17} ensures that $d'(p,q) = d'(p,s) + d'(s,q)$.

\begin{figure}[h]
   \begin{center}
     \includegraphics[height=4cm]{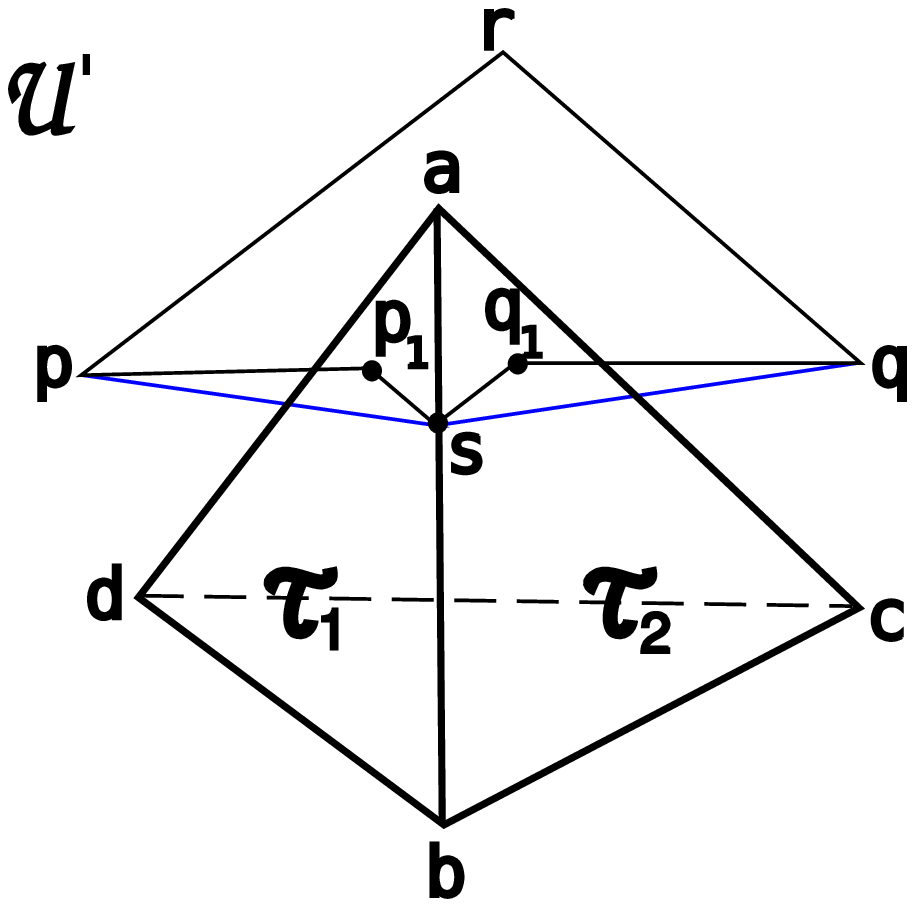}
        \caption{The geodesic triangle $\triangle(p,q,r)$ in $U'$ satisfies the CAT(0) inequality}
 \end{center}
\end{figure}

Let $\triangle (p',q',r')$ be a comparison triangle in
$\mathbf{R}^{2}$ for the geodesic triangle
$\triangle (p,q,r)$ in $U'$. Let $s' \in [p',q']$ be a comparison point for $s \in [p,q]$.

Let $\triangle (p'',r'',s'')$ be a comparison triangle in
$\mathbf{R}^{2}$ for the geodesic triangle $\triangle (p,r,s)$ in $U$ and let $\triangle
(r'',s'',q'')$ be a comparison triangle in $\mathbf{R}^{2}$ for
the geodesic triangle $\triangle (r,s,q)$ in $U$. We place the comparison triangles
$\triangle (p'',r'',s'')$ and $\triangle (r'',s'',q'')$ in
different half-planes with respect to the line $r''s''$ in
$\mathbf{R}^{2}$.

\begin{figure}[h]
   \begin{center}
     \includegraphics[height=3cm]{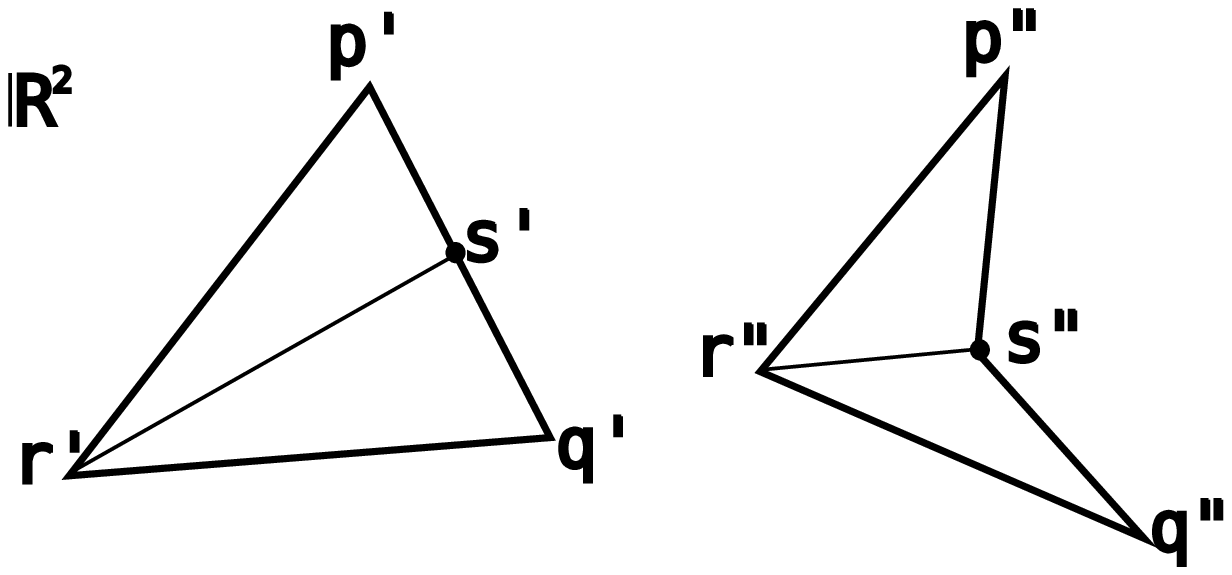}
        \caption{Comparison triangles in $\mathbf{R}^{2}$}
 \end{center}
\end{figure}

By the CAT(0) inequality, we have $\angle_{r}(p,s) \leq
\angle_{r''}(p'',s'')$, $\angle_{r}(s,q) \leq
\angle_{r''}(s'',q'')$, $\angle_{p}(r,s) \leq
\angle_{p''}(r'',s'')$ and $\angle_{q}(r,s) \leq
\angle_{q''}(r'',s'')$. Because $\angle_{s'}(p',r') + \angle_{s'}(r',q') = \pi$, Alexandrov's
Lemma further implies: $\angle_{r''}(p'',s'') \leq
\angle_{r'}(p',s')$, $\angle_{r''}(s'',q'') \leq
\angle_{r'}(s',q')$, $\angle_{p''}(r'',s'') \leq
\angle_{p'}(r',s')$ and $\angle_{q''}(r'',s'') \leq
\angle_{q'}(r',s')$. Altogether it follows that $\angle_{r}(p,q) \leq \angle_{r}(p,s) +
\angle_{r}(s,q) \leq \angle_{r'}(p',s') + \angle_{r'}(s',q') = \angle_{r'}(p',q')$,
$\angle_{p}(r,s) \leq \angle_{p'}(r',s')$ and $\angle_{q}(r,s) \leq
\angle_{q'}(r',s')$. So the geodesic triangle $\triangle (p,q,r)$ in $U'$ satisfies the CAT(0)
inequality.

\end{proof}

\begin{lemma}\label{3.21}

Let $r$ be a point in $U$ such that the geodesic segments  $[r,p]$
and $[r,q]$ do not intersect the interior of $\sigma$.
Let $s$ be a point on $e_{1}$ such that $\angle_{s}(a,p_{1})
= \angle_{s}(b,q_{1})$ and $\angle_{s}(a,q_{1}) = \angle_{s}(b,p_{1})$. Let $t$ be a point on
$e_{2}$ such that $\angle_{t}(c,d) = \angle_{t}(a,p_{1})$ and $\angle_{t}(a,c) =
\angle_{t}(d,p_{1})$. Let $v$ be a point on $e_{3}$ such that $\angle_{v}(c,q_{1}) =
\angle_{v}(a,d)$ and $\angle_{v}(a,q_{1}) = \angle_{v}(c,d)$. If $d'(p,t) +
d'(t,v) + d'(v,q) < d'(p,s) + d'(s,q)$, then the geodesic triangle $\triangle (p,q,r)$ in $U'$
satisfies the CAT(0) inequality.
\end{lemma}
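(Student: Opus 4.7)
The plan is to mirror the argument of Lemma~\ref{3.19}, replacing the single bending point $s$ with the two bending points $t$ and $v$. By Lemma~\ref{3.7} the points $s, t, v$ exist and are unique; by Lemma~\ref{3.17} together with the hypothesis, the geodesic segment from $p$ to $q$ in $U'$ equals $[p,t] \cup [t,v] \cup [v,q]$, so $d'(p,q) = d(p,t) + d(t,v) + d(v,q)$. Since $[r,p]$ and $[r,q]$ avoid the interior of $\sigma$, the three sub-triangles $\triangle(p,r,t)$, $\triangle(r,t,v)$, $\triangle(r,v,q)$ are geodesic triangles in $U$ and each satisfies the CAT(0) inequality.

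I would then build the Euclidean comparison triangle $\triangle(p',q',r')$ for $\triangle(p,q,r)$ in $U'$ together with comparison points $t', v' \in [p',q']$ for $t$ and $v$, and, in parallel, three Euclidean comparison triangles $\triangle(p'', r'', t'')$, $\triangle(r'', t'', v'')$, $\triangle(r'', v'', q'')$ for the three sub-triangles in $U$, glued in $\mathbb{R}^{2}$ along $[r'', t'']$ and $[r'', v'']$ with successive pieces placed on opposite half-planes, so that $p'', t'', v'', q''$ form a polyline with common apex $r''$. Applying the CAT(0) inequality to each sub-triangle in $U$ yields the usual comparisons relating the Aleksandrov angles of the sub-triangles to the Euclidean angles in the three comparison triangles. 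I would then invoke Alexandrov's Lemma twice: first at $t''$, to amalgamate $\triangle(p'', r'', t'')$ and $\triangle(r'', t'', v'')$ into a single Euclidean triangle whose vertices correspond to $p, r, v$ with $t''$ on the side corresponding to $[p,v]$; and then at $v''$, to amalgamate the result with $\triangle(r'', v'', q'')$ into a single Euclidean triangle isometric to $\triangle(p', q', r')$, with $v'$ on $[p', q']$.

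Chaining the angle inequalities from the three CAT(0) sub-comparisons with the two applications of Alexandrov's Lemma (exactly as in Lemma~\ref{3.19}, but performed twice in succession) gives $\angle_r(p,q) \leq \angle_{r'}(p',q')$ together with the analogous inequalities at $p$ and $q$, which in turn yield the CAT(0) inequality for $\triangle(p, q, r)$ in $U'$ at every comparison point of $[p,q]$.

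The main obstacle is verifying, at each of the two bends, that the relevant Euclidean angle sum in the unfolded figure is at least $\pi$, since this is what controls the direction of the inequalities produced by Alexandrov's Lemma. This is achieved by lifting the reflection identities defining $t$ on $e_2$ and $v$ on $e_3$ (Lemma~\ref{3.7}) to the Euclidean comparison picture through the CAT(0) inequality, using the flatness of $\tau_1, \tau_2, \tau_3$ (Lemma~\ref{3.1}) together with the angle-sum inequality of Lemma~\ref{3.9}. In particular, after the first application of Alexandrov's Lemma one must check that the intermediate Euclidean triangle is in the correct regime at $v''$ to support the second application; once this is established, the remainder of the argument proceeds exactly as in Lemma~\ref{3.19}.
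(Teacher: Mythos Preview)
Your proposal is correct and follows essentially the same route as the paper: decompose $\triangle(p,q,r)$ in $U'$ along the bend points $t$ and $v$ into the three sub-triangles $\triangle(p,r,t)$, $\triangle(r,t,v)$, $\triangle(r,v,q)$ in $U$, apply the CAT(0) inequality to each, and then use Alexandrov's Lemma twice (first at $t$, then at $v$) to amalgamate the comparison triangles into $\triangle(p',q',r')$. The paper carries this out by introducing the intermediate comparison triangle $\triangle(p''',r''',v''')$ with $t'''\in[p''',v''']$ and then passing to $\triangle(p',q',r')$ via $v'\in[p',q']$, which is exactly the two-step amalgamation you describe; your discussion of the angle-sum hypothesis needed for each application of Alexandrov's Lemma makes explicit a point the paper leaves implicit.
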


\begin{proof}\label{3.22}

Because $U$ is a CAT(0) space, by Lemma \ref{3.7}, the points $s,t$ and $v$ exist and they are
unique. Lemma \ref{3.17} further implies that $d'(p,q) = d'(p,t) + d'(t,v) + d'(v,q)$.

\begin{figure}[h]
   \begin{center}
     \includegraphics[height=4cm]{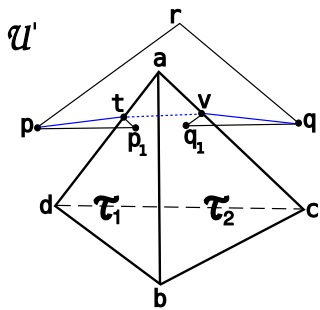}
        \caption{The geodesic triangle $\triangle(p,q,r)$ in $U'$ satisfies the CAT(0) inequality}
 \end{center}
\end{figure}

Let $\triangle (p',q',r')$ be a comparison triangle in $\mathbf{R}^{2}$ for the geodesic triangle
$\triangle (p,q,r)$ in $U'$. Let $t' \in [p',q']$ be a comparison point for $t \in [p,q]$
and let $v' \in [p',q']$ be a comparison point for $v \in [p,q]$.

Let $\triangle (p'',r'',t'')$ be a comparison triangle in $\mathbf{R}^{2}$ for the geodesic
triangle $\triangle (p,r,t)$ in $U$. Let $\triangle (r'',t'',v'')$ be a comparison triangle in
$\mathbf{R}^{2}$ for the geodesic triangle $\triangle (r,t,v)$ in $U$. We place the geodesic
triangles $\triangle (p'',r'',t'')$ and $\triangle (r'',t'',v'')$ in different half-planes with
respect to the line $r''t''$ in $\mathbf{R}^{2}$. By the CAT(0) inequality,
$\angle_{r}(p,t) \leq \angle_{r''}(p'',t''), \angle_{r}(t,v) \leq
\angle_{r''}(t'',v'')$ and $\angle_{p}(r,t) \leq \angle_{p''}(r'',t'').$

Let $\triangle (p''',r''',v''')$ be a comparison triangle in
$\mathbf{R}^{2}$ for the geodesic triangle $\triangle (p,r,v)$ in $U$. Let $t''' \in [p''',v''']$
be a comparison point for $t \in [p,v]$.
Let $\triangle
(r''',v''',q''')$ be a comparison triangle in $\mathbf{R}^{2}$ for
the geodesic triangle $\triangle (r,v,q)$ in $U$. We place the comparison triangles
$\triangle (p''',r''',v''')$ and $\triangle (r''',v''',q''')$ in
different half-planes with respect to the line $r'''v'''$ in
$\mathbf{R}^{2}$. By the CAT(0) inequality,
$\angle_{q}(r,v) \leq \angle_{q'''}(r''',v''')$ and $\angle_{r}(v,q) \leq \angle_{r'''}(v''',q''').$

Because $\angle_{t'''}(p''',r''') + \angle_{t'''}(r''',v''') = \pi$ and $\angle_{v'}(p',r') +
\angle_{v'}(r',q') = \pi$, Alexandrov's Lemma guarantees that $\angle_{p''}(r'',t'') \leq
\angle_{p'''}(r''',t''') \leq
\angle_{p'}(r',t'), \angle_{r''}(p'',v'') \leq
\angle_{r'''}(p''',v''') \leq
\angle_{r'}(p',v'),$ while $\angle_{q'''}(r''',v''') \leq \angle_{q'}(r',v')$. Altogether it
follows that $\angle_{p}(r,q) \leq
\angle_{p'}(r',q'), \angle_{q}(p,r) \leq
\angle_{q'}(p',r')$ and $\angle_{r}(p,q) \leq
\angle_{r'}(p',q').$ So the geodesic triangle $\triangle(p,q,r)$ in $U'$ satisfies the
CAT(0) inequality.

\end{proof}

\begin{lemma}\label{3.23}
Let $r$ be a point in $U$ such that the geodesic segment $[r,q]$ does not intersect the
interior of $\sigma$ whereas the geodesic segment $[p,r]$ intersects the interior
of $\tau_{1}$ in $p_{2}$, and the interior
of $\tau_{2}$ in $r_{1}$. Let $s$ be a point on $e_{1}$ such that
$\angle_{s}(a,p_{1}) = \angle_{s}(b,q_{1})$ and $\angle_{s}(a,q_{1}) = \angle_{s}(b,p_{1})$. Let
$t$ be a point on $e_{1}$ such that $\angle_{t}(a,p_{2}) =
\angle_{t}(b,r_{1})$ and $\angle_{t}(a,r_{1}) = \angle_{t}(b,p_{2})$. If $d'(p,q) =
d'(p,s) + d'(s,q)$ and $d'(p,r) = d'(p,t) + d'(t,r)$, then the geodesic triangle $\triangle
(p,q,r)$ in $U'$ satisfies the CAT(0) inequality.
\end{lemma}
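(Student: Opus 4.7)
The plan is to adapt the comparison-triangle method used in Lemmas \ref{3.19} and \ref{3.21} to the present situation, where \emph{two} of the three sides of the $U'$-triangle bend on the same edge $e_{1}$, at the points $s$ and $t$ respectively. I first build the Euclidean comparison triangle $\triangle(p',q',r')$ for $\triangle(p,q,r)$ in $U'$, with $|p'q'|=d'(p,q)$, $|p'r'|=d'(p,r)$, $|q'r'|=d(q,r)$. The hypotheses $d'(p,q)=d'(p,s)+d'(s,q)$ and $d'(p,r)=d'(p,t)+d'(t,r)$ let me mark a comparison point $s'\in[p',q']$ for $s$ (so $|p's'|=d(p,s)$) and a comparison point $t'\in[p',r']$ for $t$ (so $|p't'|=d(p,t)$).

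Since $s,t\in e_{1}$, the segment $[s,t]\subset e_{1}$ lies in $U$ (hence in $U'$) and is a geodesic in both. I subdivide the $U'$-triangle into three geodesic triangles $\triangle(p,s,t)$, $\triangle(s,t,r)$, and $\triangle(s,r,q)$ in the CAT(0) space $U$, sharing the successive diagonals $[s,t]$ and $[s,r]$. For each of these three subtriangles I take a comparison triangle in $\mathbb{R}^{2}$ and place them side by side, unfolded along the shared sides. Applying the CAT(0) inequality inside $U$ to each subtriangle gives, vertex by vertex, bounds on the Alexandrov angle in $U$ by the corresponding Euclidean angle in its comparison triangle.

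Finally, I chain these local bounds into the three global bounds $\angle_{p}(q,r)\leq\angle_{p'}(q',r')$, $\angle_{q}(p,r)\leq\angle_{q'}(p',r')$, $\angle_{r}(p,q)\leq\angle_{r'}(p',q')$ by applying Aleksandrov's Lemma (Lemma \ref{1.1.25}) successively at the interior comparison points, using the collinearities $\angle_{s'}(p',r')+\angle_{s'}(r',q')=\pi$ and $\angle_{t'}(p',q')+\angle_{t'}(q',r')=\pi$ in $\triangle(p',q',r')$, together with the $\pi$-angle-sum at $s$ and $t$ provided by Lemma \ref{3.7}. The three vertex-angle inequalities then give the CAT(0) inequality for $\triangle(p,q,r)$ in $U'$. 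The main obstacle will be the intermediate diagonal $[s,r]$: I must verify that treating it as a side of both $\triangle(s,t,r)$ and $\triangle(s,r,q)$ in $U$ is legitimate and that no hidden additional bend point on $e_{1}$ appears inside $\sigma$; this is exactly the kind of pathology that Property A is designed to exclude, so Property A is what makes the subdivision-and-chain argument go through and the successive Aleksandrov-Lemma steps align with the correct inequality directions.
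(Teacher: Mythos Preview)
Your approach is correct and matches the paper's: the same subdivision of $\triangle(p,q,r)$ into $\triangle(p,t,s)$, $\triangle(t,s,r)$, $\triangle(s,r,q)$ along the diagonals $[s,t]\subset e_{1}$ and $[s,r]$, followed by two applications of Aleksandrov's Lemma (first at the comparison point $t'''\in[p''',r''']$ to pass from the glued pair $\triangle(p'',t'',s'')\cup\triangle(t'',s'',r'')$ to $\triangle(p''',r''',s''')$, then at $s'\in[p',q']$ to pass from $\triangle(p''',r''',s''')\cup\triangle(q''',r''',s''')$ to $\triangle(p',q',r')$). One small correction: the paper does not invoke Property~A or the $\pi$-angle-sum of Lemma~\ref{3.7} in this proof---the Aleksandrov steps are driven purely by the Euclidean collinearities $\angle_{t'''}(p''',s''')+\angle_{t'''}(s''',r''')=\pi$ and $\angle_{s'}(p',r')+\angle_{s'}(r',q')=\pi$ in the successive comparison figures, so your worry about the diagonal $[s,r]$ is unnecessary here.
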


\begin{proof}\label{3.24}

Because $U$ is a CAT(0) space, Lemma \ref{3.7} implies that the points $s$ and $t$ exist and they
are unique.

\begin{figure}[h]
   \begin{center}
     \includegraphics[height=4cm]{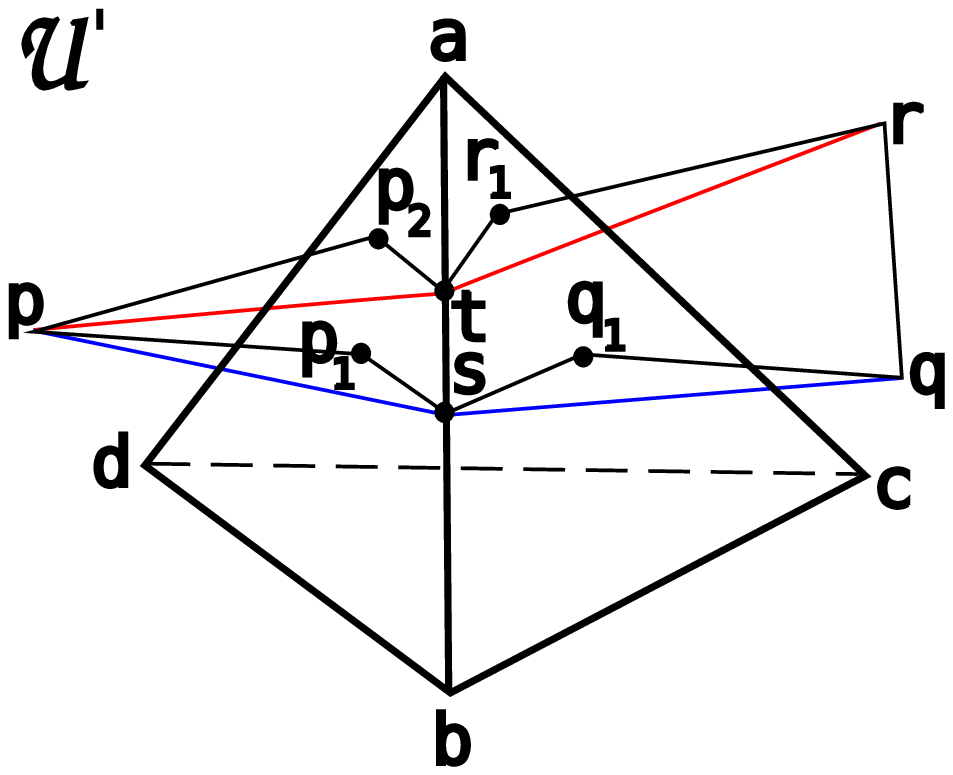}
         \caption{The geodesic triangle $\triangle(p,q,r)$ in $U'$ satisfies the CAT(0) inequality}
 \end{center}
\end{figure}

Let $\triangle (p',q',r')$ be a comparison triangle in $\mathbf{R}^{2}$ for the
geodesic triangle $\triangle (p,q,r)$ in $U'$. Let $s' \in [p',q']$ be a comparison point for $s
\in
[p,q]$. Let $t' \in [p',r']$ be a comparison point for $t \in [p,r]$.

Let $\triangle (p'',t'',s'')$ be a comparison triangle in
$\mathbf{R}^{2}$ for the geodesic triangle $\triangle (p,t,s)$ in $U$. Let $\triangle
(t'',s'',r'')$ be a comparison triangle in $\mathbf{R}^{2}$ for
the geodesic triangle $\triangle (t,s,r)$ in $U$. We place the comparison triangles
$\triangle (p'',t'',s'')$ and $\triangle (t'',s'',r'')$ in
different half-planes with respect to the line $t''s''$ in
$\mathbf{R}^{2}$. The CAT(0) inequality implies that $\angle_{p}(t,s) \leq
\angle_{p''}(t'',s'')$, $\angle_{r}(t,s) \leq
\angle_{r''}(t'',s'')$.

Let $\triangle (p''',r''',s''')$ be a comparison triangle in
$\mathbf{R}^{2}$ for the geodesic triangle $\triangle (p,r,s)$ in $U$. Let $t''' \in [p''',r''']$
be a comparison point for $t \in [p,r]$. Let $\triangle (q''',r''',s''')$ be a comparison triangle
in $\mathbf{R}^{2}$ for the geodesic triangle $\triangle (q,r,s)$ in $U$. We place the comparison
triangles $\triangle (p''',r''',s''')$ and $\triangle (q''',r''',s''')$ in
different half-planes with respect to the line $r'''s'''$ in
$\mathbf{R}^{2}$.

Because $\angle_{t'''}(p''',s''') +
\angle_{t'''}(s''',r''') = \pi$, by Alexandrov's Lemma we have $\angle_{p''}(t'',s'') \leq
\angle_{p'''}(t''',s''')$, $\angle_{r''}(t'',s'') \leq \angle_{r'''}(t''',s''')$.

Note that $\angle_{s'}(p',r') +
\angle_{s'}(r',q') = \pi$. So, by Alexandrov's Lemma and the CAT(0)
inequality, we have $\angle_{q}(r,s) \leq \angle_{q'''}(r''',s''') \leq \angle_{q'}(r',s')$,
$\angle_{r}(p,q) \leq \angle_{r}(p,s) + \angle_{r}(s,q) \leq
\angle_{r'''}(p''',s''') + \angle_{r'''}(s''',q''') \leq \angle_{r'}(p',s') + \angle_{r'}(s',q') =
\angle_{r'}(p',q'), \angle_{p}(r,s) \leq \angle_{p'''}(r''',s''') \leq \angle_{p'}(r',s')$.
Thus the geodesic triangle $\triangle (p,q,r)$ in $U'$ satisfies the CAT(0)
inequality.

\end{proof}

\begin{lemma}\label{3.25}
Let $r$ be a point in $U$ such that the geodesic segment $[r,q]$ does not intersect the
interior of $\sigma$ whereas the geodesic segment $[p,r]$ intersects the interior of
$\tau_{1}$ in $p_{2}$, and the interior of $\tau_{3}$ in
$r_{1}$. Let $s$ be a point on $e_{1}$ such that $\angle_{s}(a,p_{1}) =
\angle_{s}(b,q_{1})$ and $\angle_{s}(a,q_{1}) = \angle_{s}(b,p_{1})$. Let
$t$ be a point on $e_{2}$ such that $\angle_{t}(a,p_{2}) =
\angle_{t}(d,r_{1})$ and $\angle_{t}(a,r_{1}) = \angle_{t}(d,p_{2})$. If $d'(p,q) =
d'(p,s) + d'(s,q)$ and $d'(p,r) = d'(p,t) + d'(t,r)$, then the geodesic triangle $\triangle
(p,q,r)$ in $U'$ satisfies the CAT(0) inequality.
\end{lemma}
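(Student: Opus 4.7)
The plan is to mimic the proof of Lemma \ref{3.23}, adjusting only for the geometric fact that here the detour points $s$ and $t$ lie on distinct edges $e_{1}$ and $e_{2}$ of the common face $\tau_{1}$, rather than on the single edge $e_{1}$. First, by Lemma \ref{3.7}, the points $s\in e_{1}$ and $t\in e_{2}$ exist and are unique, and by the hypotheses $d'(p,q)=d'(p,s)+d'(s,q)$ and $d'(p,r)=d'(p,t)+d'(t,r)$, the geodesic segment $[p,q]$ in $U'$ coincides with $[p,s]\cup [s,q]$ and the geodesic segment $[p,r]$ in $U'$ coincides with $[p,t]\cup [t,r]$. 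I would fix a comparison triangle $\triangle(p',q',r')$ in $\mathbf{R}^{2}$ for $\triangle(p,q,r)$ in $U'$, with $s'\in [p',q']$ and $t'\in [p',r']$ the comparison points for $s$ and $t$, and construct the four auxiliary geodesic triangles $\triangle(p,t,s)$, $\triangle(t,s,r)$, $\triangle(p,r,s)$ and $\triangle(q,r,s)$ in $U$, all of which satisfy the CAT(0) inequality since $U$ is CAT(0). The one conceptual point to verify is that, because both $e_{1}$ and $e_{2}$ lie in $\tau_{1}$, which by Lemma \ref{3.1} is isometric to its Euclidean comparison triangle, the geodesic segment $[s,t]$ in $U$ stays in $\tau_{1}$ and in particular on the boundary of $\sigma$.

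Next, I would build the corresponding comparison triangles in $\mathbf{R}^{2}$: $\triangle(p'',t'',s'')$ for $\triangle(p,t,s)$ and $\triangle(t'',s'',r'')$ for $\triangle(t,s,r)$, placed in opposite half-planes with respect to the line $t''s''$; then $\triangle(p''',r''',s''')$ for $\triangle(p,r,s)$ and $\triangle(q''',r''',s''')$ for $\triangle(q,r,s)$, placed in opposite half-planes with respect to the line $r'''s'''$, with $t'''\in [p''',r''']$ the comparison point for $t$. The CAT(0) inequality applied to the first two sub-triangles gives $\angle_{p}(t,s)\le \angle_{p''}(t'',s'')$ and $\angle_{r}(t,s)\le \angle_{r''}(t'',s'')$. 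A first application of Alexandrov's Lemma at $t'''$, exploiting $\angle_{t'''}(p''',s''')+\angle_{t'''}(s''',r''')=\pi$, yields $\angle_{p''}(t'',s'')\le \angle_{p'''}(t''',s''')$ and $\angle_{r''}(t'',s'')\le \angle_{r'''}(t''',s''')$. A second application of Alexandrov's Lemma at $s'$, using $\angle_{s'}(p',r')+\angle_{s'}(r',q')=\pi$, together with the CAT(0) inequality on $\triangle(q,r,s)$, chains the estimates all the way to the main comparison triangle $\triangle(p',q',r')$.

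Assembling the chain as in the proof of Lemma \ref{3.23}, I would obtain $\angle_{p}(r,q)\le \angle_{p'}(r',q')$, $\angle_{q}(p,r)\le \angle_{q'}(p',r')$, and $\angle_{r}(p,q)\le \angle_{r}(p,s)+\angle_{r}(s,q)\le \angle_{r'}(p',s')+\angle_{r'}(s',q')=\angle_{r'}(p',q')$, which is exactly the CAT(0) inequality for $\triangle(p,q,r)$ in $U'$. The main obstacle is bookkeeping the chain of Alexandrov-angle comparisons through the four auxiliary comparison triangles and two invocations of Alexandrov's Lemma; the only step genuinely specific to this lemma, as opposed to Lemma \ref{3.23}, is the observation that $[s,t]$ passes through the interior of $\tau_{1}$ rather than lying on a single edge of $\sigma$, but since it still avoids the interior of $\sigma$ the angle-chasing argument of Lemma \ref{3.23} carries over unchanged.
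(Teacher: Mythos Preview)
Your proposal is correct and follows essentially the same approach as the paper: the same four auxiliary geodesic triangles $\triangle(p,t,s)$, $\triangle(t,s,r)$, $\triangle(p,r,s)$, $\triangle(q,r,s)$ in $U$, the same pairs of comparison triangles glued along $t''s''$ and $r'''s'''$, and the same two applications of Alexandrov's Lemma at $t'''$ and at $s'$ to chain the CAT(0) angle estimates up to the comparison triangle $\triangle(p',q',r')$. Your extra remark that $[s,t]\subset\tau_{1}$ (since $e_{1},e_{2}\subset\tau_{1}$) and hence avoids the interior of $\sigma$ is a helpful clarification that the paper leaves implicit.
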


\begin{proof}\label{3.26}

Because $U$ is a CAT(0) space, by Lemma \ref{3.7}, the points $s$ and $t$ exist and they are
unique.

\begin{figure}[h]
   \begin{center}
     \includegraphics[height=4cm]{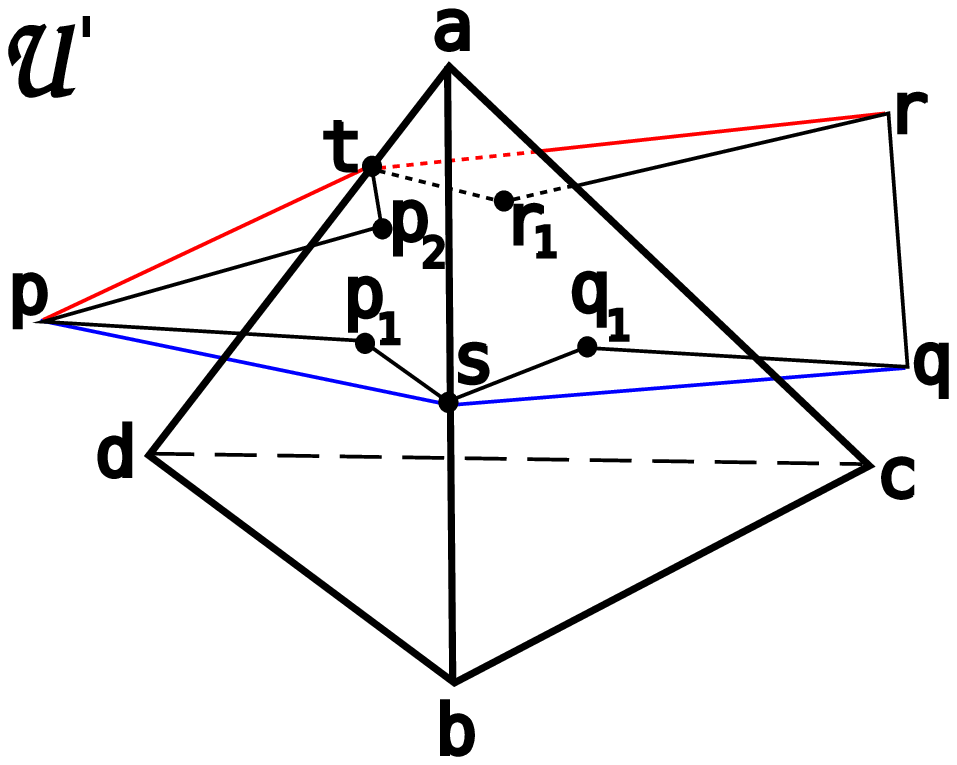}
         \caption{The geodesic triangle $\triangle(p,q,r)$ in $U'$ satisfies the CAT(0) inequality}
\end{center}
\end{figure}

Let $\triangle (p',q',r')$ be a comparison triangle in $\mathbf{R}^{2}$ for the
geodesic triangle $\triangle (p,q,r)$ in $U'$. Let $s' \in [p',q']$ be a comparison point for $s
\in [p,q]$. Let $t' \in [p',r']$ be a comparison point for $t \in [p,r]$.

Let $\triangle (p'',t'',s'')$ be a comparison triangle in
$\mathbf{R}^{2}$ for the geodesic triangle $\triangle (p,t,s)$ in $U$. Let $\triangle
(t'',s'',r'')$ be a comparison triangle in $\mathbf{R}^{2}$ for the geodesic triangle
$\triangle (t,s,r)$ in $U$. We place the comparison triangles
$\triangle (p'',t'',s'')$ and $\triangle (t'',s'',r'')$ in
different half-planes with respect to the line $t''s''$ in
$\mathbf{R}^{2}$. By the CAT(0) inequality we have $\angle_{p}(t,s) \leq
\angle_{p''}(t'',s'')$, $\angle_{r}(t,s) \leq
\angle_{r''}(t'',s'')$.

Let $\triangle (p''',r''',s''')$ be a comparison triangle in
$\mathbf{R}^{2}$ for the geodesic triangle $\triangle (p,r,s)$ in $U$. Let $t''' \in [p''',r''']$
be a comparison point for $t \in [p,r]$.
Let $\triangle (q''',r''',s''')$ be a comparison triangle in
$\mathbf{R}^{2}$ for the geodesic triangle $\triangle (q,r,s)$ in $U$. We place the comparison
triangles $\triangle (p''',r''',s''')$ and $\triangle (q''',r''',s''')$ in
different half-planes with respect to the line $r'''s'''$ in
$\mathbf{R}^{2}$.

Because $\angle_{t'''}(p''',s''') +
\angle_{t'''}(s''',r''') = \pi$, Alexandrov's Lemma
guarantees that $\angle_{p''}(t'',s'') \leq
\angle_{p'''}(t''',s''')$, $\angle_{r''}(t'',s'') \leq
\angle_{r'''}(t''',s''')$.
The CAT(0) inequality ensures that $\angle_{q}(r,s) \leq
\angle_{q'''}(r''',s''')$ and $\angle_{r}(s,q) \leq
\angle_{r'''}(s''',q''')$.

Because $\angle_{s'}(p',r') +
\angle_{s'}(r',q') = \pi$, Alexandrov's Lemma implies $\angle_{p'''}(r''',s''') \leq
\angle_{p'}(r',s'), \angle_{r'''}(p''',s''') \leq \angle_{r'}(p',s'), \angle_{r'''}(s''',q''')
\leq
\angle_{r'}(s',q'), \angle_{q'''}(r''',s''') \leq \angle_{q'}(r',s')$.

Altogether it follows that $\angle_{r}(p,q) \leq \angle_{r}(p,s) +
\angle_{r}(s,q) \leq \angle_{r'}(p',s') +
\angle_{r'}(s',q') = \angle_{r'}(p',q')$, $\angle_{p}(r,s) \leq
\angle_{p'}(r',s')$ and $\angle_{q}(r,s) \leq
\angle_{q'}(r',s')$. So the geodesic triangle $\triangle (p,q,r)$ in $U'$ satisfies the CAT(0)
inequality.

\end{proof}

\begin{lemma}\label{3.27}
Let $r$ be a point in $U$ such that the geodesic segment $[r,q]$ does not intersect the
interior of $\sigma$ whereas the geodesic segment $[p,r]$ intersects the interior of
$\tau_{1}$ in $p_{2}$, and the interior of $\tau_{2}$ in
$r_{1}$. Let $s$ be a point on $e_{1}$ such that $\angle_{s}(a,p_{1}) =
\angle_{s}(b,q_{1})$ and $\angle_{s}(a,q_{1}) = \angle_{s}(b,p_{1})$. Let
$t$ be a point on $e_{2}$ such that $\angle_{t}(d,p_{2}) =
\angle_{t}(a,c)$ and $\angle_{t}(a,p_{2}) = \angle_{t}(d,c)$. Let
$v$ be a point on $e_{3}$ such that $\angle_{v}(c,d) =
\angle_{v}(a,r_{1})$ and $\angle_{v}(a,d) = \angle_{v}(c,r_{1})$. If $d'(p,q) =
d'(p,s) + d'(s,q)$ and $d'(p,r) = d'(p,t) + d'(t,v) + d'(v,r)$, then the geodesic triangle
$\triangle (p,q,r)$ in $U'$ satisfies the CAT(0) inequality.
\end{lemma}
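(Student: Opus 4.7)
The plan is to mimic the strategy of Lemma~\ref{3.25}, except that now the geodesic $[p,r]$ in $U'$ has two bending points ($t$ and $v$) instead of one, so one extra application of Alexandrov's Lemma is needed to straighten it. Existence and uniqueness of $s,t,v$ follow from Lemma~\ref{3.7}, and by Lemma~\ref{3.17} the geodesic from $p$ to $q$ in $U'$ is $[p,s]\cup[s,q]$ while that from $p$ to $r$ in $U'$ is $[p,t]\cup[t,v]\cup[v,r]$.

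First I would set up the ambient comparison triangle $\triangle(p',q',r')$ in $\mathbf{R}^{2}$ for $\triangle(p,q,r)$ in $U'$, with $s'\in[p',q']$ the comparison point for $s$ and $t',v'\in[p',r']$ the comparison points for $t$ and $v$. Next I would introduce four ``inner'' comparison triangles in $\mathbf{R}^{2}$ for the geodesic triangles $\triangle(p,t,s)$, $\triangle(t,v,s)$, $\triangle(v,r,s)$ and $\triangle(r,q,s)$ in $U$, all of whose sides avoid the interior of $\sigma$ and are therefore geodesic in both $U$ and $U'$ with the same lengths. The CAT(0) inequality applied inside $U$ bounds the Alexandrov angles of these four triangles by the corresponding Euclidean angles of their comparison triangles.

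The heart of the argument is to promote these local bounds to bounds in the ambient triangle by successive Alexandrov's Lemma steps. First, I would glue the two comparison triangles incident to $t$ along $[t'',s'']$ and replace them by a single comparison triangle for $\triangle(p,v,s)$ with a comparison point $\hat{t}$ for $t$ on its ``long'' side; the equality $\angle_{\hat{t}}(\hat{p},\hat{s})+\angle_{\hat{t}}(\hat{s},\hat{v})=\pi$ at $\hat{t}$ lets Alexandrov's Lemma upgrade the two local CAT(0) bounds into bounds for the new triangle. Repeating the same step with $v$ as bending point converts the picture into a comparison triangle for $\triangle(p,r,s)$ with a comparison point for $v$ on the side $[p''',r''']$. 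A parallel single application to the fourth inner triangle handles $\triangle(q,r,s)$. Finally, gluing these two comparison triangles across $[r''',s''']$ in opposite half-planes of $\mathbf{R}^{2}$ and applying Alexandrov's Lemma one last time, using $\angle_{s'}(p',r')+\angle_{s'}(r',q')=\pi$, transports all the bounds into $\triangle(p',q',r')$, yielding $\angle_{p}(r,q)\leq\angle_{p'}(r',q')$, $\angle_{q}(p,r)\leq\angle_{q'}(p',r')$ and $\angle_{r}(p,q)\leq\angle_{r'}(p',q')$, i.e.\ the CAT(0) inequality for $\triangle(p,q,r)$ in $U'$.

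The main obstacle is the bookkeeping across three nested levels of comparison triangles: at each level one must verify that the angle-sum condition of Alexandrov's Lemma (equality to $\pi$ at the relevant comparison point, first of $t$, then of $v$, then of $s$) is satisfied, and one must check that every intermediate segment used in $U$ (notably $[s,t]$, $[s,v]$ and $[s,r]$) stays out of the interior of $\sigma$, so that its length agrees in $U$ and in $U'$ and the distance-decomposition formulas used in defining the comparison points are consistent.
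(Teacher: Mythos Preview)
Your plan is correct and uses the same iterated-Alexandrov mechanism as the paper, but the gluing order is genuinely different. After the common first step (straightening $t$ to pass from the pair $\triangle(p,t,s)$, $\triangle(t,s,v)$ to a comparison triangle for $\triangle(p,v,s)$), you next straighten $v$ by gluing with $\triangle(v,r,s)$ along $[v,s]$, and finally straighten $s$ by gluing with $\triangle(r,q,s)$ along $[r,s]$. The paper instead straightens $s$ next, gluing $\triangle(p,v,s)$ with $\triangle(q,v,s)$ along $[v,s]$ to reach a comparison for $\triangle(p,v,q)$, and only then straightens $v$ by gluing with $\triangle(v,q,r)$ along $[v,q]$. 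In other words, the paper interleaves the two broken geodesics (it resolves $s$ before finishing $[p,r]$), while you fully resolve $[p,r]$ first and handle $s$ last. Both routes require three Alexandrov steps and land on the same target triangle $\triangle(p',q',r')$; the only practical difference is which auxiliary diagonal appears: $[v,q]$ for the paper, $[r,s]$ for you. Your ordering has the mild advantage of being more systematic (one broken side at a time), which would scale more transparently to geodesics with more bending points.

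One remark on your stated obstacle: you do not actually need $[s,r]$ to avoid the interior of $\sigma$. That segment enters only as the shared side of the two Euclidean comparison triangles in your last gluing, both carrying the same $U$-length $d(s,r)$; this length cancels and never has to coincide with $d'(s,r)$. What does need checking at each step is the hypothesis of Alexandrov's Lemma, namely that the angle sum at the vertex being absorbed ($t$, then $v$, then $s$) is at least $\pi$ in the glued Euclidean picture; this is exactly what Corollary~A (together with the CAT(0) inequality in $U$) provides, just as in the paper's argument.
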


\begin{proof}\label{3.28}

Because $U$ is a CAT(0) space, by Lemma \ref{3.7}, the points $s,t$ and $v$ exist and they are
unique.

\begin{figure}[h]
   \begin{center}
     \includegraphics[height=4cm]{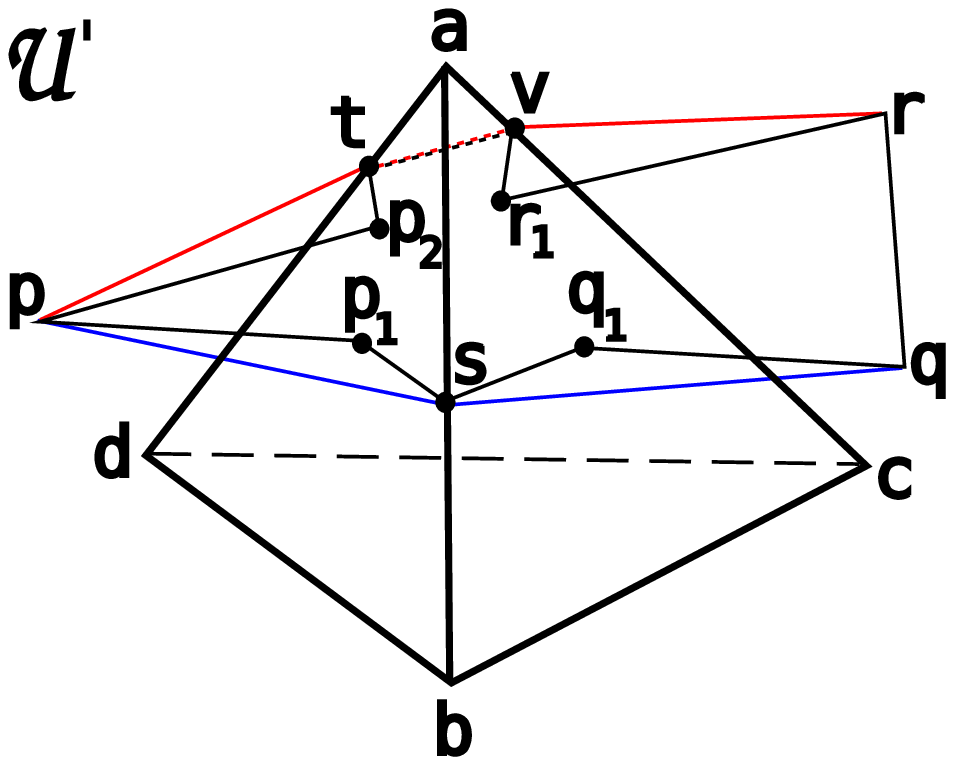}
         \caption{The geodesic triangle $\triangle(p,q,r)$ in $U'$ satisfies the CAT(0) inequality}
 \end{center}
\end{figure}

Let $\triangle (p',q',r')$ be a comparison triangle in $\mathbf{R}^{2}$ for the geodesic triangle
$\triangle (p,q,r)$ in $U'$. Let $v' \in [p',r']$ be a comparison point for $v \in [p,r]$.

Let $\triangle (p'',t'',s'')$ be a comparison triangle in
$\mathbf{R}^{2}$ for the geodesic triangle $\triangle (p,t,s)$ in $U$. Let $\triangle
(t'',s'',v'')$ be a comparison triangle in $\mathbf{R}^{2}$ for the geodesic triangle
$\triangle (t,s,v)$ in $U$. We place the comparison triangles
$\triangle (p'',t'',s'')$ and $\triangle (t'',s'',v'')$ in
different half-planes with respect to the line $t''s''$ in
$\mathbf{R}^{2}$. The CAT(0) inequality ensures that $\angle_{p}(t,s) \leq
\angle_{p''}(t'',s'')$.

Let $\triangle (p''',v''',s''')$ be a comparison triangle in
$\mathbf{R}^{2}$ for the geodesic triangle $\triangle (p,v,s)$ in $U$. Let $\triangle
(q''',v''',s''')$ be a comparison triangle in
$\mathbf{R}^{2}$ for geodesic triangle $\triangle (q,v,s)$ in $U$. We place the comparison
triangles $\triangle (p''',v''',s''')$ and $\triangle (q''',v''',s''')$ in
different half-planes with respect to the line $v'''s'''$ in
$\mathbf{R}^{2}$. Let $t''' \in [p''',v''']$ such that $d_{\mathbf{R}^{2}}(p''',t''') = d(p,t)$.

Because $\angle_{t'''}(p''',s''') +
\angle_{t'''}(s''',v''') = \pi$, Alexandrov's Lemma implies that $\angle_{p''}(t'',s'') \leq
\angle_{p'''}(t''',s''')$.

Let $\triangle (p'^{V},v'^{V},q'^{V})$ be a comparison triangle in
$\mathbf{R}^{2}$ for the geodesic triangle $\triangle (p,v,q)$ in $U$. Let $s'^{V} \in [p'^{V},
q'^{V}]$ be a comparison triangle in $\mathbf{R}^{2}$ for $s \in [p,q]$. Let $\triangle
(v'^{V},q'^{V},r'^{V})$ be a
comparison triangle in $\mathbf{R}^{2}$ for the geodesic triangle $\triangle (v,q,r)$ in $U$. We
place the comparison triangles $\triangle (p'^{V},v'^{V},q'^{V})$ and
$\triangle (v'^{V},q'^{V},r'^{V})$ in
different half-planes with respect to the line $v'^{V}q'^{V}$ in
$\mathbf{R}^{2}$.

Because $\angle_{s'^{V}}(p'^{V}, v'^{V}) + \angle_{s'^{V}}(v'^{V},
q'^{V}) = \pi$ and $\angle_{v'}(p',q') + \angle_{v'}(q',r') = \pi$, Alexandrov's Lemma ensures
that $\angle_{p'''}(v''', s''') \leq
\angle_{p'^{V}}(v'^{V}, s'^{V})$ and $\angle_{p'^{V}}(v'^{V},q'^{V}) \leq \angle_{p'}(v',q')$.

Altogether it follows that in $U'$ we have $\angle_{p}(t,s) = \angle_{p}(r,q) \leq
\angle_{p'}(r',q')$. One can similarly show that $\angle_{r}(p,q) \leq
\angle_{r'}(p',q')$, $\angle_{q}(p,r) \leq
\angle_{q'}(p',r')$. So the geodesic triangle $\triangle (p,q,r)$ in $U'$ satisfies the CAT(0)
inequality.

\end{proof}

\begin{lemma}\label{3.29}
Let $r$ be a point in $U$ such that the geodesic segment $[r,q]$ does not intersect the interior
of
$\sigma$ whereas the geodesic segment $[p,r]$ intersects the interior of $\tau_{1}$ in
$p_{2}$, and the interior of $\tau_{3}$ in $r_{1}$. Let $s$ be a point on $e_{1}$ such that
$\angle_{s}(a,p_{1}) = \angle_{s}(b,q_{1})$ and $\angle_{s}(a,q_{1}) = \angle_{s}(b,p_{1})$. Let
$t$ be a point on $e_{1}$ such that $\angle_{t}(a,p_{2}) =
\angle_{t}(b,c)$ and $\angle_{t}(a,c) = \angle_{t}(p_{2},b)$. Let
$v$ be a point on $e_{3}$ such that $\angle_{v}(b,c) =
\angle_{v}(a,r_{1})$ and $\angle_{v}(b,a) = \angle_{v}(c,r_{1})$. If $d'(p,q) =
d'(p,s) + d'(s,q)$ and $d'(p,r) = d'(p,t) + d'(t,v) + d'(v,r)$, then the geodesic triangle
$\triangle (p,q,r)$ in $U'$ satisfies the CAT(0) inequality.
\end{lemma}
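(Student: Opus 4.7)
By Lemma \ref{3.7} the points $s, t, v$ exist and are unique. Applying Lemma \ref{3.17} to the pair $(p,q)$ identifies $[p,q]$ in $U'$ as $[p,s]\cup[s,q]$, and applied to $(p,r)$ it identifies $[p,r]$ in $U'$ as $[p,t]\cup[t,v]\cup[v,r]$; moreover $d'(q,r)=d(q,r)$ since $[r,q]$ in $U$ does not meet the interior of $\sigma$. To verify the CAT(0) inequality for $\triangle(p,q,r)\subset U'$ it suffices to establish $\angle_{p}(q,r)\leq \angle_{p'}(q',r')$, $\angle_{q}(p,r)\leq \angle_{q'}(p',r')$, and $\angle_{r}(p,q)\leq \angle_{r'}(p',q')$, where $\triangle(p',q',r')\subset \mathbf{R}^{2}$ is a comparison triangle for $\triangle(p,q,r)$, $s'\in[p',q']$ is a comparison point for $s$, and $t',v'\in[p',r']$ are comparison points for $t$ and $v$.

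The plan is to follow the strategy of Lemma \ref{3.27} essentially verbatim, because the underlying configuration is the same: the geodesic $[p,q]$ in $U'$ detours around the deleted tetrahedron through a single auxiliary vertex $s$, while $[p,r]$ detours through two auxiliary vertices $t, v$. I would subdivide the quadrilateral-like region bounded by the three $U'$-geodesics into the sub-triangles $\triangle(p,t,s)$, $\triangle(t,s,v)$, $\triangle(p,v,s)$, $\triangle(q,v,s)$, $\triangle(p,v,q)$, and $\triangle(v,q,r)$, each of which lies inside the CAT(0) space $U$ and thus admits a comparison triangle in $\mathbf{R}^{2}$. These comparison triangles would be assembled into three hinged pairs in $\mathbf{R}^{2}$: $\triangle(p'',t'',s'')$ and $\triangle(t'',s'',v'')$ glued along $t''s''$; $\triangle(p''',v''',s''')$ and $\triangle(q''',v''',s''')$ glued along $v'''s'''$; and $\triangle(p'^{V},v'^{V},q'^{V})$ and $\triangle(v'^{V},q'^{V},r'^{V})$ glued along $v'^{V}q'^{V}$, in each case with the two free vertices placed in opposite half-planes.

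The chain of angle inequalities then proceeds in the standard pattern. At every stage, the CAT(0) inequality inside $U$ bounds a vertex angle of a geodesic sub-triangle by the corresponding angle in its planar comparison triangle. To carry that bound across a hinge, one inserts an auxiliary point on a line segment inside the next planar triangle so that the relevant angle-sum equals $\pi$ automatically, at which point Alexandrov's Lemma transfers the inequality. Concretely, choosing $t'''\in[p''',v''']$ with $d_{\mathbf{R}^{2}}(p''',t''')=d(p,t)$ gives $\angle_{t'''}(p''',s''')+\angle_{t'''}(s''',v''')=\pi$; taking $s'^{V}\in[p'^{V},q'^{V}]$ as a comparison point for $s\in[p,q]$ gives $\angle_{s'^{V}}(p'^{V},v'^{V})+\angle_{s'^{V}}(v'^{V},q'^{V})=\pi$; and $v'\in[p',r']$ gives $\angle_{v'}(p',q')+\angle_{v'}(q',r')=\pi$. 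Three successive applications of Alexandrov's Lemma across these hinges then yield $\angle_{p}(q,r)=\angle_{p}(t,s)\leq \angle_{p'}(q',r')$. Symmetric traversals of the same fan, initiated by the CAT(0) inequalities applied to the sub-triangles based at $q$ and $r$, produce the remaining two vertex bounds.

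The main obstacle will be the combinatorial bookkeeping: six planar auxiliary triangles, three hinges, and several chained angle inequalities, all of which must be placed consistently in $\mathbf{R}^{2}$. One must verify at every hinge that the chosen intermediate comparison point indeed satisfies the $=\pi$ angle-sum condition required by Alexandrov's Lemma, and that the implied planar triangles have mutually consistent side lengths. A subtlety specific to Lemma \ref{3.29} is that $s$ and $t$ both lie on the common edge $e_{1}$; one must check that this does not force degeneracy of the first hinged pair $\triangle(p'',t'',s'')\cup\triangle(t'',s'',v'')$, which is guaranteed because Lemma \ref{3.7} produces $s$ and $t$ as distinct points on $e_{1}$ (they are defined by different angle-equality conditions) and Property A excludes the coincidence of two equal-length geodesic detours in $U'$ from $p$ to $q$.
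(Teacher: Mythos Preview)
Your proposal is correct and follows essentially the same approach as the paper: the same six sub-triangles in $U$, the same three hinged pairs in $\mathbf{R}^{2}$ (along $t''s''$, $v'''s'''$, and $v'^{V}q'^{V}$), the same auxiliary points $t'''\in[p''',v''']$, $s'^{V}\in[p'^{V},q'^{V}]$, $v'\in[p',r']$, and the same three-step chain of Alexandrov's Lemma applications yielding $\angle_{p}(t,s)\leq\angle_{p'}(q',r')$. The paper likewise dispatches the other two vertex angles by symmetry without further detail, and does not explicitly address the degeneracy concern you raise at the end.
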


\begin{proof}\label{3.30}

Because $U$ is a CAT(0) space, by Lemma \ref{3.7}, the points $s,t$ and $v$ exist and they
are unique.

Let $\triangle (p',q',r')$ be a comparison triangle in $\mathbf{R}^{2}$ for the geodesic triangle
$\triangle (p,q,r)$ in $U'$. Let $s' \in [p',q']$ be a comparison point for $s \in [p,q]$ and let
$v' \in [p',r']$ be a comparison point for $v \in [p,r]$.

\begin{figure}[h]
   \begin{center}
     \includegraphics[height=4cm]{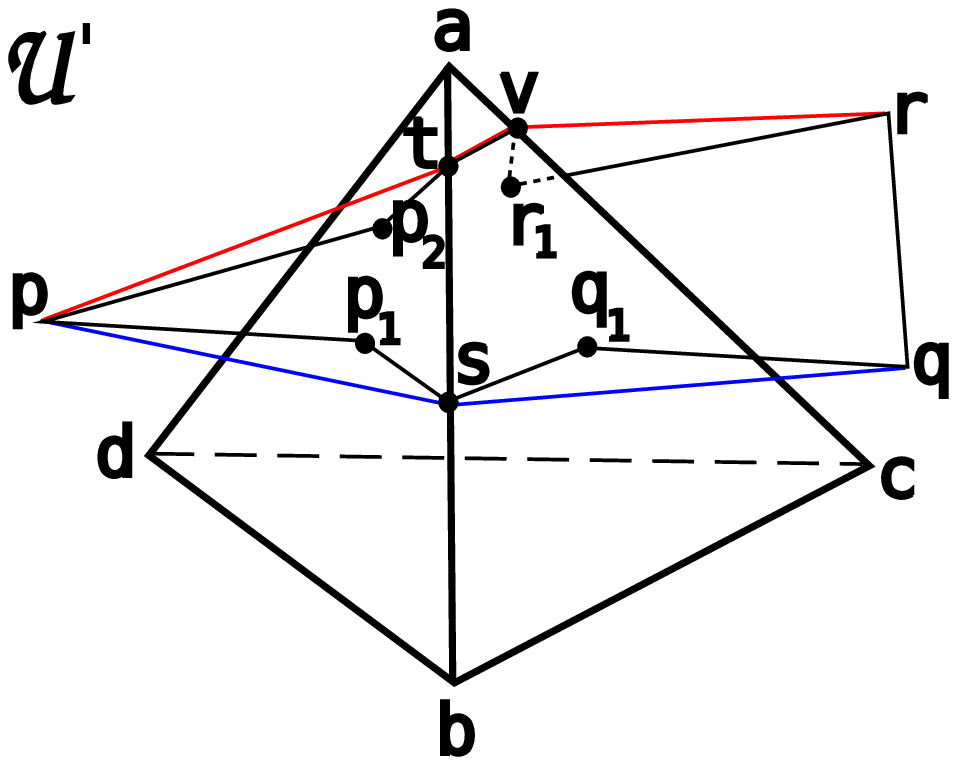}
         \caption{The geodesic triangle $\triangle(p,q,r)$ in $U'$ satisfies the CAT(0) inequality}
 \end{center}
\end{figure}

Let $\triangle (p'',s'',t'')$ be a comparison triangle in $\mathbf{R}^{2}$ for the geodesic
triangle $\triangle (p,s,t)$ in $U$. Let $\triangle (s'',t'',v'')$ be a comparison triangle in
$\mathbf{R}^{2}$ for the geodesic triangle $\triangle (s,t,v)$ in $U$.  We place the comparison
triangles $\triangle (p'',s'',t'')$ and $\triangle (s'',t'',v'')$ in different half-planes with
respect to the line $s''t''$ in $\mathbf{R}^{2}$. The CAT(0) inequality implies
$\angle_{p}(t,s) \leq \angle_{p''}(t'',s'').$

Let $\triangle (p''', v''',s''')$ be a comparison triangle in $\mathbf{R}^{2}$ for the geodesic
triangle $\triangle (p,v,s)$ in $U$. Let $\triangle (q''', s''', v''')$ be a comparison triangle
in $\mathbf{R}^{2}$ for the geodesic triangle $\triangle (q,s,v)$ in $U$.  We place the comparison
triangles $\triangle (p''', v''',s''')$ and $\triangle (q''', s''', v''')$ in different
half-planes with respect to the line $s'''v'''$ in $\mathbf{R}^{2}$. Let $t''' \in [p''',v''']$
be a comparison point for $t \in [p,v]$. Because $\angle_{t'''}(p''',s''') +
\angle_{t'''}(s''',v''') = \pi$, Alexandrov's Lemma ensures that
$\angle_{p''}(t'',s'') \leq \angle_{p'''}(t''',s''').$

Let $\triangle (p'^{V}, v'^{V},q'^{V})$ be a comparison triangle in $\mathbf{R}^{2}$ for the
geodesic triangle $\triangle (p,v,q)$ in $U$. Let $\triangle (v'^{V},q'^{V},r'^{V})$ be a
comparison triangle in $\mathbf{R}^{2}$ for the geodesic triangle $\triangle (v,q,r)$ in $U$.  We
place the comparison triangles $\triangle (p'^{V},v'^{V},q'^{V})$ and $\triangle
(v'^{V},q'^{V},r'^{V})$ in different half-planes with respect to the line $v'^{V}q'^{V}$ in
$\mathbf{R}^{2}$. Let $s'^{V} \in [p'^{V},q'^{V}]$ be a comparison point for $s \in [p,q]$.
Because $\angle_{s'^{V}}(p'^{V}, v'^{V}) + \angle_{s'^{V}}(v'^{V}, q'^{V}) = \pi$ and
$\angle_{v'}(p', q') + \angle_{v'}(q', r') = \pi$, Alexandrov's Lemma further
implies
$\angle_{p'''}(s''',v''') \leq \angle_{p'^{V}}(q'^{V},v'^{V})
\leq \angle_{p'}(q',r').$

Thus, $\angle_{p}(q,r) \leq \angle_{p'}(q',r').$ One can similarly show that
$\angle_{q}(r,p) \leq
\angle_{q'}(r',p')$ and $\angle_{r}(p,q) \leq
\angle_{r'}(p',q').$ So the geodesic triangle $\triangle(p,q,r)$ in $U'$ satisfies the
CAT(0) inequality.

\end{proof}

\begin{lemma}\label{3.33}
Let $r$ be a point in $U$ such that the geodesic segment $[r,q]$ does not intersect the
interior of $\sigma$
whereas the geodesic segment $[p,r]$ intersects the interior of $\tau_{1}$ in $p_{2}$, and
the interior of $\tau_{2}$ in
$r_{1}$. Let $s$ be a point on $e_{3}$ such that $\angle_{s}(a,q_{1}) =
\angle_{s}(b,c)$ and $\angle_{s}(a,b) = \angle_{s}(c,q_{1})$. Let
$t$ be a point on $e_{2}$ such that $\angle_{t}(a,p_{1}) =
\angle_{t}(c,d)$ and $\angle_{t}(a,c) = \angle_{t}(d,p_{1})$.  Let
$u$ be a point on $e_{2}$ such that $\angle_{u}(a,p_{2}) =
\angle_{u}(c,d)$ and $\angle_{u}(c,a) = \angle_{u}(d,p_{2})$. Let
$v$ be a point on $e_{3}$ such that $\angle_{v}(d,c) =
\angle_{v}(a,r_{1})$ and $\angle_{v}(d,a) = \angle_{v}(c,r_{1})$. If $d'(p,q) =
d'(p,t) + d'(t,s) + d'(s,q)$ and $d'(p,r) = d'(p,u) + d'(u,v) + d'(v,r)$, then the geodesic
triangle
$\triangle (p,q,r)$ in $U'$ satisfies the CAT(0) inequality.
\end{lemma}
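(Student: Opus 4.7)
The plan is to mimic the structure of Lemmas \ref{3.27} and \ref{3.29}, but carrying out the unfolding on both the $[p,q]$-side and the $[p,r]$-side simultaneously, since both geodesics in $U'$ now bend at two intermediate points rather than one. First, Lemma \ref{3.7} produces the points $s,t,u,v$ and guarantees their uniqueness; Lemma \ref{3.17} (combined with the hypotheses) then tells us that the geodesic segment $[p,q]$ in $U'$ is $[p,t]\cup[t,s]\cup[s,q]$ and the geodesic segment $[p,r]$ in $U'$ is $[p,u]\cup[u,v]\cup[v,r]$. I would fix a comparison triangle $\triangle(p',q',r')$ in $\mathbf{R}^{2}$ for $\triangle(p,q,r)$ in $U'$, together with comparison points $t',s'\in[p',q']$ (for $t,s$) and $u',v'\in[p',r']$ (for $u,v$).

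Next I would construct the auxiliary comparison triangles in $\mathbf{R}^{2}$ for the pieces of the triangle that live inside $U$ (where the CAT(0) inequality is directly available). Along the $q$-side I would build comparison triangles for $\triangle(p,t,s)$, $\triangle(t,s,q)$ or rather for $\triangle(p,s,q)$ and $\triangle(r,s,q)$, and glue them across shared edges placed in opposite half-planes, exactly as in the proof of Lemma \ref{3.29}. Along the $r$-side I would build comparison triangles for $\triangle(p,u,v)$, $\triangle(p,v,r)$, and $\triangle(r,v,q)$, again glued across shared edges in opposite half-planes. Finally, on the ``mixed'' piece I would take a comparison triangle for $\triangle(p,s,v)$, or for $\triangle(q,s,v)$, to bridge between the two chains. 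The role of each glued pair is that at the shared vertex (one of $t,s,u,v$) the sum of Alexandrov angles equals $\pi$, so Alexandrov's Lemma (Lemma \ref{1.1.25}) applies.

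Then I would iterate the standard estimate: at each intermediate vertex of a glued pair, the sum of Alexandrov angles in $U$ is $\pi$, so Alexandrov's Lemma gives an inequality between the base-vertex angle of the glued configuration and the corresponding angle of a single comparison triangle on the ``longer'' path. Chaining these along with the CAT(0) inequality $\angle_{x}(y,z)\le\overline{\angle}_{x}(y,z)$ for subtriangles in $U$, and finally using Alexandrov's Lemma at $s'$ on $[p',q']$ and at $v'$ on $[p',r']$ (where the sums of angles equal $\pi$ by construction of the comparison triangle $\triangle(p',q',r')$), I obtain
\begin{center}$\angle_{p}(q,r)\le\angle_{p'}(q',r'),\quad \angle_{q}(p,r)\le\angle_{q'}(p',r'),\quad \angle_{r}(p,q)\le\angle_{r'}(p',q').$\end{center}
By the characterization of CAT(0) via angle comparison (\cite{bridson_1999}, II.1), this proves the CAT(0) inequality for $\triangle(p,q,r)$ in $U'$.

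The main obstacle is purely bookkeeping: with four break-points $s,t,u,v$ distributed on two different edges $e_{2},e_{3}$, one must carefully order the glued comparison triangles and check that at each stage the angle-sum-equals-$\pi$ hypothesis of Alexandrov's Lemma is satisfied, so that the inequalities chain in the right direction. In particular, the bridging step between the $q$-side chain and the $r$-side chain (passing through a comparison triangle containing both $s$ and $v$) is the delicate point, since neither $s$ nor $v$ is a vertex of $\triangle(p,q,r)$; but since the geodesics $[p,q]$ and $[p,r]$ in $U'$ both emanate from $p$, using $p$ as the common apex of all the glued configurations organizes the estimates as in Lemma \ref{3.29}.
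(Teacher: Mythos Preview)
Your approach is essentially the same as the paper's: iterate Alexandrov's Lemma with $p$ as the common apex, chaining comparison triangles from the pieces of the broken geodesics, then conclude via angle comparison. Where you are vague, the paper is concrete: it starts from the \emph{innermost} triangle $\triangle(p,t,u)$ (with $t,u$ the first break-points on $[p,q]$ and $[p,r]$ away from $p$), applies CAT(0) once to get $\angle_{p}(t,u)\le\angle_{p''}(t'',u'')$, and then unfolds outward in a single linear chain
\[
\triangle(p,t,u)\ \to\ \triangle(p,t,v)\ \to\ \triangle(p,s,v)\ \to\ \triangle(p,q,v)\ \to\ \triangle(p,q,r),
\]
each arrow being one Alexandrov's Lemma step (straightening $u$ on $[p,v]$, then $t$ on $[p,s]$, then $s$ on $[p,q]$, then $v$ on $[p,r]$). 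Your list of auxiliary triangles (e.g.\ putting $\triangle(r,s,q)$ on ``the $q$-side'' and handling each side separately before bridging) is a bit tangled and does not obviously assemble into a monotone chain; the paper's inside-out ordering avoids the bridging difficulty you flag, since $\triangle(p,s,v)$ appears naturally as an intermediate stage rather than as a separate splice. Once you adopt that ordering, your outline goes through verbatim.
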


\begin{proof}\label{3.34}

Because $U$ is a CAT(0) space, by Lemma \ref{3.7}, the points $s,t,u$ and $v$ exist and they are
unique.

\begin{figure}[h]
   \begin{center}
     \includegraphics[height=4cm]{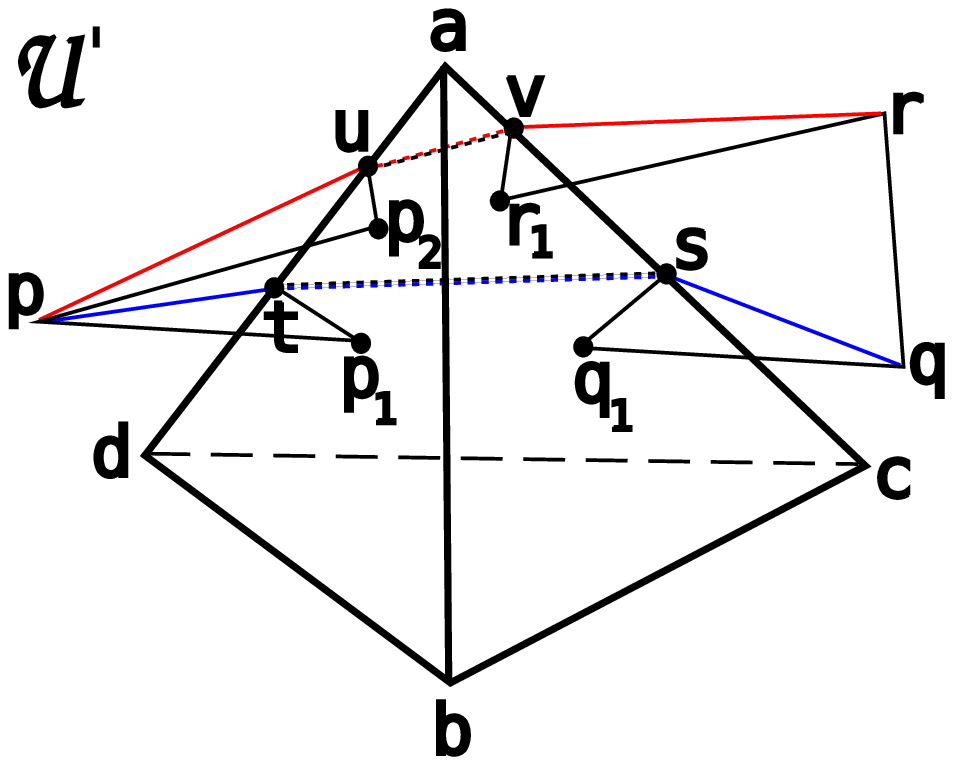}
        \caption{The geodesic triangle $\triangle(p,q,r)$ in $U'$ satisfies the CAT(0) inequality}
 \end{center}
\end{figure}

Let $\triangle (p',q',r')$ be a comparison triangle in $\mathbf{R}^{2}$ for the geodesic triangle
$\triangle (p,q,r)$ in $U'$.

Let $\triangle (p'',t'',u'')$ be a comparison triangle in $\mathbf{R}^{2}$ for the geodesic
triangle $\triangle (p,t,u)$ in $U$. Let $\triangle (t'',u'',v'')$ be a comparison triangle in
$\mathbf{R}^{2}$ for the geodesic triangle $\triangle (t,u,v)$ in $U$.  We place the comparison
triangles $\triangle (p'',t'',u'')$ and $\triangle (t'',u'',v'')$ in different half-planes with
respect to the line $t''u''$ in $\mathbf{R}^{2}$. By the CAT(0) inequality we have
$\angle_{p}(t,u) \leq \angle_{p''}(t'',u'').$

Let $\triangle (p''',t''',v''')$ be a comparison triangle in $\mathbf{R}^{2}$ for the geodesic
triangle $\triangle (p,t,v)$ in $U$. Let $\triangle (t''',v''',s''')$ be a comparison triangle
in $\mathbf{R}^{2}$ for the geodesic triangle $\triangle (t,v,s)$ in $U$.  We place the comparison
triangles $\triangle (p''',t''',v''')$ and $\triangle (t''',v''',s''')$ in different
half-planes with respect to the line $v'''t'''$ in $\mathbf{R}^{2}$. Alexandrov's Lemma implies that
$\angle_{p''}(t'',u'') \leq \angle_{p'''}(t''',v''').$

Let $\triangle (p'^{V}, s'^{V},v'^{V})$ be a comparison triangle in $\mathbf{R}^{2}$ for the
geodesic triangle $\triangle (p,s,v)$ in $U$. Let $\triangle (s'^{V},v'^{V}, q'^{V})$ be a
comparison triangle in $\mathbf{R}^{2}$ for the geodesic triangle $\triangle (s,v,q)$ in $U$.  We
place the comparison triangles $\triangle (p'^{V}, s'^{V},v'^{V})$ and $\triangle
(s'^{V},v'^{V}, q'^{V})$ in different half-planes with respect to the line $s'^{V}v'^{V}$ in
$\mathbf{R}^{2}$. Alexandrov's Lemma implies
$\angle_{p'''}(t''',v''') \leq \angle_{p'^{V}}(s'^{V},v'^{V}).$

Let $\triangle (p^{V}, q^{V},v^{V})$ be a comparison triangle in $\mathbf{R}^{2}$ for the
geodesic triangle $\triangle (p,q,v)$ in $U$. Let $\triangle (r^{V},q^{V},v^{V})$ be a
comparison triangle in $\mathbf{R}^{2}$ for the geodesic triangle $\triangle (r,q,v)$ in $U$.  We
place the comparison triangles $\triangle (p^{V}, q^{V},v^{V})$ and $\triangle
(r^{V},q^{V},v^{V})$ in different half-planes with respect to the line $q^{V}v^{V}$ in
$\mathbf{R}^{2}$. Alexandrov's Lemma ensures that
$\angle_{p'^{V}}(s'^{V},v'^{V}) \leq \angle_{p^{V}}(q^{V},v^{V}) \leq
\angle_{p'}(q',r').$

Altogether we have $\angle_{p}(q,r) \leq \angle_{p'}(q',r').$ One can show similarly that
$\angle_{q}(p,r) \leq \angle_{q'}(p',r')$ and
$\angle_{r}(p,q) \leq \angle_{r'}(p',q').$ So the geodesic triangle
$\triangle(p,q,r)$ in $U'$ satisfies the CAT(0) inequality.

\end{proof}

\begin{lemma}\label{3.35}
Let $r$ be a point in $U$ such that the geodesic segment $[r,q]$ intersects the interior
of $\tau_{3}$  in $r_{2}$, and the interior of $\tau_{2}$ in $q_{2}$ whereas the geodesic
segment $[p,r]$ intersects the interior of $\tau_{1}$ in $p_{2}$ and the interior of $\tau_{3}$
in $r_{1}$. Let $s$ be a point on $e_{1}$ such that $\angle_{s}(a,q_{1}) =
\angle_{s}(b,p_{1})$ and $\angle_{s}(a,p_{1}) = \angle_{s}(b,q_{1})$. Let
$t$ be a point on $e_{2}$ such that $\angle_{t}(a,p_{2}) =
\angle_{t}(d,r_{1})$ and $\angle_{t}(a,r_{1}) = \angle_{t}(d,p_{2})$. Let
$u$ be a point on $e_{3}$ such that $\angle_{u}(a,r_{2}) =
\angle_{u}(c,q_{2})$ and $\angle_{u}(a,q_{2}) = \angle_{u}(c,r_{2})$. If $d'(p,q) =
d'(p,s) + d'(s,q)$, $d'(p,r) = d'(p,t) + d'(t,r)$ and $d'(r,q) = d'(r,u) + d'(u,q)$, then the
geodesic triangle $\triangle (p,q,r)$ in $U'$ satisfies the CAT(0) inequality.
\end{lemma}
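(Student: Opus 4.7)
The plan is to follow the pattern of Lemmas \ref{3.19}--\ref{3.33}. First I would invoke Lemma \ref{3.7} three times, once for each of the edges $e_{1}, e_{2}, e_{3}$, to obtain the existence and uniqueness of the points $s, t, u$ (justified by $U$ being a complete CAT(0) space); the decompositions of the three $U'$-geodesics through $s, t, u$ are given by hypothesis, namely $[p,q]_{U'} = [p,s] \cup [s,q]$, $[p,r]_{U'} = [p,t] \cup [t,r]$, and $[r,q]_{U'} = [r,u] \cup [u,q]$. I would then introduce a comparison triangle $\triangle(p',q',r')$ in $\mathbf{R}^{2}$ for $\triangle(p,q,r)$ in $U'$, together with comparison points $s' \in [p',q']$, $t' \in [p',r']$, and $u' \in [r',q']$. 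The aim is to bound each vertex angle of $\triangle(p,q,r)$ in $U'$ by the corresponding Euclidean angle; since the data cycle under $(p,q,r,s,t,u) \mapsto (q,r,p,u,s,t)$, it suffices to carry out the argument at $p$ and invoke ``one can similarly show'' at $q$ and $r$.

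For the angle at $p$, I first observe that $\angle_{p}^{U'}(q,r) = \angle_{p}(s,t)$, because $s$ and $t$ are the first interior points of the $U'$-geodesics from $p$ to $q$ and from $p$ to $r$, and the initial subsegments $[p,s]$ and $[p,t]$ coincide with their $U$-counterparts (which do not enter the interior of $\sigma$). Applying the CAT(0) inequality in $U$ to the sub-triangle $\triangle(p,s,t)$ gives $\angle_{p}(s,t) \leq \angle_{p''}(s'',t'')$, where $\triangle(p'',s'',t'')$ is the Euclidean comparison triangle. From there I would assemble a chain of auxiliary comparison triangles in $\mathbf{R}^{2}$ for sub-geodesic triangles of the form $\triangle(s,t,q)$, $\triangle(p,s,q)$, $\triangle(s,q,r)$, and so on, placing each successive pair on opposite sides of a shared edge, and repeatedly apply Alexandrov's Lemma (Lemma \ref{1.1.25}). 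At each step the newly introduced comparison point lies on a straight segment of the next ``larger'' comparison triangle, so the angle-sum hypothesis at that point equals $\pi$; Alexandrov's Lemma then delivers an inequality of the correct sign. After two such swaps -- first exchanging $s$ for $q$ (using $s \in [p,q]_{U'}$), then exchanging $t$ for $r$ (using $t \in [p,r]_{U'}$) -- the chain produces $\angle_{p''}(s'',t'') \leq \dots \leq \angle_{p'}(q',r')$.

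The main obstacle I expect is combinatorial bookkeeping rather than any genuinely new analytic step. Because every side of $\triangle(p,q,r)$ in $U$ cuts through $\sigma$ and must be rerouted around a different edge, the auxiliary comparison triangles appearing in the chain are less symmetric than in, for instance, Lemma \ref{3.23} (where both bends lie on the single edge $e_{1}$). At each link of the chain I must check that the two sub-triangles are juxtaposed in the correct half-planes relative to their common edge, and that the right comparison point is used on the straight side of the larger triangle, so that Alexandrov's Lemma applies in the required direction. Once the diagrams are correctly arranged, each individual inequality is either a CAT(0) comparison in $U$ or a direct invocation of Alexandrov's Lemma, exactly as in the preceding lemmas of the section, and the CAT(0) inequality for $\triangle(p,q,r)$ in $U'$ follows by cycling the same argument through the vertices $q$ and $r$.
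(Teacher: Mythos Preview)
Your plan is correct and mirrors the pattern of the more elaborate Lemmas~\ref{3.27}, \ref{3.29}, and \ref{3.33}: you correctly identify the $U'$-angle at $p$ as $\angle_{p}(s,t)$, bound it by CAT(0) in $U$ applied to $\triangle(p,s,t)$, and then climb through a chain of comparison triangles, swapping $s$ for $q$ and $t$ for $r$ by successive applications of Alexandrov's Lemma. That argument goes through, and the cyclic symmetry you note handles the other two vertices.

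The paper's own proof is much shorter and takes a different route. It introduces only the single comparison point $s'\in[p',q']$, forms comparison triangles for the $U$-triangles $\triangle(p,r,s)$ and $\triangle(r,s,q)$ on opposite sides of $r''s''$, applies CAT(0) once to get $\angle_{p}(r,s)\le\angle_{p''}(r'',s'')$, and then a single Alexandrov step (via $\angle_{s'}(p',r')+\angle_{s'}(r',q')=\pi$) to conclude $\angle_{p''}(r'',s'')\le\angle_{p'}(r',s')$. In effect it repeats the Lemma~\ref{3.19} argument verbatim.

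That shortcut is terse to the point of being problematic in this configuration. Alexandrov's Lemma, as stated in Lemma~\ref{1.1.25}, requires $|p''r''|=|p'r'|$ and $|r''q''|=|r'q'|$, but here $|p''r''|=d(p,r)<d'(p,r)=|p'r'|$ and likewise for the $r$--$q$ side, since both $U$-geodesics $[p,r]$ and $[r,q]$ pass through $\sigma$. Moreover the quantity bounded, $\angle_{p}(r,s)$ computed in $U$, is the angle between the $U$-geodesic $[p,r]$ and $[p,s]$, not the $U'$-angle $\angle_{p}(t,s)$ that the CAT(0) inequality in $U'$ actually requires. Your longer chain avoids both issues by working only with segments $[p,s],[s,q],[p,t],[t,r]$ that already lie in $U'$; in that sense your proposal is the more careful of the two, and is what a complete proof along the lines of Lemmas~\ref{3.27}--\ref{3.33} should look like.
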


\begin{proof}\label{3.36}

Because $U$ is a CAT(0) space, Lemma \ref{3.7} implies that the points $s,t$ and $u$ exist and they
are unique.

\begin{figure}[h]
   \begin{center}
     \includegraphics[height=4cm]{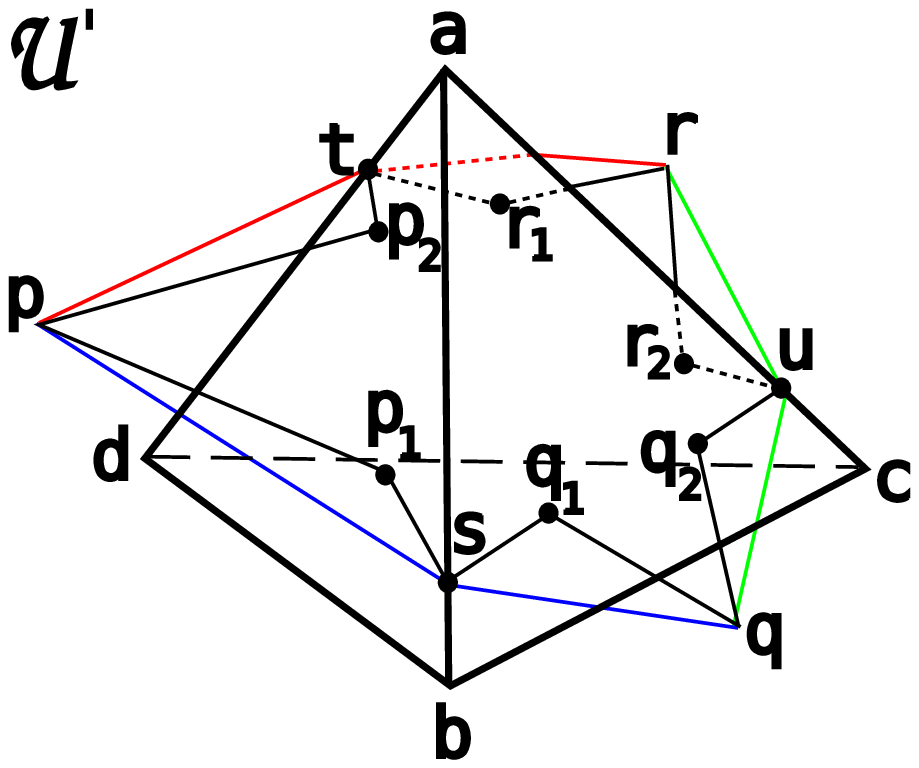}
         \caption{The geodesic triangle $\triangle(p,q,r)$ in $U'$ satisfies the CAT(0) inequality}
 \end{center}
\end{figure}

Let $\triangle (p',q',r')$ be a comparison triangle in $\mathbf{R}^{2}$ for the geodesic triangle
$\triangle (p,q,r)$ in $U'$. Let $s' \in [p',q']$ be a comparison point for $s \in [p,q]$.

Let $\triangle (p'',r'',s'')$ be a comparison triangle in $\mathbf{R}^{2}$ for the geodesic
triangle $\triangle (p,r,s)$ in $U$. Let $\triangle (r'',s'',q'')$ be a comparison triangle in
$\mathbf{R}^{2}$ for the geodesic triangle $\triangle (r,s,q)$ in $U$.  We place the comparison
triangles $\triangle (p'',r'',s'')$ and $\triangle (r'',s'',q'')$ in different half-planes with
respect to the line $r''s''$ in $\mathbf{R}^{2}$. The CAT(0) inequality implies that
$\angle_{p}(r,s) \leq \angle_{p''}(r'',s'')$.
Because $\angle_{s'}(p',r') + \angle_{s'}(r',q')
= \pi$, Alexandrov's Lemma implies $\angle_{p''}(r'',s'') \leq \angle_{p'}(r',s')$. So
$\angle_{p}(r,s) \leq \angle_{p'}(r',s')$.
One can similarly show that $\angle_{q}(p,r) \leq
\angle_{q'}(p',r')$ and $\angle_{r}(p,q) \leq \angle_{r'}(p',q')$.
Hence the
geodesic triangle $\triangle
(p,r,q)$ in $U'$ satisfies the CAT(0) inequality.

\end{proof}

The previous eight lemmas imply the following proposition.

\begin{proposition}\label{3.37}
The subcomplex $K'$ obtained by performing an elementary collapse on a finite, CAT(0) simplicial
$3$-complex $K$ satisfying Property A, is nonpositively curved.
\end{proposition}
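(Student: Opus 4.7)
The plan is to reduce the global assertion that $K'$ is nonpositively curved to the local claim that, at each vertex of the collapsed $3$-simplex $\sigma$, the neighborhood $U'$ constructed in the preliminaries carries a CAT(0) metric. For any point $x \in |K'|$ whose distance from $\sigma$ is bounded away from zero, a sufficiently small ball about $x$ is unchanged by the elementary collapse and so remains CAT(0) because the corresponding ball in $K$ was CAT(0). Thus the only points requiring attention are those lying in the (finitely many) neighborhoods $U'$ centered at the vertices $a,b,c,d$ of $\sigma$.

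For each such $U'$ the strategy is as follows. Lemma \ref{3.17} gives an explicit description of the geodesic segments in $U'$ joining pairs of points whose $U$-segment passes through the interior of $\sigma$: each such geodesic in $U'$ is one of the broken paths $[p,s]\cup[s,q]$ through a single edge, or $[p,t]\cup[t,v]\cup[v,q]$ detouring through two edges; every other geodesic in $U'$ coincides with the corresponding geodesic in $U$. In particular $U'$ is a complete geodesic metric space, and it suffices to verify the CAT(0) inequality for every geodesic triangle $\triangle(p,q,r)\subset U'$. I would organize the verification by counting how many of the three sides of $\triangle(p,q,r)$ are newly broken geodesics: the case of zero broken sides is immediate from the CAT(0) property of $U$; one broken side is handled by Lemmas \ref{3.19} and \ref{3.21} (depending on whether the broken geodesic passes through one edge or two); two broken sides are covered by Lemmas \ref{3.23}, \ref{3.25}, \ref{3.27}, \ref{3.29}; and three broken sides are handled by Lemmas \ref{3.33} and \ref{3.35}. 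Since these cases exhaust every combinatorial possibility for how the three sides can interact with the faces $\tau_1,\tau_2,\tau_3$, every geodesic triangle in $U'$ satisfies the CAT(0) inequality, so $U'$ is a CAT(0) space.

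Combining the two observations, every point of $|K'|$ admits a CAT(0) neighborhood, and therefore $K'$ is nonpositively curved. The main obstacle in carrying out this plan is not a single delicate inequality but rather the bookkeeping: one must be sure that the eight lemmas above, together with the trivial case, really exhaust all possible configurations of $\triangle(p,q,r)$ with respect to $\sigma$ and with respect to which edges among $e_1,e_2,e_3$ the broken geodesics run through. Property A enters here in an essential way, since it is precisely the hypothesis that rules out ambiguity in the broken-geodesic description of Lemma \ref{3.17}; without it the menu of triangle types to be checked would not be well-defined, and the case analysis would break down.
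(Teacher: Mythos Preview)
Your proposal is correct and follows essentially the same route as the paper: reduce to showing $U'$ is CAT(0), dispose of points away from $\sigma$ trivially, and then invoke the eight Lemmas \ref{3.19}--\ref{3.35} together with the case of no broken sides to cover every geodesic triangle in $U'$. One small bookkeeping slip: Lemma~\ref{3.33} treats a triangle with \emph{two} broken sides (both detouring through two edges), not three---only Lemma~\ref{3.35} handles the three-broken-side case---so your grouping should read ``two broken sides: Lemmas \ref{3.23}, \ref{3.25}, \ref{3.27}, \ref{3.29}, \ref{3.33}; three broken sides: Lemma \ref{3.35}.''
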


\begin{proof}\label{3.38}
We must show that every point in $|K'|$ has a neighborhood which is a CAT(0) space.

Let $u,v,w$ be three distinct points in $U$ chosen such that they do
not belong to the interior of $\sigma$ and such that the geodesic segments $[u,v]$,
$[u,w]$ and $[v,w]$ in $U$ do not intersect the interior of $\sigma$. Note that the
geodesic triangle $\triangle (u,v,w)$ in $U'$ satisfies the CAT(0)
inequality. Hence, the Lemmas \ref{3.19}, \ref{3.21}, \ref{3.23}, \ref{3.25}, \ref{3.27},
\ref{3.29}, \ref{3.33} and
\ref{3.35} guarantee that any geodesic triangle in $U'$
fulfills the CAT(0) inequality. So $U'$ is a CAT(0) space.

Let $y$ be a point in $|K|$ that does not belong to the interior of $\sigma$. Let
$U_{y}$ be a neighborhood of $y$ homeomorphic to a closed ball of
radius $r_{y}$, $U_{y} = \{x \in |K| \mid d(y,x) \leq r_{y}\}$. The
radius $r_{y}$ is chosen small enough such that $U_{y}$ does not
intersect $\sigma$. For any $y$ in $|K'|$ that does not belong to
$\tau_{1}, \tau_{2}$ or $\tau_{3}$, we consider a neighborhood $U_{y}'$ that
coincides with $U_{y}$. $U'_{y}$ is thus a CAT(0) space.

So, because every point in $|K'|$ has a neighborhood which is a CAT(0) space, $|K'|$ is
nonpositively curved.

\end{proof}

The main result of the paper is an immediate consequence of the above proposition.

\begin{corollaryB}\label{3.39}
Any finite, CAT(0) simplicial $3$-complex $K$ that fulfills Property A, collapses to a point through CAT(0) subspaces.
\end{corollaryB}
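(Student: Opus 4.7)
The plan is to deduce Corollary B by iterating the construction behind Proposition 3.37 and invoking White's result at each step. The proof will be a straightforward induction on the number of simplices of $K$, using Proposition 3.37 as the crucial engine that guarantees the collapsed subcomplex remains CAT(0).

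First I would observe that, since $K$ has a CAT(0) metric, its metric is strongly convex, so White's theorem (as quoted in the introduction and in the beginning of Section $3$) provides a $3$-simplex $\sigma$ of $K$ with a free $k$-dimensional face $\alpha$, where $k \in \{1,2\}$. This allows us to perform an elementary collapse and form $K' = K \setminus \{\sigma, \alpha\}$, with induced metric $d'$. By Proposition \ref{3.37}, $|K'|$ endowed with $d'$ is nonpositively curved. To upgrade this to the CAT(0) property, I would argue as follows: an elementary collapse is a deformation retract, so $|K'|$ is homotopy equivalent to $|K|$; because $|K|$ is CAT(0) it is contractible, hence so is $|K'|$, and in particular $|K'|$ is simply connected. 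Since $|K'|$ is also complete (being a closed subspace of the compact complex $|K|$) and nonpositively curved, the standard Cartan--Hadamard statement recalled in the preliminaries (\cite{bridson_1999}, chapter II.$4$) shows that $|K'|$ is a CAT(0) space.

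Next I would verify that $K'$ still satisfies Property A. This is the remark made just before Proposition \ref{3.37}: Property A is a local condition about geodesics near tetrahedra with free faces, and none of the $3$-simplices or free faces surviving in $K'$ acquire new pathological geodesic configurations because the new geodesics in neighborhoods of the collapsed $\sigma$ have already been described (via Lemma \ref{3.17}) and avoid the excluded behavior. Hence $K'$ is again a finite CAT(0) simplicial $3$-complex satisfying Property A, with strictly fewer simplices than $K$.

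I would then set up the induction on the number of simplices. The base case is a single point (or, more conveniently, a single simplex), which is trivially CAT(0) and trivially collapses to a point. For the inductive step, having just produced $K'$ from $K$ by one elementary collapse, one applies the inductive hypothesis to $K'$ to obtain a simplicial collapse $K' \searrow \{\mathrm{pt}\}$ through CAT(0) subspaces; concatenating this with the collapse $K \searrow K'$ yields a simplicial collapse $K \searrow \{\mathrm{pt}\}$ through CAT(0) subspaces. The induction terminates because $K$ has finitely many simplices and each elementary collapse strictly reduces that number. The main obstacle in this argument is not the combinatorial induction itself but the persistence of Property A under collapse; however, as noted above this is built into the setup of Section $3$, since the lemmas culminating in Proposition \ref{3.37} already describe the new local geodesic structure in $U'$ and show that it does not create the configurations forbidden by Property A.
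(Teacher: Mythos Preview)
Your approach mirrors the paper's: iterate White's theorem plus Proposition~\ref{3.37}, and use simple connectivity together with Cartan--Hadamard to recover the global CAT(0) property at each stage. There is, however, a gap in your induction. Your inductive step asserts that $K'$ is ``again a finite CAT(0) simplicial $3$-complex,'' but this fails once the last $3$-simplex has been collapsed: at that point $K'$ has dimension at most $2$, White's theorem (as invoked here) no longer produces a $3$-simplex with a free face, and Proposition~\ref{3.37}---which is specifically about removing a $3$-simplex---does not apply. Your base case (a single simplex) is therefore unreachable from the $3$-dimensional regime by the step you describe.

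The paper handles this explicitly: after finitely many collapses one arrives at a $2$-dimensional CAT(0) spine $L$, and then invokes the separate $2$-dimensional result (\cite{lazar_2010_8}, Theorem~3.1.10) to collapse $L$ to a point through CAT(0) subspaces; it also remarks that Property~A concerns only $3$-simplices, so no extra hypothesis is needed on $L$. You should insert this same step, or equivalently broaden your inductive hypothesis to complexes of dimension at most $3$ and supply the (easier, already-known) argument in dimensions $\le 2$.
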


\begin{proof}\label{3.40}

Because $K$ has a strongly convex metric, it has, by \cite{white_1967}, a $3$-simplex $\sigma$
with a free $2$-dimensional ($1$-dimensional) face. We fix a point $y$ in the interior
of a $3$-simplex of $K$. We define the mapping $R : |K| \times [0,1]
\rightarrow |K|$ which associates for any $x \in |K|$ and for any $t
\in [0,1]$, to $(x,t)$ the point a distance $t \cdot d(y,x)$ from
$x$ along the geodesic segment $[y,x]$. We note that $R$ is a continuous retraction of $|K|$ to
$y$. $R(|K| \times [0,1])$ is therefore contractible and then simply
connected. Let $m,n,s,t$ be the vertices of a tetrahedron $\delta$ in $R(|K| \times
[0,1])$ such that the segment $[m,n]$ either belongs to a
$1$-simplex ($2$-simplex) that is the face of a single $3$-simplex in the complex or it is itself
the $1$-dimensional face of a single $3$-simplex in the complex. For each tetrahedron $\delta$, we
deformation retract $R(|K| \times [0,1])$ by pushing in $\delta$ starting at $[m,n]$. We obtain
each time a subspace $|K'| = R(|K| \times [0,1])$ which remains
simply connected and, by Proposition \ref{3.37}, nonpositively
curved. So $|K'|$ is a CAT(0) space. Any two points in
$|K'|$ are therefore joined by a unique geodesic segment in $|K'|$. If at a
certain step we delete the point $y$, we fix another point in the
interior of a $3$-simplex of $K'$, define the mapping $R$ as before and
retract $|K'|$ by CAT(0) subspaces further. Because $K$ is finite
we reach, after a finite number of steps, a $2$-dimensional spine
$L$ which is also a CAT(0) space. So, by \cite{lazar_2010_8} (Theorem $3.1.10$), $L$ can be collapsed
further through CAT(0) subspaces to a point. Note that Property A refers only to segments intersecting $3$-simplices in $U$.
The $2$-dimensional spine $L$ does therefore no longer fulfill Property A. Hence we may indeed apply \cite{lazar_2010_8}, Theorem $3.1.10$.

\end{proof}

\begin{bibdiv}
\begin{biblist}

\bib{alex_1955}{article}{
   author={Aleksandrov, A.-D.},
   title={Die innere Geometrie der konvexen Flaechen},
   journal={Akademie Verlag, Berlin},
   date={1955},
%   review={\MR{3062742}},
%   doi={10.1016/j.aim.2013.04.009},
}

\bib{aleksandrov_1986}{article}{
   author={Aleksandrov, A.-D.},
   author={Berestovski, V.-N.},
   author={Nikolaev, I.-G.},
   title={Generalized Riemannian spaces},
   journal={Russ. Math. Surveys},
   volume={41},
   date={1986},
   pages={3--44},
}

\bib{baralic_2014}{article}{
   author={Baralic, Dj.},
   author={Laz\u{a}r, I.-C.},
   title={A note on the combinatorial structure of finite and locally finite simplicial complexes of nonpositive curvature},
   journal={arxiv.org/pdf/$1403.4547$v$1$},
   date={2014},
%   review={\MR{3280043}},
%   doi={10.1090/S0002-9947-2014-06137-0},
}

\bib{benedetti_2012}{article}{
   author={Adiprasito, K. A.},
   author={Benedetti, B.},
   title={Metric Geometry, Convexity and Collapsibility},
   journal={arXiv:$1107.5789$v$4$},
   date={2013},
%   review={\MR{3280043}},
%   doi={10.1090/S0002-9947-2014-06137-0},
}

\bib{bridson_1999}{article}{
   author={Bridson, M.},
   author={Haefliger, A.},
   title={Metric spaces of non-positive curvature},
   journal={Springer, New York},
   date={1999},
%   review={\MR{3280043}},
%   doi={10.1090/S0002-9947-2014-06137-0},
}

\bib{burago_1992}{article}{
   author={Burago, Y.},
   author={Gromov, M.},
   author={Perelman, G.},
   title={Alexandrov spaces with curvature bounded below},
   journal={Russ. Math. Surveys},
   volume={47},
   date={1992},
   pages={1--58},
%   review={\MR{3215372}},
%   doi={10.4064/cm135-1-8},
}

\bib{burago_2001}{article}{
   author={Burago, D.},
    author={Burago, Y.},
   author={Ivanov, S.},
   title={A Course in Metric Geometry},
   journal={American Mathematical Society, Providence, Rhode Island},
   date={2001},
%   review={\MR{3215372}},
%   doi={10.4064/cm135-1-8},
}

\bib{chepoi_2009}{article}{
   author={Chepoi, V.},
   author={Osajda, D.},
   title={Dismantlability of weakly systolic complexes and applications},
   journal={Trans. Amer. Math. Soc.},
   number={367},
   volume={2},
   date={2015},
   pages={1247--1272},
%   review={\MR{3215372}},
%   doi={10.4064/cm135-1-8},
}

\bib{crowley_2008}{article}{
   author={Crowley, K.},
   title={Discrete Morse theory and the geometry of nonpositively curved simplicial complexes},
   journal={Geometriae Dedicata},
   date={2008},
   pages={35--50},
%   review={\MR{3215372}},
%   doi={10.4064/cm135-1-8},
}

\bib{davis_2008}{article}{
   author={Davis, M.},
   title={The geometry and topology of Coxeter groups},
   journal={London Mathematical Society Monographs Series, Princeton University Press, Princeton, NJ},
    volume={32},
   date={2008},
%   review={\MR{3215372}},
%   doi={10.4064/cm135-1-8},
}

\bib{forman_1998}{article}{
   author={Forman, R.},
   title={Morse theory for cell complexes},
   journal={Adv. Math.},
   number={134},
   volume={1},
   date={1998},
   pages={90--145},
%   review={\MR{2264834 (2007j:53044)}},
%   doi={10.1007/s10240-006-0038-5},
}

\bib{Gro}{article}{
   author={Gromov, M.},
   title={Hyperbolic groups},
   conference={
      title={Essays in group theory},
   },
   book={
      series={Math. Sci. Res. Inst. Publ.},
      volume={8},
      publisher={Springer, New York},
   },
   date={1987},
   pages={75--263},
%   review={\MR{919829 (89e:20070)}},
%   doi={10.1007/978-1-4613-9586-7\_3},
}

\bib{JS1}{article}{
   author={Januszkiewicz, T.},
   author={{\'S}wi{\c{a}}tkowski, J.},
   title={Simplicial nonpositive curvature},
   journal={Publ. Math. Inst. Hautes \'Etudes Sci.},
   number={104},
   date={2006},
   pages={1--85},
   issn={0073-8301},
%   review={\MR{2264834 (2007j:53044)}},
%   doi={10.1007/s10240-006-0038-5},
}

\bib{lazar_2009_2}{article}{
    title     ={CAT(0) simplicial complexes of dimension $2$ are collapsible},
    author    ={Laz\u{a}r, I.-C.},
    conference={
      title={Proceedings of the International Conference on Theory and Applications of Mathematics and
Informatics (Eds.: D. Breaz, N. Breaz, D. Wainberg), Alba Iulia, September, $9-11$, $2009$},
   },
   date={2009},
   pages={507--530},

}

\bib{lazar_2010_8}{article}{
    title     ={The study of simplicial complexes of nonpositive curvature},
    author    ={Laz\u{a}r, I.-C.},
    journal   ={Cluj University Press},
    date      ={2010},
    eprint    ={http://www.ioana-lazar.ro/phd.html}
}

\bib{white_1967}{article}{
   author={White, W.},
   title={A $2$-complex is collapsible if and only if it admits a strongly convex metric},
   journal={Fund. Math.},
   volume={68},
   date={1970},
   pages={23--29},
%   review={\MR{1998394 (2004h:20058)}},
%   doi={10.1007/s00014-003-0763-z},
}

\end{biblist}
\end{bibdiv}

\end{document}